\numberwithin{equation}{section}
\newcommand{\e}{\varepsilon}
\newcommand{\Pb}{\mathbb{P}}
\newcommand{\E}{\mathbb{E}}
\newcommand{\R}{\mathbb{R}}
\newcommand{\N}{\mathbb{N}}
\newcommand{\Z}{\mathbb{Z}}
\newcommand{\eps}{\varepsilon}
\newcommand{\Or}{{\cal O}}
\newtheorem{prop}{Proposition}[section]
\newtheorem{assumption}[prop]{Assumption}
\newtheorem{thm}[prop]{Theorem}
\newtheorem{lem}[prop]{Lemma}
\newtheorem{cor}[prop]{Corollary}
\newtheorem{cla}[prop]{Claim}
\newtheorem{rem}[prop]{Remark}
\newenvironment{remark}{\begin{rem}\normalfont}{\end{rem}}
\title{Tagged particle fluctuations for TASEP with dynamics restricted by a moving wall}
\author{Patrik L.\ Ferrari\thanks{Institute for Applied Mathematics, Bonn University, Endenicher Allee 60, 53115 Bonn, Germany. E-mail: {\tt ferrari@uni-bonn.de}} \and
Sabrina Gernholt\thanks{Institute for Applied Mathematics, Bonn University, Endenicher Allee 60, 53115 Bonn, Germany. E-mail: {\tt sgernhol@uni-bonn.de}}
}
\date{30th August 2025}
\begin{document}
\maketitle
\sloppy

\begin{abstract}
We consider the totally asymmetric simple exclusion process on $\Z$ with step initial condition and with the presence of a rightward-moving wall that prevents the particles from jumping. This model was first studied in~\cite{BBF21}. We extend their work by determining the limiting distribution of a tagged particle in the case where the wall has influence on its fluctuations in neighbourhoods of multiple macroscopic times.
\end{abstract}

\section{Introduction}
The totally asymmetric simple exclusion process (TASEP) is one of the most studied interacting particle systems in one spatial dimension. It was introduced into mathematics by Spitzer in~\cite{Spi70}. The process consists of particles on $\Z$, with each particle moving to the right by one step after a waiting time described by an exponential clock with mean one. This clock starts counting from the moment the right-neighbouring site becomes empty, and all clocks are independent random variables. The model can also be viewed as an evolution of an interface and, as such, belongs to the Kardar-Parisi-Zhang universality class~\cite{KPZ86}.

Although being a simple model, due to the interaction between particles, the large time fluctuations of particle positions or integrated current are very non-trivial. In this paper, we consider the so-called step initial condition, that is, initially sites $\ldots,-3,-2,-1,0$ are occupied by particles while $1,2,\ldots$ are empty. Moreover, the jumps of the rightmost particle are suppressed whenever its position is to the right of a given increasing function $t\mapsto f(t)$. We can think of it as a barrier or a wall, which moves to the right and forces the system of particles to stay behind it. This setting was first studied by Borodin-Bufetov-Ferrari in~\cite{BBF21}.

If $f(t)\gg t$, the effect of the wall is negligible, since the rightmost particle without the wall moves with unit speed. In this case, a tagged particle in the bulk of the system exhibits asymptotic fluctuations identical to those in the system without the wall, namely fluctuations distributed according to the GUE Tracy-Widom distribution (Johansson in~\cite{Jo00b}). On the other hand, if $f(t)\ll t$, for instance if $f(t)=v t$ with $v\in (0,1)$, the influence of the wall is relevant: in the framework of \cite{BBF21}, the particle density has a rarefaction fan which goes over into a constant density profile until reaching the position of the wall. From the KPZ theory, one expects to still see the GUE Tracy-Widom law on the rarefaction fan, while in the constant density region one anticipates the GOE Tracy-Widom distribution as obtained for TASEP with constant non-random density as initial condition (see Sasamoto~\cite{Sas05}, Borodin-Ferrari-Pr\"ahofer~\cite{BFP06} and Ferrari-Occelli~\cite{FO17} for generic density). In the transition region, the particle fluctuations should be those of the Airy$_{2\to 1}$ process (see Borodin-Ferrari-Sasamoto~\cite{BFS07}). These predictions actually come true as proven in Proposition~4.8 of~\cite{BBF21}.

More generally, in~\cite{BBF21}, a non-decreasing, not necessarily linear, function $f(t)$ is considered, such that the influence of the wall on the fluctuations of a tagged particle is restricted to one macroscopic time region. As a result, the limiting distribution of the particle is determined in the form of a variational formula (see Theorem~4.4 of~\cite{BBF21}),
that also gives the one-point distribution of the KPZ fixed point for a generic initial condition. This was achieved by the solution of TASEP in the KPZ fixed point paper by Matetski-Quastel-Remenik~\cite{MQR17} (see also~\cite{Jo03,QR13,QR16,BL13,CLW16,CFS16,FO18} for other instances of such variational formulas).

In this paper, we consider the case where the influence of the wall is not restricted to a single macroscopic time region but occurs over multiple times. In our main result, Theorem~\ref{thm_main_result}, we prove that when the influence of the wall occurs in two distinct macroscopic time regions, then the asymptotic fluctuations of the tagged particle are distributed according to the product of two distribution functions, each given by a variational formula. The extension to multiple times is straightforward as mentioned in Remark~\ref{remark_several_time_regions_or_removed_wall}.

Heuristically, the product structure of the limit distribution can be understood as follows. For a tagged particle to lie to the right of a given position $s$ at time $t$, all particles to its right must reach positions beyond $s$. The presence of a wall influence means that, within a time interval, these particles experience significant slowdowns, which lowers the probability of reaching the desired positions. Therefore, in the scaling limit, the distribution of the tagged particle is shaped by the dynamics of the process in the regions of wall influences. For different macroscopic times, these dynamics are asymptotically independent, leading to the observed decoupling.
Thus the product form of the limiting distribution is due to the fact that the fluctuation of the (rescaled) tagged particle is asymptotically the maximum of two independent random variables. In the context of TASEP, this form of distribution occurred previously in the presence of shocks in a series of papers by Ferrari and Nejjar~\cite{FN13, FN16, N17, Fer18, FN19,FN24,Nej21}, see also Quastel-Remenik~\cite{QR18} and Bufetov-Ferrari~\cite{BF22}.

In our situation, the product form stems from the circumstance that the values of the auxiliary tagged particle process without wall in the starting formula (see Proposition~\ref{prop_3.1_of_BBF22}) are asymptotically independent for times at macroscopic distance. This follows from the fact that the fluctuations coming from mesoscopic times close to time zero are asymptotically irrelevant. The study of shocks relies on related physical ideas. In that case, the fluctuations acquired in any mesoscopic time close to the end time are asymptotically irrelevant.

In this paper, we use several methods. The starting formula (Proposition~\ref{prop_3.1_of_BBF22}) expressing the distribution of a tagged particle with wall constraint as a function of a tagged particle process without wall was proven in Proposition~3.1 of~\cite{BBF21} using colour-position symmetry from Borodin-Bufetov~\cite{BB19} (see also~\cite{AAV11,AHR09,Buf20,Gal20} for related works). A second key ingredient is functional slow decorrelation (see~\cite{CLW16} for the last passage percolation (LPP) setting), for which we present a proof based on a criterion by~\cite{Bil68}. As input we need tightness of the particle process, which is derived using comparison inequalities (in the spirit of the Cator-Pimentel approach for LPP~\cite{CP15b}). One problem is that these inequalities are not satisfied under the basic coupling. For that reason, we introduce a new coupling called ``clock coupling'' under which they hold, see Section~\ref{sectCoupling}. Finally, we need to prove some localization results for the backwards paths introduced in~\cite{Fer18}. They are utilized to obtain independence of tagged particle positions from the fact that they effectively depend on the randomness in distinct neighbourhoods of backwards paths, see Corollary~\ref{CorIndependence} for an explicit statement. Our approach for the localization of backwards paths substantially differs from other approaches in the particle representation \cite{Fer18,FN19}. We employ the strategy going back to Basu-Sidoravicius-Sly~\cite{BSS14} (formulated for LPP) of using mid-time estimates and iterating (see \cite{BF22} in the context of TASEP height functions).
However, applying this strategy using the particle representation of TASEP requires extra arguments (Lemma~\ref{lemma_prob_1_ordering_backwards_paths} and Lemma~\ref{lemma_alwaysstepparticle}), and the approach only provides control of fluctuations to the right. Instead of controlling the fluctuations to the left directly, we employ a particle-hole duality, see Section~\ref{sect4.3}. Such localization issues were not present for the LPP and in the TASEP height function representation~\cite{BSS14,BF22,BF20}, where the treatment of right and left fluctuations was the same. Still, with the starting formula being given in terms of tagged particles, we are able to stay in this setting by the mentioned additional arguments, without employing extra mappings to other representations of the model. Finally, we wish to emphasize that our localization strategy relies on both basic coupling results and comparison inequalities under clock coupling.

\paragraph*{Outline} In Section~\ref{sectResults} we introduce the model and state the main results, with two explicit examples. In Section~\ref{sectCoupling} we discuss properties of the backwards paths, the new coupling needed for comparison inequalities and the main asymptotic results obtained from them. Section~\ref{sectLocal} contains the localization results and is followed by the functional slow decorrelation result in Section~\ref{SectSlowDec}. Finally, in Section~\ref{section_proof_of_the_main_results} we prove the main results. We collect some well-known results on TASEP asymptotics and bounds in the Appendix.

\paragraph{Acknowledgements:} The work was partly funded by the Deutsche Forschungsgemeinschaft (DFG, German Research Foundation) under Germany’s Excellence Strategy - GZ 2047/1, projekt-id 390685813 and by the Deutsche Forschungsgemeinschaft (DFG, German Research Foundation) - Projektnummer 211504053 - SFB 1060.

\newpage
\section{Model and main results}\label{sectResults}
We consider the totally asymmetric simple exclusion process (TASEP) on $\Z$. It is an interacting particle system that maintains the order of particles. We denote the position of the particle with label $n$ at time $t$ by $x_n(t)\in\Z$ and use the right-to-left order, namely $x_{n+1}(t)<x_n(t)$ for all $n$ and $t$. Particles try to jump to their right-neighbouring site at rate $1$, provided that the arrival site is empty.

In this paper, we consider TASEP with step initial condition, that is, $x_n(0)=-n+1$ for $n=1,2,\ldots$. Furthermore, we impose the constraint that all jump trials to the right of a deterministic barrier $t\mapsto f(t)$ are suppressed. We assume that $f$ is a non-decreasing function with $f(0)\geq 0$. In other words, the barrier acts as a moving wall blocking the particles to its left. We denote by $x^f(t)$ the TASEP with this barrier condition and, by construction, we have $x_1^f(t)\leq f(t)$ for all times $t\geq 0$. By $x(t)$ we denote another TASEP with step initial condition but without wall constraint.

The starting point is a finite time result relating the distribution of $x_n^f(T)$, for $T \geq 0$, to the process $(x_n(t),t \in [0,T])$ without wall constraint. It is derived by coupling both processes to multi-species TASEPs and applying the colour-position symmetry from~\cite{BB19} (see also~\cite{AAV11,AHR09,Buf20,Gal20} for related works).
\begin{prop}[Proposition 3.1 of \cite{BBF21}] \label{prop_3.1_of_BBF22}
	Let $f$ be a non-decreasing function on $\R_{\geq 0}$ with $f(0) \geq 0$.
	 Then, for any $n \in \N$ and $S \in \R$, it holds
	\begin{equation}\label{eq1.2}
	\Pb( x_n^f(T) > S) = \Pb( x_n(t) > S - f(T-t) \textrm{ for all } t \in [0,T], x_n(T) > S).
	\end{equation}
\end{prop}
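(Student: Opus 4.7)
The plan is to follow the strategy that the authors attribute to \cite{BBF21}: couple the wall-constrained TASEP $x^f$ and the unconstrained TASEP $x$ to a common \emph{multi-species} TASEP in which the wall is realised as an auxiliary species, and then apply the colour-position symmetry of Borodin-Bufetov \cite{BB19}. The content of this symmetry is that, for multi-species TASEP started from ``one particle of each colour per site on $\Z$'', the joint law of the positions of given colours at time $T$ coincides, after a natural reflection, with the joint law of the colours seen at given positions at time $T$. This converts a single-time question about a labelled particle's position into a trajectorial question about the unconstrained process $(x_n(t),\,t\in[0,T])$.

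Concretely, I would first build a multi-species TASEP on $\Z$ in which the first-class particles occupy the step initial condition $\ldots,-2,-1,0$ and an auxiliary higher-colour species is placed so as to enforce exactly the barrier constraint $x_1(t)\leq f(t)$. In this coupling, $x_n^f(T)$ is recovered as the position of the $n$-th first-class particle at time $T$, while the unconstrained process $x_n$ arises from the same initial data when the wall species is dropped. The non-trivial part of the setup is choosing this auxiliary species so that its effect reproduces the deterministic moving barrier $t\mapsto f(t)$ while preserving the multi-species rules to which colour-position symmetry applies.

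Applying that symmetry to the event $\{x_n^f(T)>S\}$ should rewrite the left-hand side of \eqref{eq1.2} as a trajectorial event for the unconstrained process $x$: the particle $x_n$ must satisfy $x_n(T)>S$ at the terminal time, and throughout $[0,T]$ it must have stayed to the right of the reflected image of the wall. Reading the wall's trajectory backwards in time under the symmetry, the curve $t\mapsto f(t)$ is mapped to $t\mapsto S-f(T-t)$, giving the constraint $x_n(t)>S-f(T-t)$ for all $t\in[0,T]$ and hence the right-hand side of \eqref{eq1.2}.

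The main obstacle I anticipate is not the symmetry itself but the careful bookkeeping: one must verify that the deterministic wall $f$ transforms into the backwards curve $S-f(T-t)$ with precisely this shift, and that the end-time boundary term is exactly $x_n(T)>S$. A useful sanity check is the limit $f\equiv +\infty$, where the wall is inactive and both sides should reduce to $\Pb(x_n(T)>S)$; this is consistent because the trajectorial constraint on the right becomes vacuous. Once the coupling and the reflection are set up correctly, the identity should follow in a single application of colour-position symmetry, which is presumably why \cite{BBF21} treat this formula as a starting point rather than as a substantial computation.
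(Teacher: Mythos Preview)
The paper does not give its own proof of this proposition; it quotes the result verbatim as Proposition~3.1 of \cite{BBF21} and only remarks that it ``is derived by coupling both processes to multi-species TASEPs and applying the colour-position symmetry from~\cite{BB19}.'' Your proposal is precisely this strategy, so at the level of approach you are aligned with what the paper (and \cite{BBF21}) do. What you have written is a correct high-level plan rather than a proof: the substantive work---constructing the multi-species coupling so that the wall is encoded by an auxiliary colour, and tracking the reflection under colour-position symmetry to land exactly on $S-f(T-t)$---is left undone, and that is where all the content of \cite{BBF21}'s argument lies. If you were to complete the proof you would need to carry out that bookkeeping explicitly, but since the present paper treats the identity as a black box from \cite{BBF21}, your sketch is at the same level of detail as the paper itself.
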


In~\cite{BBF21}, the case where $x_n(t) > S - f(T-t)$ is non-trivial only for times around one given macroscopic time was analysed. In this paper, we treat the case where there are two time windows in $[0,T]$ where the inequality is non-trivial (the generalization for finitely many times is straightforward). More precisely, we consider the tagged particle with label $n=\alpha T$ at time $T\gg 1$, where $\alpha \in (0,1)$ is fixed. Further, we assume that there are two neighbourhoods of times $\alpha_0 T $ and $\alpha_1 T$ where the influence of the wall is relevant, with\footnote{We can also include $\alpha_1=1$ with the only difference that the supremum in the second term in Theorem~\ref{thm_main_result} is restricted for example to $\R_+$, compare for instance to Theorem~4.7 of~\cite{BBF21}.} $\alpha < \alpha_0 < \alpha_1 < 1$. The lower bound $\alpha T$ is due to the fact that until time $\alpha T$, the $\alpha T$-th particle did not move yet (macroscopically).

This motivates considering two rescalings of the tagged particle process $x_{\alpha T}(t)$ around the times $\alpha_0 T$ and $\alpha_1 T$. For $i \in \{0,1\}$, let
\begin{equation}
\tilde{X}_T^{i}(\tau) := \frac{ x_{\alpha T}(\alpha_i T - \tilde{c}_2^{i} \tau T^{2/3}) - \tilde{\mu}^{i}(\tau,T)}{- \tilde{c}_1^{i} T^{1/3}}, \label{eq_definition_process_tildeX_T}
\end{equation}
where
\begin{equation}
\tilde{c}_1^{i} := \frac{(\sqrt{\alpha_i} - \sqrt{\alpha})^{2/3} \alpha_i^{1/6}}{\alpha^{1/6}}, \ \tilde{c}_2^{i} := \frac{2(\sqrt{\alpha_i} - \sqrt{\alpha})^{1/3} \alpha_i^{5/6}}{\alpha^{1/3}}
\end{equation}
and
\begin{equation}
\tilde{\mu}^{i}(\tau,T) := \sqrt{\alpha_i} ( \sqrt{\alpha_i} - 2 \sqrt{\alpha})T - 2 \tau \frac{(\sqrt{\alpha_i} - \sqrt{\alpha})^{4/3} \alpha_i^{1/3}}{\alpha^{1/3}} T^{2/3}.
\end{equation}
In~\cite{BBF21}, weak convergence of $(\tilde{X}_T^{i}(\tau))$ is stated as follows\footnote{This result can also be obtained by taking the one for last passage percolation, for which tightness follows from the comparison inequalities of~\cite{CP15b}, and then using functional slow decorrelation (see~\cite{CFS16} for the exponential LPP setting, previously~\cite{CLW16} for geometric LPP).}.
\begin{lem}[Corollary 4.1 of \cite{BBF21}] \label{corollary_4.1_of_BBF22_tightness_weak_conv_Xtilde}
	For $T \to \infty$, the process $(\tilde{X}_T^{i}(\tau))$ converges weakly in the space of c\`adl\`ag functions on compact intervals to $(\mathcal{A}_2(\tau) - \tau^2)$, where $\mathcal{A}_2$ denotes an Airy$_2$ process.
\end{lem}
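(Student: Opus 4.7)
The plan is to derive this result from the analogous statement for exponential last passage percolation (LPP) via the standard TASEP-LPP correspondence, following the blueprint sketched in the footnote: tightness from the Cator-Pimentel comparison inequalities~\cite{CP15b}, plus functional slow decorrelation as in~\cite{CLW16,CFS16}.

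First I would rewrite $\tilde{X}_T^i(\tau)$ in terms of exponential LPP. Let $L(n,m)$ denote the last passage time from $(1,1)$ to $(n,m)$ with i.i.d.\ $\mathrm{Exp}(1)$ weights. For step initial data, the basic identity $\{x_n(t)\geq x\}=\{L(n,x+n)\leq t\}$ translates $\tilde{X}_T^i(\tau)$ into a rescaled inverse-LPP quantity: the event $\{\tilde{X}_T^i(\tau)\leq s\}$ becomes an event on LPP times, with the macroscopic point chosen so that the characteristic direction through it passes through the origin at angle determined by $\alpha/\alpha_i$. With the scaling constants $\tilde c_1^i, \tilde c_2^i$ in~(\ref{eq_definition_process_tildeX_T}), the process index $\tau$ sweeps a segment along the characteristic direction in the LPP lattice.

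Next I would establish finite-dimensional convergence. One-point Tracy-Widom GUE convergence of $L$ at the relevant point is Johansson's classical result, and the anti-characteristic process-level statement $(L_{\text{antichar}})\Rightarrow\mathcal{A}_2(\tau)-\tau^2$ (after the standard KPZ rescaling) is by now standard. Since $\tau$ in $\tilde{X}_T^i(\tau)$ moves along the characteristic direction rather than the anti-characteristic, I would invoke functional slow decorrelation to transport the known limit: the LPP value at a point shifted by order $T^{2/3}$ along the characteristic differs from the LPP value at the corresponding anti-characteristic projection by a deterministic shift plus a $o(T^{1/3})$ correction. Inverting via the correspondence (which exchanges the role of time and space, and replaces an upper bound on LPP by a lower bound on position), one reads off convergence of the joint distributions of $\tilde{X}_T^i$ to those of $\mathcal{A}_2(\tau)-\tau^2$.

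For tightness, the Cator-Pimentel comparison inequalities~\cite{CP15b} applied to exponential LPP yield a modulus-of-continuity estimate of the form $\E[|L_{\text{rescaled}}(\tau)-L_{\text{rescaled}}(\tau')|^p]\leq C|\tau-\tau'|^{p/2}$ for $p$ large enough along the anti-characteristic; slow decorrelation then transports this estimate to the characteristic direction, and the monotonicity of the TASEP-LPP inversion transfers it to $\tilde{X}_T^i$. A Kolmogorov-type criterion (or the Billingsley criterion of~\cite{Bil68} for c\`adl\`ag processes) together with the finite-dimensional convergence then yields weak convergence on every compact interval.

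The main obstacle I expect is the careful bookkeeping of the inversion step: one needs to verify that the $T^{2/3}$ horizontal scale and the $T^{1/3}$ vertical scale in~(\ref{eq_definition_process_tildeX_T}) match precisely with what the LPP scaling (plus slow decorrelation shift) produces after inversion, so that the limiting process is genuinely $\mathcal{A}_2(\tau)-\tau^2$ with the correct parabolic correction, and that the Hölder-type modulus survives the inversion without deterioration. The other technically delicate point is controlling the slow-decorrelation error uniformly in $\tau$ over a compact interval, which is precisely the kind of functional statement that Section~\ref{SectSlowDec} will be devoted to.
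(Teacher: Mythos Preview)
Your proposal is correct and follows the LPP route that the paper explicitly mentions in the footnote as a valid alternative. However, this is \emph{not} the route the paper itself emphasizes. The original proof in~\cite{BBF21} works directly in the particle-position representation: tightness is obtained by comparing tagged-particle increments to those of stationary TASEPs with slightly perturbed densities (Proposition~2.2 and Theorem~2.8 of~\cite{BBF21}), and finite-dimensional convergence is read off from known correlation-kernel asymptotics. The present paper points out that the comparison inequality in~\cite{BBF21} was only valid in law under basic coupling, which is not enough for a uniform-in-$\tau$ statement; it then repairs this by introducing the clock coupling of Section~\ref{sectCoupling}, under which the comparison (Lemma~\ref{lemma_comparison_particle_increments} and Proposition~\ref{prop_comparison_increments_of_particles}) holds pathwise. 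With that fix, the~\cite{BBF21} proof goes through unchanged, entirely within the particle picture.

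The trade-off: your LPP approach is conceptually cleaner because the Cator--Pimentel inequalities are already available off the shelf and the inversion step, while requiring careful bookkeeping, is routine. The paper's approach avoids the detour through LPP altogether and keeps everything in the tagged-particle language that the rest of the paper uses; the price is having to develop the clock coupling, but that coupling is independently needed elsewhere (e.g.\ Proposition~\ref{prop_on_particle_distances_stationary} for the localization argument), so it is not wasted effort. One minor caution on your side: transporting the H\"older-type modulus of continuity through the TASEP--LPP inversion and through slow decorrelation simultaneously is more delicate than you suggest; the cleanest way is to first prove weak convergence of the LPP process along the anti-characteristic (finite-dimensional plus tightness), then apply functional slow decorrelation as a weak-convergence statement rather than trying to push moment bounds through.
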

This weak convergence and the resulting tightness of $(\tilde{X}_T^{i}(\tau))$ play a key role in the proof of the main Theorem~\ref{thm_main_result} (see \cite{Bil68} for definitions and properties). We equip the space of c\`adl\`ag functions on compact intervals with the Skorokhod ($J_1$) topology. Considering a fixed interval $[a,b] \subseteq \R$, we denote this space by $\mathbb{D}([a,b])$.

The proof of Lemma~\ref{corollary_4.1_of_BBF22_tightness_weak_conv_Xtilde} used a comparison inequality, Proposition~2.2 of~\cite{BBF21}, which unfortunately contains an inaccuracy (the statement is true only in law, due to the use of the basic coupling). This can be fixed with help of the new coupling we introduce in Section~\ref{sectCoupling}; it allows us to prove the comparison inequality in Lemma~\ref{lemma_comparison_particle_increments}. Since the latter implies the needed asymptotic comparison to stationary TASEP in a uniform sense, as stated in Proposition \ref{prop_comparison_increments_of_particles}, it can be used to amend the proof of the weak convergence, Lemma~\ref{corollary_4.1_of_BBF22_tightness_weak_conv_Xtilde}, directly in the particle positions representation, without passing to LPP models.

In order to observe a non-trivial wall influence during the time interval $[0,T]$ and still see some movement of the particle, we need a scaling
\begin{equation}
x_{\alpha T}^f (T) \simeq \xi T \textrm{ for some } \xi \in (-\alpha, 1-2\sqrt{\alpha}).
\end{equation}
Later on, we suppose $\xi$ to be arbitrary but fixed. We impose the following conditions on the barrier $f$.
\begin{assumption} \label{assumption_for_main_result}
	Let $f$ be a non-decreasing c\`adl\`ag function on $\R_{ \geq 0}$ with $f(0) \geq 0$. We require:
	\begin{itemize}
		\item[(a)] For some fixed $\eps\in (0, \alpha_0 - \alpha)$
		and for $t \in [0,T]$ satisfying $| t- \alpha_0 T | > \varepsilon T$ and $| t - \alpha_1 T | > \varepsilon T$, it holds
\begin{equation}
f(T-t) \geq T f_0 ((T-t)/T) + K(\varepsilon)T,
\end{equation}
where $K(\e ) $ is a positive constant and the function $f_0 : [0,1] \to \R $ is defined by
\begin{equation}
f_0(\beta ) := \begin{cases} \xi - \sqrt{1-\beta}(\sqrt{1-\beta} - 2 \sqrt{\alpha}), \ &\beta \in [0,1-\alpha), \\ \xi + \alpha, \ &\beta \in [1-\alpha,1]. \end{cases} \end{equation}
		\item[(b)] For $i \in \{0,1\}$, parametrize $T-t = (1-\alpha_i) T + \tilde{c}_2^{i} \tau T^{2/3}$ and let
\begin{equation}
f(T-t) = \xi T - \tilde{\mu}^{i} (\tau,T) - \tilde{c}_1^{i} (\tau^2 - g_T^{i}(\tau))T^{1/3}, \ \tau \in \R.
\end{equation}
		The sequences $(g_T^0), (g_T^1)$ converge uniformly on compact sets to piecewise continuous and c\`adl\`ag functions $g_0$ and $g_1$ respectively. Further, there exists a constant $M \in \R $ such that for all $T$ large enough and $i \in \{0,1\}$, it holds
\begin{equation}
g_T^{i}(\tau) \geq - M + \tfrac12 \tau^2 \textrm{ for } | \tau | \leq \e (\tilde{c}_2^{i})^{-1} T^{1/3}.
\end{equation}
\end{itemize}
\end{assumption}
\begin{remark} \label{remark_weaken_assumption}
	Our arguments are still valid if the following reductions are made from our assumptions: \begin{itemize}
		\item Instead of $\alpha < \alpha_0 < \alpha_1$ fixed (independent of $T$), we can assume $\alpha < \alpha_0$ fixed and for some $\sigma>0$, \mbox{$\alpha_1 T - \alpha_0 T \geq T^{2/3+\sigma}$} for all $T$ large enough.
		\item In Assumption~\ref{assumption_for_main_result} (a), it suffices to assume $f(T-t) \geq T f_0((T-t)/T) + K(\eps)T$ for $t \in [\alpha T, (\alpha+\delta)T]$ for some small $\delta > 0$, and to replace $K(\eps)T$ by some term growing faster than $ T^{1/3}$ in the other regions.
	\end{itemize}
\end{remark}
By $g_i$ being piecewise continuous, we mean that each bounded interval in $\R$ can be decomposed into finitely many subintervals on which $g_i$ is continuous. Since $g_i$ is c\`adl\`ag, these intervals are closed at their left edge point.

Consider \eqref{eq1.2} under Assumption~\ref{assumption_for_main_result}, then the scaling $x^f_{\alpha T}(T) \simeq \xi T$ is consistent with the law of large numbers for $x_{\alpha T}(t)$, see Lemma~\ref{Lemma_A.2}. Assumption~\ref{assumption_for_main_result} (a) ensures that a macroscopically visible wall influence on the tagged particle can only happen in neighbourhoods of the times $\alpha_0 T$ and $\alpha_1 T$: away from them, the right hand side in the second probability in \eqref{eq1.2} is macroscopically smaller than the law of large numbers of $x_{\alpha T}(t)$. The sequences $(g_T^i), i \in \{0,1\}$, can be interpreted as $\Or(T^{1/3})-$fluctuations of the rescaled wall position around the law of large numbers near to the times $\alpha_i T, i \in \{0,1\}$.

Our main result is the following theorem.
\begin{thm} \label{thm_main_result}
	For $f$ satisfying Assumption~\ref{assumption_for_main_result} and for each $S \in \R$, it holds
	\begin{equation}
\begin{aligned}
	&\lim_{T \to \infty} \Pb(x_{\alpha T}^f(T) \geq \xi T - S T^{1/3}) \\
	& =  \Pb\Bigl (\ \sup_{ \tau \in \R} \{ \mathcal{A}_2^0 ( \tau) - g_0(\tau) \} \leq S (\tilde{c}_1^0)^{-1} \Bigr) \Pb\Bigl(\ \sup_{ \tau \in \R} \{ \mathcal{A}_2^1 ( \tau) - g_1(\tau) \} \leq S(\tilde{c}_1^1)^{-1} \Bigr), \label{limit_distr_main_result}
\end{aligned}
	\end{equation}
	where $\mathcal{A}_2^0$ and $\mathcal{A}_2^1$ are two independent Airy$_2$ processes.
\end{thm}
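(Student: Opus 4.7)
The plan is to start from the identity in Proposition~\ref{prop_3.1_of_BBF22}, which gives
\begin{equation*}
\Pb(x_{\alpha T}^f(T) \geq \xi T - S T^{1/3})
= \Pb\bigl(x_{\alpha T}(t) \geq \xi T - S T^{1/3} - f(T-t) \text{ for all } t\in[0,T]\bigr),
\end{equation*}
and then to show that, asymptotically, the only constraints that matter are those in the two $\varepsilon T$-neighbourhoods of $\alpha_0 T$ and $\alpha_1 T$. First, I would decompose $[0,T]$ into the ``bulk'' set $B_T=\{t:|t-\alpha_0 T|>\varepsilon T,\,|t-\alpha_1 T|>\varepsilon T\}$ and the two windows $W_T^i=\{t:|t-\alpha_i T|\leq\varepsilon T\}$. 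For $t\in B_T$, Assumption~\ref{assumption_for_main_result}(a) together with the hydrodynamic identity $\xi T - Tf_0((T-t)/T)$ being exactly the LLN of $x_{\alpha T}(t)$ (Lemma~\ref{Lemma_A.2}) reduces the constraint to $x_{\alpha T}(t)>\text{LLN}(t)-K(\varepsilon)T-ST^{1/3}$. Standard one-point tail estimates for $x_{\alpha T}(t)$ in the rarefaction fan and for $t<\alpha T$ (the particle has not moved) imply this bulk constraint is satisfied with probability tending to $1$, uniformly in $t$, so it can be removed in the limit.

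Next, I would rescale inside the windows. Substituting $t=\alpha_i T-\tilde c_2^i\tau T^{2/3}$ and using Assumption~\ref{assumption_for_main_result}(b), the constraint $x_{\alpha T}(t)\geq\xi T-ST^{1/3}-f(T-t)$ becomes
\begin{equation*}
\tilde X_T^i(\tau) + \tau^2 - g_T^i(\tau) \leq S(\tilde c_1^i)^{-1}.
\end{equation*}
So after dropping the bulk contribution, the probability in question equals, up to $o(1)$,
\begin{equation*}
\Pb\Bigl(\sup_{|\tau|\leq\varepsilon(\tilde c_2^0)^{-1}T^{1/3}}\{\tilde X_T^0(\tau)+\tau^2-g_T^0(\tau)\}\leq \tfrac{S}{\tilde c_1^0},\ \sup_{|\tau|\leq\varepsilon(\tilde c_2^1)^{-1}T^{1/3}}\{\tilde X_T^1(\tau)+\tau^2-g_T^1(\tau)\}\leq \tfrac{S}{\tilde c_1^1}\Bigr).
\end{equation*}

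Third, I would pass to the limit in each supremum separately. On any compact window $|\tau|\leq N$, the weak convergence in Lemma~\ref{corollary_4.1_of_BBF22_tightness_weak_conv_Xtilde} together with uniform convergence $g_T^i\to g_i$ on compacts (Assumption~\ref{assumption_for_main_result}(b)) and the continuous mapping theorem for the supremum on $\mathbb{D}([-N,N])$ would give convergence of the truncated sup. To extend to $\R$, I would show the error from restricting to $[-N,N]$ is uniformly small: the lower bound $g_T^i(\tau)\geq -M+\tfrac12\tau^2$ in Assumption~\ref{assumption_for_main_result}(b) plus the standard upper tail $\Pb(\tilde X_T^i(\tau)+\tau^2\geq u)\leq Ce^{-cu^{3/2}}$ (proved via the tightness input of Proposition~\ref{prop_comparison_increments_of_particles}) controls the sup outside $[-N,N]$ by something $O(e^{-cN^3})$, uniformly in $T$, and similarly for the Airy$_2$ limit.

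The main obstacle -- and the content of the product form -- is the asymptotic independence of $\tilde X_T^0$ and $\tilde X_T^1$ on compact intervals of $\tau$. For this, I would use the backwards-path localization from Section~\ref{sectLocal} to argue that the restrictions of $\tilde X_T^i$ to $|\tau|\leq N$ are, with probability tending to $1$, measurable with respect to disjoint subsets of the clock data; combined with functional slow decorrelation (Section~\ref{SectSlowDec}) that allows one to replace $\tilde X_T^i$ by processes anchored near the characteristic lines hitting $(\alpha_i T,x_{\alpha T}(\alpha_i T))$, this yields the required joint convergence to $(\mathcal{A}_2^0,\mathcal{A}_2^1)$ independent. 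Once independence is established, the limit factorizes as the product of two one-dimensional variational distributions, giving exactly the right-hand side of \eqref{limit_distr_main_result}.
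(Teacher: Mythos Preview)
Your plan is correct and follows essentially the same architecture as the paper's proof: start from Proposition~\ref{prop_3.1_of_BBF22}, discard the bulk contribution via Assumption~\ref{assumption_for_main_result}(a), rewrite the two windows as constraints on $\tilde X_T^i(\tau)+\tau^2-g_T^i(\tau)$, pass to the limit on compacts using Lemma~\ref{corollary_4.1_of_BBF22_tightness_weak_conv_Xtilde} and then to $\R$ using the quadratic lower bound on $g_T^i$, and obtain the product form from asymptotic independence via slow decorrelation plus backwards-path localization (Proposition~\ref{prop_on_asymptotic_independence_main_result_chapter4}). One small caveat: your tail control outside $[-N,N]$ is more delicate than a one-point estimate suggests---the paper (Lemma~\ref{proof_claim_in_pf_of_main_thm}, deferring to~\cite{BBF21}) splits the region $N\leq|\tau|\leq\varepsilon(\tilde c_2^i)^{-1}T^{1/3}$ further and uses the comparison to stationary TASEP from Proposition~\ref{prop_comparison_increments_of_particles} for uniform-in-$\tau$ control, not just Lemma~\ref{Lemma_A.2}.
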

\begin{remark} \label{remark_several_time_regions_or_removed_wall}
The proof of Theorem~\ref{thm_main_result} can directly be extended to the case of $n+1$ time windows of possible wall influences, for any $n \in \N$. In this case, one fixes $0 < \alpha < \alpha_0 < \dots < \alpha_n < 1$ and modifies Assumption~\ref{assumption_for_main_result} as follows: the inequality in (a) needs to hold for $t \in [0,T]$ with $|t - \alpha_i T | > \eps T$ for $i = 0, \dots, n $. In part (b), one considers $n+1$ parametrizations of $f(T-t)$ and obtains sequences $(g_T^{i}), i \in \{0, \dots, n \}$ with the same properties as before. The limit distribution becomes
\begin{equation}
\lim_{T \to \infty} \Pb( x_{\alpha T}^f(T) \geq \xi T - S T^{1/3}) = \prod_{i=0}^n \Pb\Bigl( \sup_{ \tau \in \R} \{ \mathcal{A}_2^{i}(\tau) - g_i(\tau)\} \leq S (\tilde{c}_1^{i})^{-1}\Bigr)
\end{equation}
for Airy$_2$ processes $\mathcal{A}_2^{0}, \dots, \mathcal{A}_2^{n}$ and $S \in \R$.

Secondly, it is possible to remove the wall constraint in some regions by allowing the limit functions $g_i$ to equal infinity in some time interval. For $I_i \subseteq \R$ denoting the region where $g_i \equiv \infty$, the limit distribution is then given by
\begin{equation}
\Pb\Bigl( \sup_{ \tau \in \R \setminus I_0 } \{ \mathcal{A}_2^0(\tau) - g_0(\tau)\} \leq S (\tilde{c}_1^{0})^{-1}\Bigr) \Pb\Bigl( \sup_{ \tau \in \R \setminus I_1 } \{ \mathcal{A}_2^1(\tau) - g_1(\tau)\} \leq S (\tilde{c}_1^{1})^{-1}\Bigr).
\end{equation}
\end{remark}

\paragraph*{Two examples} Finally, we examine two concrete examples.
\begin{itemize}
\item[(1)] Consider the piecewise linear function $f$, see Figure~\ref{figure_example_wall_fct}, given by
	\begin{equation}\label{eq1.13}
	f(t) = \begin{cases} c_1 T + v_1 t & \textrm{ if } t \in [0, \tfrac{2-\alpha_0-\alpha_1}{2} T), \\
	c_0 T + v_0 t & \textrm{ if } t \in [ \tfrac{2-\alpha_0-\alpha_1}{2} T, T], \end{cases}
	\end{equation}
	where we set $c_i = \xi - \Bigl ( 1 - \sqrt{ \tfrac{\alpha}{\alpha_i}} - \sqrt{\alpha \alpha_i} \Bigr )$ and $v_i =  1 - \sqrt{ \tfrac{\alpha}{\alpha_i}}  \in (0,1)$, for $i \in \{0,1\}$. We fix $\xi \in (-\alpha, 1-2\sqrt{\alpha})$ such that $c_i \geq 0, i \in \{0,1\}$. The function $f$ jumps to a higher value at the point $\tfrac{2-\alpha_0-\alpha_1}{2} T$. Consequently, $f$ takes non-negative values, is c\`adl\`ag and non-decreasing.

\begin{figure}[t!]
	\centering
	\includegraphics[scale=0.8]{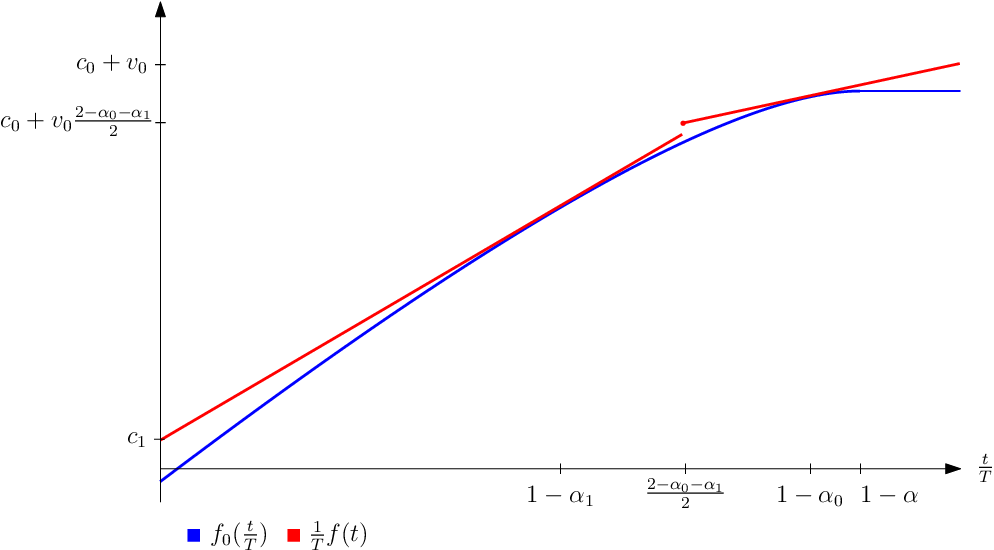}
	\caption{The function $f$ in the first example, as defined in~\eqref{eq1.13}.}
	\label{figure_example_wall_fct}
\end{figure}
A computation shows that $f$ fulfils Assumption~\ref{assumption_for_main_result} with $g_i(\tau) = \tau^2, i \in \{0,1\}$.
	Applying Theorem~\ref{thm_main_result} and recalling the variational formula by Johansson, Corollary 1.3 of~\cite{Jo03b}, we establish
	\begin{equation}
\begin{aligned}
	& \lim_{ T \to \infty} \Pb( x^f_{\alpha T}(T) \geq \xi T - S T^{1/3}) \\
	&=  \Pb \Bigl ( \sup_{\tau \in \R} \{\mathcal{A}_2^{0} (\tau) - \tau^2\} \leq S (\tilde{c}_1^{0})^{-1} \Bigr) \Pb \Bigl( \sup_{\tau\in\R} \{\mathcal{A}_2^{1} (\tau) - \tau^2\} \leq S (\tilde{c}_1^{1})^{-1} \Bigr) \\
	&=  F_{1}( 2^{2/3} S (\tilde{c}_1^{0})^{-1}) F_{1}( 2^{2/3} S (\tilde{c}_1^{1})^{-1}).
\end{aligned}
	\end{equation}
Here, $F_{1}$ denotes the GOE Tracy-Widom distribution function.

\item[(2)] Consider the function
 \begin{equation}
	f(t) = \begin{cases} c_1 T + v_1 t & \textrm{ if } t \in [0, \tfrac{2-\alpha_0-\alpha_1}{2} T), \\
	c_0 T + v_0 t & \textrm{ if } t \in [ \tfrac{2-\alpha_0-\alpha_1}{2} T, (1-\alpha_0)T], \\ \infty & \textrm{ if } t \in ((1-\alpha_0)T, T]. \end{cases}
	\end{equation}
	Then, we have\footnote{For $t > (1-\alpha_0)T$, the wall constraint is removed because $f(t) = \infty$. Therefore, the convergence in Assumption~\ref{assumption_for_main_result} (b) for $i=0$ is restricted to $\tau \in \R_-$.}  $
	g_1(\tau) = \tau^2$ and $
	g_0(\tau) = \begin{cases} \tau^2, \ &\tau \in \R_-, \\ \infty, \ &\tau \in \R_+. \end{cases}
	$
	
	As pointed out in Remark~\ref{remark_several_time_regions_or_removed_wall}, the limit distribution becomes
\begin{equation}
\begin{aligned}
	 & \Pb\Bigl( \sup_{ \tau \in \R_-} \{\mathcal{A}_2^0(\tau)-\tau^2\} \leq S (\tilde{c}_1^{0})^{-1}\Bigr) \Pb\Bigl( \sup_{ \tau \in \R} \{\mathcal{A}_2^1(\tau) - \tau^2\} \leq S (\tilde{c}_1^{1})^{-1}\Bigr) \\
	& = F_{2 \to 1;0}( S (\tilde{c}_1^{0})^{-1} ) F_{1}(2^{2/3} S (\tilde{c}_1^{1})^{-1})
\end{aligned}
	\end{equation}
by Corollary 1.3 of~\cite{Jo03b} and Theorem 1 of~\cite{QR13b}. By $F_{2 \to 1;0}$, we denote the distribution function of the Airy$_{2 \to 1}$ process in the point $0$.
\end{itemize}

\section{Backwards paths and couplings}\label{sectCoupling}

In order to control the space-time region which influences the position of a tagged particle, we introduce the backwards paths. In the framework of particle positions, their notion goes back to~\cite{Fer18}, see also~\cite{FN19,BBF21}.

Unless mentioned otherwise, $x(t)$ denotes a TASEP with arbitrary initial condition throughout this section. For any label $N \in \Z$, we define a process of backwards indices $N(t \downarrow \cdot)$ starting from time $t > 0$ backwards to time $0$ in the following way: we set $N(t \downarrow t) = N$. For each $s \in [0,t]$ such that\footnote{By $s^+$, we mean a time infinitesimally larger than $s$.} $N(t \downarrow s^+) = n \in \Z$ and there is a suppressed jump attempt of the particle with label $n$ at time $s$, we update $N(t \downarrow s) = n-1$.

The \emph{backwards path associated to the label $N$ at time $t$} is defined as
\begin{equation}
\pi_{N,t} = \{ x_{N(t \downarrow s)}(s), s \in [0,t]\}.
\end{equation}
 Since for TASEP, the probability of simultaneous jump attempts of several particles equals $0$, the backwards path $\pi_{N,t}$ almost surely has steps of size $1$.

\subsection{Finite time results}
In this section, we collect some properties of backwards paths on finite time intervals. First, we discuss how the theory of backwards paths can be applied for the localization of correlation of particles in TASEP with respect to space and time.

\paragraph*{Independence away from the backwards path region} A tagged particle position $x_N(t)$ at time $t$ does not depend on the behaviour of particles outside of the region of its backwards path. More precisely, if one can find a deterministic region that contains the backwards path with probability converging to $1$ as time tends to infinity, then the tagged particle is asymptotically independent of the randomness outside of this region. Though used before~\cite{Fer18,FN19}, a detailed statement (outside the proofs) about this property of backwards paths has not been written down yet. We do so in the following Lemma~\ref{lemma_tagged_particle_position_only_depends_on_backwards_path} and Corollary~\ref{CorIndependence}.
\begin{lem} \label{lemma_tagged_particle_position_only_depends_on_backwards_path}
For a given pair $N,t$, define the event $E_{\cal C}$ such that the backwards path $\pi_{N,t}$ starting at $x_N(t)$ is contained in a deterministic region $\mathcal{C} \subseteq \Z \times [0,t]$. Denote by $\tilde{x}(t)$ the TASEP with $\tilde{x}(0) = x(0)$ that shares its jump attempts with $x(t)$ in $\mathcal{C}$, but has a \emph{deterministic} density $1$ (resp.\ $0$) to the left (resp.\ right) of $\cal C$. Then
\begin{equation}
\Pb(x_N(t)= \tilde x_N(t)|E_{\cal C})=1.
\end{equation}
In other words, since the configuration of $\tilde x$ is deterministic outside of $\cal C$, on the event $E_{\cal C}$, the random variable $x_N(t)$ is independent of the randomness outside of $\cal C$.
\end{lem}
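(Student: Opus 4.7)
The plan is to use the standard graphical construction of TASEP, couple $x$ and $\tilde{x}$ so that they share all jump clocks and initial data inside $\mathcal{C}$, and exploit the key causal property of the backwards path: on $E_{\mathcal{C}}$, the value $x_N(t)$ is reconstructible from data supported entirely inside $\mathcal{C}$. Once established, the equality $x_N(t) = \tilde{x}_N(t)$ on $E_{\mathcal{C}}$ follows, and the stated independence is immediate because $\tilde{x}_N(t)$ is, by construction, a function only of data inside $\mathcal{C}$ (everything outside being deterministic).

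First I would set up the coupling. For each label $n$, let $\Xi_n$ be an independent rate-$1$ Poisson process of jump attempts on $[0,t]$, and build $x$ from $\{\Xi_n\}_{n\in\Z}$ and $x(0)$ in the usual way. Construct $\tilde{x}$ from the \emph{same} $\Xi_n$ at every space-time attempt lying in $\mathcal{C}$ and from independent randomness elsewhere, tuned so that $\tilde{x}$ maintains occupation $1$ on every site to the left of each time-slice of $\mathcal{C}$ and occupation $0$ on every site to the right.

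Next I would decompose the backwards path. On $E_{\mathcal{C}}$ there are finitely many switching times $t = \sigma_0 > \sigma_1 > \ldots > \sigma_K \geq 0$ and labels $N = n_0 > n_1 > \ldots > n_K$ such that $N(t\downarrow s) = n_k$ on the interval between consecutive switching times containing $\sigma_k$, and each $\sigma_k$ with $k \geq 1$ is a blocked attempt of label $n_{k-1}$ by $n_k = n_{k-1}-1$, whence $x_{n_k}(\sigma_k) = x_{n_{k-1}}(\sigma_k) + 1$. Reading forward in time, the path position $x_{N(t\downarrow s)}(s)$ increases by $+1$ at every successful jump of the currently indexed particle and by $+1$ at every switching time (the re-routing lands us one site to the right). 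Hence $x_N(t) = x_{n_K}(0) + E$, where $E$ counts these events and is measurable with respect to the initial configuration at label $n_K$ together with the Poisson attempts $\Xi_{n_0}, \ldots, \Xi_{n_K}$ restricted to the appropriate sub-intervals along the path — all of which lie inside $\mathcal{C}$ by assumption.

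The heart of the proof is to verify that running the same reconstruction for $\tilde{x}$ produces the same sequence of labels, switching times, and events; this gives $\tilde{x}_N(t) = x_N(t)$. I would proceed by induction forward in time along the segments, maintaining the hypothesis that the positions of all labels so far visited agree between $x$ and $\tilde{x}$ at all times seen so far. The main delicate point, and the step I expect to require the most care, is ruling out a phantom blocking of the currently indexed label $n_k$ for $\tilde{x}$ during a segment where $x$ has none: such a blocking would require a particle of $\tilde{x}$ at the site $x_{n_k}(s)+1$ that is absent in $x$. But that site lies immediately to the right of a point of the backwards path, so on $E_{\mathcal{C}}$ it is either inside $\mathcal{C}$ (where the two configurations coincide by the inductive hypothesis) or in the deterministic density-$0$ region of $\tilde{x}$ to the right of $\mathcal{C}$, where no particle can sit. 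The density-$1$ region to the left of $\mathcal{C}$ is harmless, since particles with larger label sit to the left of the current one and cannot block its rightward jumps. This closes the induction, yielding $\tilde{x}_N(t) = x_N(t)$ almost surely on $E_{\mathcal{C}}$, and the independence statement then follows at once.
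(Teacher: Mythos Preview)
Your forward-in-time induction along the segments of the backwards path is the right skeleton, and it is essentially what the paper does. But the step you flag as ``delicate'' is not closed by your argument. To rule out a phantom blocking of $\tilde{x}_{n_k}$ at a ring where $x_{n_k}$ jumps, you must know that the site $x_{n_k}(s)+1$ is unoccupied in $\tilde{x}$; you claim that inside $\mathcal C$ ``the two configurations coincide by the inductive hypothesis''. They need not. Your hypothesis tracks only the visited labels $n_K,\ldots,n_k$, and each only on its own segment. The occupant of $x_{n_k}(s)+1$ in $\tilde{x}$ is the particle with label $n_k-1=n_{k+1}$, whose position your induction controls only up to time $\sigma_{k+1}<s$, not at the current time $s\in(\sigma_{k+1},\sigma_k]$; and on the very first segment ($k=K$) the right neighbour $n_K-1$ was never visited at all. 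Sharing Poisson marks in $\mathcal C$ does not force the configurations there to agree, because the two processes feel different boundaries.

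The paper closes this gap with one observation you are missing: the construction of $\tilde{x}$ gives the global domination $x_n(s)\le \tilde x_n(s)$ for every $n$ and every $s$. With this in hand, phantom blocking is immediately excluded---if at a shared ring $x_{n_k}$ jumped but $\tilde x_{n_k}$ did not, then $\tilde x_{n_k}$ would fall strictly behind $x_{n_k}$, contradicting the inequality---and the forward induction then goes through segment by segment exactly as you outline. Two smaller points. First, the lemma is stated under basic coupling (Poisson processes attached to \emph{sites}, so that ``jump attempts in $\mathcal C\subseteq\Z\times[0,t]$'' is unambiguous), not the label clocks $\Xi_n$ you introduce; with label clocks, whether a ring lies in $\mathcal C$ depends on which process you read the position in, so your construction of $\tilde x$ is ill-posed as written. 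Second, at a switching time the path re-routes one site to the \emph{left} going forward (from $x_{n_k}(\sigma_k)$ to $x_{n_{k-1}}(\sigma_k)=x_{n_k}(\sigma_k)-1$), not to the right, so your displacement formula is off.
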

\begin{remark} \label{remark_on_lemma_3.1}
In fact, since in the underlying graphical construction of $x(t)$, there are almost surely no simultaneous Poisson events at any time, we actually have $E_{\cal C} \subseteq \{ x_N(t) = \tilde{x}_N(t) \}$ almost surely.
\end{remark}
\begin{proof}
Assume that the event $E_{\cal C}$ occurs. We have to show that $x_N(t) = \tilde{x}_N(t)$. This is obtained similarly as Proposition 3.4 of~\cite{Fer18}. By definition, we have \mbox{$x_n(t) \leq \tilde{x}_n(t)$} for all $n,t$. It holds $\tilde{x}_{N(t \downarrow 0)}(0) = x_{N(t \downarrow 0)}(0)$, with $N(t \downarrow \cdot)$ constructed via the evolution of $x(t)$. If we still have $N(t \downarrow \tau) = N(t \downarrow 0)$ for $\tau \in [0,t]$ and $\tilde{x}_{N(t \downarrow 0)}(\tau)$ jumps to the right, then the same holds for $x_{N(t \downarrow \tau)}(\tau)$ since otherwise, the backwards index would be updated to $N(t \downarrow 0)-1$. Thus, for $\tau_0$ being the time when \mbox{$N(t \downarrow \tau_0) = N(t \downarrow 0)+1$} is updated to $N(t \downarrow 0)$, we have\footnote{By $\tau_0^-$, we mean a time infinitesimally smaller than $\tau_0$.} $\tilde{x}_{N(t \downarrow \tau_0 ^- )} (\tau_0 ^- ) = x_{N(t \downarrow \tau_0 ^-)} (\tau_0 ^-)$. This yields
\begin{equation}
x_{N(t\downarrow 0)}(\tau_0) - 1 = x_{N(t \downarrow \tau_0)}(\tau_0) \leq \tilde{x}_{N(t \downarrow \tau_0)}(\tau_0) < \tilde{x}_{N(t \downarrow 0)}(\tau_0) = x_{N(t \downarrow 0)}(\tau_0). \label{eq_pf_dep_only_on_backwards_path}
\end{equation}
The last equality holds true since we almost surely do not have several jump attempts at the same time. From \eqref{eq_pf_dep_only_on_backwards_path}, we deduce
$x_{N(t \downarrow \tau_0)}(\tau_0) = \tilde{x}_{N(t \downarrow \tau_0)}(\tau_0)$. Iterating this procedure, we obtain $x_N(t) = \tilde{x}_N(t)$.
\end{proof}
As a direct corollary, we get the following result.
\begin{cor}\label{CorIndependence}
For two pairs $N_1,t_1$ and $N_2,t_2$ and two deterministic \emph{disjoint} regions ${\cal C}_1,{\cal C}_2$, define the event $E=\{\pi_{N_1,t_1}\subseteq{\cal C}_1,\pi_{N_2,t_2}\subseteq{\cal C}_2\}$. Then, for any $x_1,x_2\in\Z$,
\begin{equation}
\Pb(\{x_{N_1}(t_1)\leq x_1\}\cap\{x_{N_2}(t_2)\leq x_2\})=\Pb(x_{N_1}(t_1)\leq x_1)\Pb(x_{N_2}(t_2)\leq x_2)+R
\end{equation}
with $|R|\leq 6\Pb(E^c).$
\end{cor}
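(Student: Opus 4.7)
The plan is to apply Lemma~\ref{lemma_tagged_particle_position_only_depends_on_backwards_path} twice—once for each pair $(N_i,t_i)$ with localization region $\mathcal{C}_i$—and then exploit the disjointness of $\mathcal{C}_1$ and $\mathcal{C}_2$ to obtain independence of two auxiliary random variables. Concretely, for each $i\in\{1,2\}$ I would introduce the coupled TASEP $\tilde x^{(i)}$ that shares its Poisson jump attempts with $x$ inside $\mathcal{C}_i$ and has the frozen configuration (density $1$ to the left, density $0$ to the right of $\mathcal{C}_i$) outside. Set $E_i=\{\pi_{N_i,t_i}\subseteq \mathcal{C}_i\}$, so that $E=E_1\cap E_2$; by Lemma~\ref{lemma_tagged_particle_position_only_depends_on_backwards_path}, $x_{N_i}(t_i)=\tilde x^{(i)}_{N_i}(t_i)$ almost surely on $E_i$.

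The key step is then to establish that $\tilde x^{(1)}_{N_1}(t_1)$ and $\tilde x^{(2)}_{N_2}(t_2)$ are independent. Each $\tilde x^{(i)}_{N_i}(t_i)$ is a deterministic functional of the Poisson clock events lying inside $\mathcal{C}_i$: for every site $y\in\Z$, only the events at times $\{s:(y,s)\in\mathcal{C}_i\}$ can affect $\tilde x^{(i)}$, since outside $\mathcal{C}_i$ the configuration is frozen. Disjointness of $\mathcal{C}_1$ and $\mathcal{C}_2$ means that for every $y$ these time-slices are disjoint, and the independence-of-restrictions property of a Poisson process on $\mathbb{R}_{\geq 0}$ then yields independence of $\tilde x^{(1)}_{N_1}(t_1)$ and $\tilde x^{(2)}_{N_2}(t_2)$. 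This is the only genuinely nontrivial point, and it amounts to carefully unpacking the construction in Lemma~\ref{lemma_tagged_particle_position_only_depends_on_backwards_path}.

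The rest is standard error accounting. Writing $A_i=\{x_{N_i}(t_i)\leq x_i\}$ and $\tilde A_i=\{\tilde x^{(i)}_{N_i}(t_i)\leq x_i\}$, the identity $A_i=\tilde A_i$ on $E_i\supseteq E$ implies both $|\Pb(A_1\cap A_2)-\Pb(\tilde A_1\cap\tilde A_2)|\leq \Pb(E^c)$ and $|\Pb(A_i)-\Pb(\tilde A_i)|\leq \Pb(E^c)$, since on $E$ the two events coincide. Using $\Pb(\tilde A_1\cap\tilde A_2)=\Pb(\tilde A_1)\Pb(\tilde A_2)$ from the independence established above, together with the elementary estimate $|ab-cd|\leq|a-c|+|b-d|$ for $a,b,c,d\in[0,1]$, one obtains $|R|\leq 3\Pb(E^c)$, which is well within the stated bound $6\Pb(E^c)$.
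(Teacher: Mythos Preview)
Your proposal is correct and follows essentially the same route as the paper: introduce the auxiliary processes $\tilde x^{(i)}$ localized to $\mathcal{C}_i$, invoke Lemma~\ref{lemma_tagged_particle_position_only_depends_on_backwards_path} to identify $x_{N_i}(t_i)$ with $\tilde x^{(i)}_{N_i}(t_i)$ on $E_i$, use disjointness of $\mathcal{C}_1,\mathcal{C}_2$ for independence of the $\tilde x^{(i)}$, and then do error accounting. Your bookkeeping is in fact slightly tighter---by using $A_i\triangle\tilde A_i\subseteq E_i^c$ directly rather than passing through $\Pb(\cdot\cap E)$ twice, you get $|R|\leq 3\Pb(E^c)$ instead of the paper's $6\Pb(E^c)$---but this is a cosmetic improvement on the same argument.
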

\begin{proof}
For $k=1,2$, let $x^{(k)}$ denote the process with deterministic configurations outside of ${\cal C}_k$ (as $\tilde x$ defined in Lemma~\ref{lemma_tagged_particle_position_only_depends_on_backwards_path}). Then, for any $x_1,x_2\in\Z$,
\begin{equation}
\begin{aligned}
&\left|\Pb(\{x_{N_1}(t_1)\leq x_1\}\cap\{x_{N_2}(t_2)\leq x_2\})-\Pb(\{x_{N_1}(t_1)\leq x_1\}\cap\{x_{N_2}(t_2)\leq x_2\}\cap E)\right|\\
&\leq \Pb(E^c).
\end{aligned}
\end{equation}
For the second term, by Lemma~\ref{lemma_tagged_particle_position_only_depends_on_backwards_path} we have
\begin{equation}
\Pb(\{x_{N_1}(t_1)\leq x_1\}\cap\{x_{N_2}(t_2)\leq x_2\}\cap E)=\Pb(\{x^{(1)}_{N_1}(t_1)\leq x_1\}\cap\{x^{(2)}_{N_2}(t_2)\leq x_2\}\cap E).
\end{equation}
Furthermore,
\begin{equation}
\begin{aligned}
&|\Pb(\{x^{(1)}_{N_1}(t_1)\leq x_1\}\cap\{x^{(2)}_{N_2}(t_2)\leq x_2\}\cap E)-\Pb(x^{(1)}_{N_1}(t_1)\leq x_1)\Pb(x^{(2)}_{N_2}(t_2)\leq x_2)|\\
&\leq \Pb(E^c),
\end{aligned}
\end{equation}
where we used that the processes $x^{(1)}_{N_1}(t_1)$ and $x^{(2)}_{N_2}(t_2)$ are independent random variables since ${\cal C}_1 \cap {\cal C}_2 = \varnothing$.
Therefore,
\begin{equation} \label{eq3.8}
\begin{aligned}
&|\Pb(\{x_{N_1}(t_1)\leq x_1\}\cap\{x_{N_2}(t_2)\leq x_2\})-\Pb(x^{(1)}_{N_1}(t_1)\leq x_1)\Pb(x^{(2)}_{N_2}(t_2)\leq x_2)|\\
&\leq 2\Pb(E^c).
\end{aligned}
\end{equation}
Thus,
\begin{equation}
\begin{aligned}
&\left|\Pb(\{x_{N_1}(t_1)\leq x_1\}\cap\{x_{N_2}(t_2)\leq x_2\})-\Pb(x_{N_1}(t_1)\leq x_1)\Pb(x_{N_2}(t_2)\leq x_2)\right|\\
&\leq 2\Pb(E^c) + |\Pb(x^{(1)}_{N_1}(t_1)\leq x_1)\Pb(x^{(2)}_{N_2}(t_2)\leq x_2)-\Pb(x_{N_1}(t_1)\leq x_1)\Pb(x_{N_2}(t_2)\leq x_2)|.
\end{aligned}
\end{equation}
The second summand is bounded by
\begin{equation}
\begin{aligned}
&|\Pb(x^{(1)}_{N_1}(t_1)\leq x_1)-\Pb(x_{N_1}(t_1)\leq x_1)| \Pb(x^{(2)}_{N_2}(t_2)\leq x_2)\\
&+\Pb(x_{N_1}(t_1)\leq x_1)|\Pb(x^{(2)}_{N_2}(t_2)\leq x_2)-\Pb(x_{N_2}(t_2)\leq x_2)|\leq 4 \Pb(\E^c).
\end{aligned}
\end{equation}
In the last step, we applied the same arguments as for \eqref{eq3.8} for both summands.
\end{proof}

\begin{remark} \label{remark_on_cor_3.3}
Corollary~\ref{CorIndependence} rigorously outlines how to achieve independence of particle positions when their backwards paths are restricted to disjoint regions. This application of Lemma~\ref{lemma_tagged_particle_position_only_depends_on_backwards_path} can be extended to tagged particle positions in different TASEPs sharing their jump attempts at each site, and further from fixed times to time intervals. Our use of Lemma~\ref{lemma_tagged_particle_position_only_depends_on_backwards_path} in Section~\ref{sectLocal} and Section~\ref{section_proof_of_the_main_results} is more implicit since we only control the fluctuations of backwards paths in one direction and utilize a particle-hole duality afterwards.
\end{remark}

\paragraph*{Two couplings}
Further properties of backwards paths as well as implications of their theory are obtained by comparing them for different, suitably coupled TASEPs. For this purpose, we consider two kinds of coupling. TASEPs are coupled by the well-known \emph{basic coupling} if one uses the same family of Poisson processes in the graphical construction by Harris~\cite{Har72,Har78}, meaning that the TASEPs share their jump attempts at each site. Still, the underlying Poisson processes can also be attached to the particles instead of to the sites, see~\cite{GKM21}. Based on this, we say that several TASEPs are coupled by \emph{clock coupling} if the jump attempts of their respective particles with the same label are described by the same Poisson process.

For both couplings, it is possible to obtain a concatenation property of the backwards paths. First considering the case of basic coupling, we introduce the following notation: let $x^{\textrm{step}, Z}(\tau,t )$ describe a TASEP starting at time $\tau \geq 0$ from step initial condition with rightmost particle at position $Z \in \mathbb{Z}$. That is, \begin{equation}x_n^{\textrm{step}, Z}(\tau, \tau) = - n + Z + 1 \textrm{ for } n \in \N.\end{equation} We omit $Z$ in the notation if $Z=0$, and if we also have $\tau = 0$, then we write $x^{\textrm{step}}(t)$. Additionally, we denote $y_n^Z(\tau,t) = x_n^{\textrm{step}, Z}(\tau,t) - Z$.
\begin{prop}[Proposition 3.4 of~\cite{Fer18}] \label{Prop_3.4_[Fer18]_basic_coupling} Let $\tau \in [0,t]$ and assume that all occurring processes are coupled by basic coupling. Then, it holds
	\begin{equation} x_N(t) = x_{N(t \downarrow \tau)}(\tau) + y_{N-N(t \downarrow \tau)+1}^{x_{N(t \downarrow \tau)}(\tau)} (\tau,t) = x_{N-N(t \downarrow \tau)+1}^{{\rm step},x_{N(t \downarrow \tau)}(\tau)} (\tau,t)\end{equation}
	and
	\begin{equation} y_{N-N(t \downarrow \tau)+1}^{x_{N(t \downarrow \tau)}(\tau)} (\tau,t) \overset{(d)}{=} x_{N-N(t \downarrow \tau) +1}^{{\rm step}} (t-\tau).\end{equation}
\end{prop}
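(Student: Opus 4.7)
The plan is to use Lemma~\ref{lemma_tagged_particle_position_only_depends_on_backwards_path} together with attractiveness of TASEP under basic coupling. Set $M := N(t \downarrow \tau)$ and $Z := x_M(\tau)$, and couple the step TASEP $\tilde x(s) := x^{\textrm{step}, Z}(\tau, s)$ to the original process $x(\cdot)$ via the basic coupling (shared site clocks) for $s \geq \tau$. Since the labels-to-positions map is strictly decreasing, $x_{M + k - 1}(\tau) \leq Z - k + 1 = \tilde x_k(\tau)$ with equality at $k = 1$; attractiveness preserves this ordering, yielding
\[
x_{M + k - 1}(s) \leq \tilde x_k(s) \qquad \text{for all } s \geq \tau,\; k \geq 1,
\]
and in particular $x_N(t) \leq \tilde x_{N - M + 1}(t)$. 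The bulk of the work is the reverse inequality at the specific pair $(N, t)$.

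For the reverse direction, the backwards path structure will be exploited. By definition of $M$, the path $\pi_{N, t}$ is at position $Z$ at time $\tau$ and remains in the space-time region covered by labels $\{M, M+1, \ldots, N\}$ throughout $[\tau, t]$, which is strictly to the left of the trajectories of particles with labels $< M$. Invoking Lemma~\ref{lemma_tagged_particle_position_only_depends_on_backwards_path} pathwise in the sense of Remark~\ref{remark_on_lemma_3.1} (conditioning on the realization of $\pi_{N, t}$ and integrating), one may modify the configuration outside this region without altering $x_N(t)$. Removing all particles of label $< M$ at time $\tau$ therefore leaves $x_N(t)$ unchanged, producing a modified process $\hat x$ in which $M$ is genuinely the rightmost particle and whose dynamics for $s \geq \tau$ use the same site clocks as $\tilde x$.

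To close the loop, induct on $k := N - M + 1$. For $k = 1$ the modified particle $M$ and $\tilde x_1$ start at $Z$ with no blocker and share all clocks, so they coincide for all $s \geq \tau$. For the inductive step, note that, going forward in time, the backwards index jumps from $n$ to $n + 1$ exactly at a suppressed jump attempt of particle $n + 1$, which requires $x_n$ and $x_{n+1}$ to be at consecutive sites. During the plateau where the backwards index equals $n$, particle $n$ in $\hat x$ is never blocked (any suppressed jump would force a decrease of the backwards index going backward), so its trajectory during that plateau is rigidly determined by the site clocks. The same rigidity holds for the corresponding particle in $\tilde x$, with the attractiveness inequality above ensuring that gaps only widen, so no extra blocking appears in $\tilde x$ that is absent in $\hat x$. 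Matching plateau by plateau from $s = \tau$ upward, one propagates the equality up to label $N$ at time $t$, giving $x_N(t) = \hat x_N(t) = \tilde x_{N - M + 1}(t) = Z + y^Z_{N - M + 1}(\tau, t)$.

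The distributional identity $y^Z_{N - M + 1}(\tau, t) \overset{(d)}{=} x^{\textrm{step}}_{N - M + 1}(t - \tau)$ then follows from space-time translation invariance of the driving Poisson clocks: conditionally on $(M, Z)$, the step TASEP $\tilde x$ depends only on the clocks for $s \geq \tau$, which are independent of the history of $x(\cdot)$ on $[0, \tau]$; a space shift by $-Z$ and a time shift by $-\tau$ identifies its law with that of a step TASEP starting at time $0$ from the origin, and integrating over $(M, Z)$ preserves this marginal. The main obstacle will be the inductive propagation of equality: although the picture is geometrically natural, verifying rigorously that every suppressed jump attempt in $\hat x$ is mirrored in $\tilde x$ (and vice versa), despite the different initial gaps between consecutive particles, requires careful case-by-case bookkeeping, using the attractiveness inequality to rule out the appearance of extra blocking events in $\tilde x$.
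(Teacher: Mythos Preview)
Your approach to the pathwise identity is correct in outline but takes an unnecessary detour. The paper does not reprove this proposition (it is cited from \cite{Fer18}), but the argument is on display in the paper's proof of Lemma~\ref{lemma_tagged_particle_position_only_depends_on_backwards_path}, which explicitly follows Proposition~3.4 of \cite{Fer18}. There one compares $x$ and $\tilde x = x^{\mathrm{step},Z}(\tau,\cdot)$ \emph{directly}: attractiveness under basic coupling gives $x_{M+k-1}(s)\le\tilde x_k(s)$, and then one walks forward in time along the backwards index. On each plateau where $N(t\downarrow s)=n$, any jump of $\tilde x_{n-M+1}$ forces a jump of $x_n$ (otherwise the backwards index would decrease below $n$), so the equality $x_n(s)=\tilde x_{n-M+1}(s)$ persists; at the transition time $\tau_0$ where the index becomes $n+1$, the sandwich $x_n(\tau_0)-1=x_{n+1}(\tau_0)\le\tilde x_{n+2-M}(\tau_0)<\tilde x_{n+1-M}(\tau_0)=x_n(\tau_0)$ (compare \eqref{eq_pf_dep_only_on_backwards_path}) forces equality at the new index. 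Your intermediate process $\hat x$ and the appeal to Lemma~\ref{lemma_tagged_particle_position_only_depends_on_backwards_path} with a random region are redundant: the forward induction already delivers equality between $x$ and $\tilde x$ without them, and indeed that lemma is itself proved by this very argument.

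Your argument for the distributional identity has a gap. You claim that conditionally on $(M,Z)$ the process $\tilde x$ depends only on clocks for $s\ge\tau$, independent of the history on $[0,\tau]$. But $M=N(t\downarrow\tau)$ is constructed from the clocks on $[\tau,t]$, so conditioning on $M$ biases precisely those clocks, and the conditional law of $\tilde x$ is no longer that of a fresh step TASEP. The identity is simpler than you make it: for any \emph{deterministic} $z\in\Z$ and $n\ge 1$, space--time translation invariance of the driving Poisson field gives $y^z_n(\tau,t)\overset{(d)}{=}x^{\mathrm{step}}_n(t-\tau)$, and this law does not depend on $z$. That is all the paper ever uses---see the proof of Theorem~\ref{thm_midtimeestimate}, where the identity is invoked at deterministic labels $n$ after explicitly dropping the coupling.
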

Moreover, Lemma 2.1 of~\cite{Sep98c} yields
\begin{equation}
x_N(t) = \min_{n \leq N} \Bigl \{ x_n(\tau) + y_{N-n+1}^{x_n(\tau)}(\tau,t) \Bigr \} \label{estimate_Sep98_backwards_paths}.
\end{equation}
These identities hold true given that at no time several Poisson events occur simultaneously in the graphical construction of the processes. This is almost surely the case. Thus, we keep in mind that the statements in this section actually hold with probability $1$, without mentioning it explicitly each time.

By inspecting the proofs of Proposition~\ref{Prop_3.4_[Fer18]_basic_coupling} and \eqref{estimate_Sep98_backwards_paths}, we observe that the same arguments hold true for the case of clock coupling of the processes.
\begin{cor} \label{Prop_3.4_[Fer18]_clock_coupling} We denote by $y^{{\rm step}, m}(\tau;t)$ a TASEP with step initial condition starting at time $\tau \in [0,t]$, in which all particles with labels smaller than $m \in \mathbb{Z}$ are removed and whose rightmost particle starts at the position $x_m(\tau)$. That is, $y_n^{{\rm step}, m}(\tau;\tau) = x_m(\tau) - n + m$ for $n \geq m$. We couple the process with $x(t)$ by clock coupling. Then, it holds
\begin{equation}
x_N(t) = y_N^{{\rm step}, N(t \downarrow \tau)}(\tau;t) \label{eq_conca_clock_1}
\end{equation}
and
\begin{equation}
x_N(t) = \min_{m \leq N} \{ y_N^{{\rm step}, m}(\tau;t) \}. \label{eq_conca_clock_2}
\end{equation}
\end{cor}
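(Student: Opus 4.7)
The plan is to adapt the proofs of Proposition~\ref{Prop_3.4_[Fer18]_basic_coupling} and of identity~\eqref{estimate_Sep98_backwards_paths} by replacing basic-coupling monotonicity with its clock-coupling analogue. The first step is to establish attractiveness under clock coupling: for $n\geq m$ one has at time $\tau$
\begin{equation*}
y_n^{\mathrm{step},m}(\tau;\tau) = x_m(\tau)-(n-m) \geq x_n(\tau)
\end{equation*}
by the exclusion order of $x$, and this dominance propagates in time, i.e.\ $y_n^{\mathrm{step},m}(\tau;s)\geq x_n(s)$ for all $s\in[\tau,t]$. The proof is a short case analysis at each ring of the label-$n$ clock: either both copies have enough room in front and both jump, or neither does, or only $y_n$ has room and it alone jumps (widening the gap $y_n-x_n$); the remaining case in which only $x_n$ jumps is ruled out because $x_{n-1}\geq x_n+2$ together with $y_{n-1}\geq x_{n-1}$ and $y_n\geq x_n$ forces $y_{n-1}-y_n\geq 2$, so $y_n$ jumps as well. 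Specialising to $n=N$, $s=t$ gives $y_N^{\mathrm{step},m}(\tau;t)\geq x_N(t)$ for every $m\leq N$, which is one half of~\eqref{eq_conca_clock_2}.

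To prove~\eqref{eq_conca_clock_1}, I would set $m^{*}\coloneqq N(t\downarrow\tau)$ and track the integer-valued non-decreasing step function $s\mapsto N(t\downarrow s)$ forward in time from the value $m^{*}$ at $s=\tau$ to the value $N$ at $s=t$. By induction on its a.s.\ finitely many jump times I would show that
\begin{equation*}
x_{N(t\downarrow s)}(s) = y_{N(t\downarrow s)}^{\mathrm{step},m^{*}}(\tau;s) \quad\text{for all } s\in[\tau,t].
\end{equation*}
The base case $s=\tau$ is immediate. On an interval where the index equals a constant $n$, no suppressed jump of label $n$ occurs in $x$, so $x_n$ jumps at every ring of its label-$n$ clock; the inductive equality $y_n^{\mathrm{step},m^{*}}(s^-)=x_n(s^-)$ together with $y_{n-1}^{\mathrm{step},m^{*}}(s^-)\geq x_{n-1}(s^-)\geq x_n(s^-)+2$ (or the absence of any blocker if $n=m^{*}$) forces $y_n^{\mathrm{step},m^{*}}$ to jump at the same ring, preserving equality throughout the interval. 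At a jump time $s$ of the backwards index from $n$ to $n+1$, the defining suppressed jump of particle $n+1$ in $x$ gives $x_{n+1}(s^-)=x_n(s^-)-1$, whereupon exclusion in $y^{\mathrm{step},m^{*}}$ and attractiveness squeeze
\begin{equation*}
x_{n+1}(s^-)\leq y_{n+1}^{\mathrm{step},m^{*}}(s^-)\leq y_n^{\mathrm{step},m^{*}}(s^-)-1 = x_n(s^-)-1 = x_{n+1}(s^-),
\end{equation*}
propagating the equality to the new label $n+1$. Reading the induction at $s=t$ yields~\eqref{eq_conca_clock_1}, and combining this with the upper bound from the first paragraph gives~\eqref{eq_conca_clock_2}.

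The main (mild) obstacle is that under clock coupling, unlike under basic coupling, the ``extra-space'' case in which $y_n^{\mathrm{step},m^{*}}$ could jump while $x_n$ does not must be tracked explicitly. Along the backwards path this case is excluded, since the defining property of the backwards path forbids suppressed jumps of $x_n$ on the relevant interval, so each clock ring of label $n$ triggers simultaneous jumps in both copies; elsewhere in the attractiveness argument the extra-space case is harmless as it only widens the gap. Apart from this bookkeeping, the structural reasoning is identical to that of~\cite{Fer18,Sep98c}.
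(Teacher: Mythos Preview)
Your approach coincides with the paper's, whose entire proof is the one-line remark that the basic-coupling arguments behind Proposition~\ref{Prop_3.4_[Fer18]_basic_coupling} and~\eqref{estimate_Sep98_backwards_paths} carry over to clock coupling unchanged; you have spelled out that adaptation, and your forward induction along the backwards path (with the squeeze at each index jump) is correct.

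There is one slip in the attractiveness paragraph: the implication ``$x_{n-1}\geq x_n+2$, $y_{n-1}\geq x_{n-1}$ and $y_n\geq x_n$ force $y_{n-1}-y_n\geq 2$'' is false in general (take $y_n=y_{n-1}-1$ with both well to the right of $x_n$), so the case ``only $x_n$ jumps'' is not globally ruled out. The fix is to split that case by whether $y_n(s^-)>x_n(s^-)$ strictly, in which event a single jump of $x_n$ cannot break the dominance, or $y_n(s^-)=x_n(s^-)$, in which event your chain does give $y_{n-1}(s^-)\geq x_{n-1}(s^-)\geq x_n(s^-)+2=y_n(s^-)+2$, so $y_n$ jumps as well. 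This is exactly the proof of Lemma~\ref{lemma_clock_coupling_preserves_order}; with that correction your argument is complete.
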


\paragraph*{Comparison of backwards paths to TASEP with step initial condition}
A crucial observation for the bound on right fluctuations of a backwards path in Proposition~\ref{prop_right_fluctuations_backwards_path} is that under basic coupling, we can control them on a finite time interval by considering another backwards path in a TASEP with step initial condition instead.
\begin{lem} \label{lemma_prob_1_ordering_backwards_paths} Let $0 \leq t_1 < t_2 \leq T$. Assume $x_{N(T \downarrow t_1)}(t_1) \leq x_1$, $x_{N(T \downarrow t_2)}(t_2) \leq x^{{\rm step},x_1}_M(t_1,t_2)$ for some label $M \in \Z$, and the processes $x(t)$ and $ x^{{\rm step},x_1}(t_1, t)$ are coupled by basic coupling. Then for any $\tau\in [t_1,t_2]$,
	\begin{equation}
x_{N(T \downarrow \tau)}(\tau) \leq x^{{\rm step}, x_1}_{M(t_2 \downarrow \tau)}(t_1,\tau).
\end{equation}
\end{lem}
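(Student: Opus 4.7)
The plan is to reformulate the desired inequality, via Proposition~\ref{Prop_3.4_[Fer18]_basic_coupling}, into a comparison of particle positions in auxiliary step-TASEPs coupled via basic coupling, and then to conclude by invoking the two monotonicities of step-TASEP positions under basic coupling: monotonicity in the starting position and monotonicity (decreasing) in the label.

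First I introduce the auxiliary step-TASEP $\hat x(s) := x^{\text{step}, x_{N(T \downarrow t_1)}(t_1)}(t_1, s)$, basic-coupled with $x$. Applying Proposition~\ref{Prop_3.4_[Fer18]_basic_coupling} with intermediate time $t_1$ and initial label $N(T\downarrow\tau)$ at time $\tau$, and using that the backwards path from $(N(T\downarrow\tau),\tau)$ coincides on $[0,\tau]$ with the restriction of the backwards path from $(N,T)$, I obtain
\begin{equation*}
x_{N(T\downarrow\tau)}(\tau) = \hat x_{N(T\downarrow\tau)-N(T\downarrow t_1)+1}(\tau)
\qquad \text{for every } \tau\in[t_1,T].
\end{equation*}
The assumption $x_{N(T\downarrow t_1)}(t_1)\leq x_1$ together with the monotonicity of step-TASEP positions in the starting position (a standard consequence of basic coupling) then yields $\hat x_k(s)\leq x^{\text{step},x_1}_k(t_1,s)$ for every label $k\geq 1$ and every $s\geq t_1$, and in particular
\begin{equation*}
x_{N(T\downarrow\tau)}(\tau) \leq x^{\text{step},x_1}_{N(T\downarrow\tau)-N(T\downarrow t_1)+1}(t_1,\tau).
\end{equation*}

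Second, since $x^{\text{step},x_1}_k(t_1,\tau)$ is non-increasing in the label $k$, the conclusion of the lemma reduces to the combinatorial claim
\begin{equation*}
N(T\downarrow\tau)-N(T\downarrow t_1)+1 \geq M(t_2\downarrow\tau)
\qquad \text{for every } \tau\in[t_1,t_2].
\end{equation*}
To establish this I use the second assumption $x_{N(T\downarrow t_2)}(t_2)\leq x^{\text{step},x_1}_M(t_1,t_2)$ together with another application of Proposition~\ref{Prop_3.4_[Fer18]_basic_coupling}, now to $x^{\text{step},x_1}(t_1,\cdot)$ with intermediate time $\tau$: this re-expresses $x^{\text{step},x_1}_M(t_1,t_2)$ as a particle position in the step-TASEP starting at time $\tau$ from $x^{\text{step},x_1}_{M(t_2\downarrow\tau)}(t_1,\tau)$. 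Comparing with the corresponding expression for $x_{N(T\downarrow t_2)}(t_2)$ and invoking the two monotonicities under basic coupling, a violation of the required label inequality at some $\tau^\ast\in[t_1,t_2]$ would propagate, via forward evolution in $[\tau^\ast,t_2]$, to a contradiction with the assumption at $t_2$; the assumption at $t_1$ is used to initiate the comparison of the two coupled step-TASEPs and rule out that the backwards paths of $x$ and of $x^{\text{step},x_1}(t_1,\cdot)$ are poorly aligned at the left endpoint.

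The main obstacle is the second step: carefully tracking the discrete jumps of the two backwards indices $N(T\downarrow\cdot)$ and $M(t_2\downarrow\cdot)$ across the event times in $[t_1,t_2]$ and combining the two endpoint assumptions into the pointwise inequality. Here one exploits that almost surely no two event times of the two backwards paths coincide (no simultaneous Poisson events in the graphical construction, as recorded in Remark~\ref{remark_on_lemma_3.1}), so that between events both backwards indices are constant and only one of them jumps at each event time; this allows the backward comparison to be carried out inductively on the (finitely many) event times, with the two monotonicities under basic coupling propagating the comparison through the intervals between events.
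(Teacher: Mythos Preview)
Your reduction in step~4 is the gap: the label inequality
\[
N(T\downarrow\tau)-N(T\downarrow t_1)+1 \;\geq\; M(t_2\downarrow\tau)
\]
is sufficient for the conclusion, but it is \emph{not} a consequence of the hypotheses, so the proof cannot be completed along this line. A concrete counterexample: take $t_1=0$, $x_1=0$, let $x$ be the step TASEP with rightmost particle at $-1$, and let the only Poisson event in $(0,t_2]$ be at site $0$. Then $x^{\mathrm{step},0}_1$ jumps to $1$ while $x^{\mathrm{step},0}_2$ and all particles of $x$ stay put. With $T=t_2$, $N=1$, $M=2$ one checks that both hypotheses hold (with equality at $t_2$), yet $N(T\downarrow\tau)\equiv 1$, $N(T\downarrow 0)=1$, $M(t_2\downarrow\tau)\equiv 2$, so your label inequality reads $1\geq 2$ at every $\tau$. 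The lemma's conclusion nonetheless holds (again with equality). The underlying reason is that steps~2--3 only yield $x_{N(T\downarrow\tau)}(\tau)\leq x^{\mathrm{step},x_1}_{L(\tau)}(t_1,\tau)$, and this upper bound is generically looser than the actual position of the backwards path inside $x^{\mathrm{step},x_1}$; the slack is exactly $d=x_1-x_{N(T\downarrow t_1)}(t_1)$, which your label comparison does not account for.

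By contrast, the paper does not pass through Proposition~\ref{Prop_3.4_[Fer18]_basic_coupling} at all. It first proves the auxiliary Lemma~\ref{lemma_alwaysstepparticle} (under the first hypothesis alone, the backwards path of $x$ sits on \emph{some} particle of $x^{\mathrm{step},x_1}$ at every time in $[t_1,T]$), and then runs a direct backward-in-time crossing argument: starting from the ordering at $t_2$, it shows that whenever the two backwards paths touch, neither can overtake the other, by a case analysis on the single Poisson event at that instant (using Lemma~\ref{lemma_alwaysstepparticle} to rule out the bad cases). This avoids any global label bookkeeping and works even when your $L(\tau)$ is strictly smaller than $M(t_2\downarrow\tau)$.
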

This lemma holds true independently of the initial condition chosen for the original TASEP. For its proof, we first prove Lemma~\ref{lemma_alwaysstepparticle}, which states that if the backwards path $x_{N(T \downarrow \tau)}(\tau)$ is to the left of a site $x_1 \in \Z$ at time $t_1 \in [0,T]$, then (almost surely) at times $\tau \in [t_1,T]$ there is always a particle of the TASEP $x^{\textrm{step},x_1}(t_1, \tau)$ at the same position as the backwards path. Both relations are illustrated in Figure~\ref{figure_relation_backwards_paths_step_IC}.

\begin{lem} \label{lemma_alwaysstepparticle}
	Assume $x_{N(T \downarrow t_1)}(t_1) \leq x_1$ and the processes $x(t)$ and $ x^{{\rm step},x_1}(t_1, t)$ are coupled by basic coupling. Then,
	for each $\tau \in [t_1,T]$ there exists a label $m_\tau \in \N$ such that \begin{equation} x_{N(T \downarrow \tau)}(\tau) = x^{{\rm step}, x_1}_{m_\tau}(t_1, \tau). \end{equation}
\end{lem}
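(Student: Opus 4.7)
The plan is to deduce the lemma from a stronger invariant: for every $\tau \in [t_1,T]$ and every site $z \leq z_\tau := x_{N(T\downarrow\tau)}(\tau)$,
\[
\text{$z$ occupied in $x(\tau)$} \;\Longrightarrow\; \text{$z$ occupied in $x^{\textrm{step},x_1}(t_1,\tau)$}.
\]
Since $z_\tau$ is occupied in $x$ by the tracked particle, the invariant immediately produces an $x^{\textrm{step},x_1}$ particle at $z_\tau$, which supplies the required label $m_\tau$. The base case $\tau=t_1$ follows at once from the hypothesis $z_{t_1}\leq x_1$ together with the step initial condition, which fills every site $\leq x_1$.

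For the inductive step I would go through each Poisson clock event and verify preservation, classified by the firing site $z^*$. When $z^* \leq z_\tau-2$, the event acts on the pair $\{z^*,z^*+1\} \subset (-\infty,z_\tau]$ and the standard two-site attractiveness of TASEP under basic coupling preserves the inequality on that pair — the sole local configuration that would break it is precisely the one excluded by the inductive hypothesis. When $z^* = z_\tau-1$ and $x$ occupies $z_\tau-1$, this is a (potentially triggering) case (c) event: if it triggers, the path jumps leftward to $z_\tau-1$ and the invariant at $\tau$ ensures $x^{\textrm{step},x_1}$ also occupies $z_\tau-1$, furnishing the new matching label; if it does not trigger, no occupations change. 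Clock firings at $z^* \geq z_\tau+1$ act outside the invariant domain and need no check.

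The delicate case is $z^* = z_\tau$. Here either the tracked particle performs a successful rightward jump (case (b)), after which the invariant extends to the new site — by basic coupling, $x^{\textrm{step},x_1}$ either jumps in sync or was already blocked by an occupant of $z_\tau+1$, so in both sub-scenarios $z_\tau+1$ ends up occupied in $x^{\textrm{step},x_1}$ — or the jump is suppressed by a right-neighbour in $x$. I would rule out the second possibility using the defining update rule of $N(T\downarrow\cdot)$: on any maximal forward-time interval on which $N(T\downarrow\cdot)$ equals a constant $k$, a suppressed jump of the tracked particle $x_k$ at an interior time $s$ would force $N(T\downarrow s) = k-1$, contradicting constancy. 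Since the transitions of $N(T\downarrow\cdot)$ between consecutive constancy intervals occur precisely at case (c) events already handled above, this exclusion is complete.

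The main obstacle is precisely this last step. Ruling out suppressed jumps of the tracked particle requires reading $N(T\downarrow\cdot)$ simultaneously in its backward definition (non-increasing, decreasing at each suppressed jump of the current index) and in its forward description (non-decreasing, piecewise constant, jumping up only at triggering case (c) events), and using the hypothesis $x_{N(T\downarrow t_1)}(t_1) \leq x_1$ to anchor the initial value of $N(T\downarrow\cdot)$. Once this is in place, the invariant propagates through every event and the lemma follows.
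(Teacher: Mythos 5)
Your plan is correct, but it takes a genuinely different route from the paper's proof, so let me compare. The paper maintains a label-indexed matching invariant: every particle $x_n$ with $n\in\{N(T\downarrow t_1),\dots,N\}$ sits on a site occupied by $x^{\textrm{step},x_1}(t_1,\cdot)$, and it argues via a first failure time, checking that a failure can only be produced when the step particle jumps away from under a blocked $x$-particle, which forces the failing label to be the bottom one $N(T\downarrow t_1)$ and forces the backwards index to have already moved past it; the window is then shrunk and the argument restarted. You instead propagate a spatial domination invariant (occupation in $x$ implies occupation in $x^{\textrm{step},x_1}$ on $(-\infty,z_\tau]$) event by event: two-site attractiveness of the basic coupling for clocks strictly to the left of the path, explicit bookkeeping of the domain when the path moves, and — the decisive extra ingredient — exclusion of suppressed jump attempts at the path position via the update rule of $N(T\downarrow\cdot)$, which is sound (an interior suppressed attempt of the currently tracked particle would trigger an index update, contradicting constancy, and the transition times are clock events at $z_\tau-1$, which you handle separately); this is the same mechanism the paper uses in its Case~2 and in the proof of Lemma~\ref{lemma_tagged_particle_position_only_depends_on_backwards_path}. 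Your invariant covers all $x$-particles at or left of the path (including labels beyond $N$) but none to its right, whereas the paper's window also drags along previously tracked particles to the right of the current path until they drop out; in effect you trade the paper's restart-at-failure bookkeeping for the explicit suppressed-jump exclusion and an appeal to attractiveness, which the paper never needs to invoke. Two details you should write out when finalizing: in the clock-at-$z_\tau-1$ cases, the fact that a step particle at $z_\tau-1$ cannot jump away (so that either nothing changes or the new path position is matched) uses the invariant at the path site itself, namely that $z_\tau$ is occupied in $x^{\textrm{step},x_1}$; and the hypothesis $x_{N(T\downarrow t_1)}(t_1)\leq x_1$ is needed only to start the invariant at $\tau=t_1$, not to anchor the index in the exclusion step.
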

\begin{figure}[t!]
	\centering
	\includegraphics[scale=1]{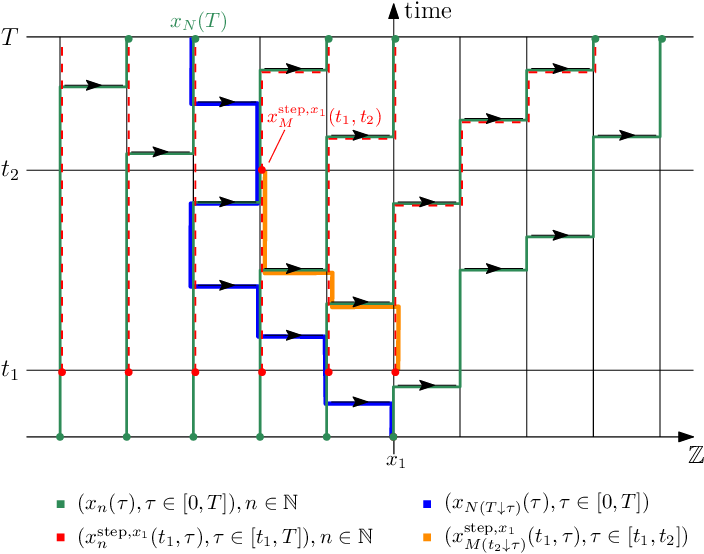}
	\caption{ Illustration of the relations displayed in Lemma~\ref{lemma_prob_1_ordering_backwards_paths} and Lemma~\ref{lemma_alwaysstepparticle}. }
	\label{figure_relation_backwards_paths_step_IC}
\end{figure}
\begin{proof}
	Since we have $x_{N(T \downarrow t_1)}(t_1) \leq x_1$, for all $n \in \{ N(T \downarrow t_1), \dots, N \}$ there is some $m = m(n) \in \N$ such that $x_n(t_1) = x^{\textrm{step}, x_1}_m(t_1, t_1)$. Suppose there exists a first time $\tau_1 > t_1$ such that for some $n \in \{ N(T \downarrow t_1), \dots, N \}$, there is no $m \in \N$ such that $x_n(\tau_1) = x^{\textrm{step}, x_1}_m(t_1,\tau_1)$.
	Then, two cases emerge, assuming $x_n(\tau_1^-) = x_m^{\textrm{step},x_1}(t_1,\tau_1^-)$.

	\textbf{Case 1:} $x_n(\tau_1)$ jumps and $x_m^{\textrm{step}, x_1}(t_1,\tau_1)$ is prohibited to jump. But then, there needs to be a particle of $x^{\textrm{step}, x_1} (t_1, \cdot)$ at the new position $x_n(\tau_1)$, thus this cannot happen in the discussed scenario.

	\textbf{Case 2:} The jump trial of $x_n(\tau_1)$ is suppressed while $x_m^{\textrm{step}, x_1}(t_1,\tau_1)$ jumps. Then, there is a particle $x_k(\tau_1)$ with label $k \in \N$ at the new position of $x_m^{\textrm{step}, x_1}(t_1,\tau_1)$ whereas there is no particle of $x^{\textrm{step}, x_1}(t_1,\cdot)$. As there are no simultaneous Poisson events in the graphical construction, there cannot be a jump attempt at site $x_k(\tau_1)$ at time $\tau_1$ as well. Since we set $\tau_1$ to be the \emph{first} time such that for \mbox{$n \in \{N(T \downarrow t_1), \dots, N\}$} there is no $m \in \N$ such that $x_n(\tau_1) = x^{\textrm{step}, x_1}_m(t_1,\tau_1)$, we obtain $k \not \in \{N(T \downarrow t_1), \dots,N\}$, thus $n = N(T \downarrow t_1), k = N(T \downarrow t_1) -1$. But in this case, it cannot hold $N(T \downarrow \tau_1) = n = N(T \downarrow t_1)$ since we would need to update the index process to $N(T \downarrow t_1)-1$ here. Thus, we already have $N(T \downarrow \tau_1) \geq N(T \downarrow t_1) +1 $.

	Next, we argue similarly for $n \in \{ N(T \downarrow t_1) +1, \dots, N \}$ and start at time $\tau_1$, as again for each $n \in \{ N(T \downarrow t_1) + 1 , \dots, N \}$ there exists a label $m \in \N$ such that $x_n(\tau_1) = x^{\textrm{step}, x_1}_m(t_1, \tau_1)$.
	Since the set $ \{ N(T \downarrow t_1), \dots, N \}$ is finite, the iteration ends after a finite number of steps.
\end{proof}

We are now ready to prove Lemma~\ref{lemma_prob_1_ordering_backwards_paths}.
\begin{proof}[Proof of Lemma~\ref{lemma_prob_1_ordering_backwards_paths}]
Suppose, going backwards in time, there is a time $\tau \in (t_1,t_2]$ such that $x_{N(T \downarrow \tau)}(\tau) = x^{\textrm{step},x_1}_{M(t_2 \downarrow \tau)}(t_1, \tau)$. We need to treat two scenarios:

	\textbf{Claim 1:} If it holds $x^{\textrm{step},x_1}_{M(t_2 \downarrow \tau^-)}(t_1, \tau^-) = x^{\textrm{step},x_1}_{M(t_2 \downarrow \tau)}(t_1, \tau) -1 $, then $x_{N(T \downarrow \tau^-)}(\tau^-) = x_{N(T \downarrow \tau)}(\tau) - 1$ is valid as well.
	
 First, suppose $x_{N(T \downarrow \tau^-)}(\tau^-) = x_{N(T \downarrow \tau)}(\tau)$. We know by Lemma~\ref{lemma_alwaysstepparticle} that for each $t_1 \leq s < \tau$, there is some label $m_s \in \N$ such that it holds $x_{N(T \downarrow s)}(s) = x^{\textrm{step},x_1}_{m_s}(t_1, s)$. But then, since we premised $x^{\textrm{step},x_1}_{M(t_2 \downarrow \tau^-)}(t_1, \tau^-) = x^{\textrm{step},x_1}_{M(t_2 \downarrow \tau)}(t_1, \tau) -1 $, this jump and the one of the particle of $x^{\textrm{step},x_1}(t_1, \cdot)$ previously at the position $x_{N(T \downarrow \tau)}(\tau)$ need to happen simultaneously at time $\tau$. As there are no simultaneous Poisson events in the construction of the processes, this cannot be true. 	Next, suppose $x_{N(T \downarrow \tau^-)}(\tau^-) = x_{N(T \downarrow \tau)}(\tau) +1$. Then, at time $\tau$ there must be a suppressed jump attempt of the $N(T \downarrow \tau)$-th particle and, since we assumed $x^{\textrm{step},x_1}_{M(t_2 \downarrow \tau^-)}(t_1, \tau^-) = x^{\textrm{step},x_1}_{M(t_2 \downarrow \tau)}(t_1, \tau) -1 $, also a jump attempt at the position next to the left. Again, this cannot occur. Thus, we have verified Claim 1.

	\textbf{Claim 2:} If it holds $x_{N(T \downarrow \tau^-)}(\tau^-) = x_{N(T \downarrow \tau)}(\tau) +1$, then $x^{\textrm{step}, x_1}_{M(t_2 \downarrow \tau^-)}(t_1,\tau^-) = x^{\textrm{step}, x_1}_{M(t_2 \downarrow \tau)}(t_1,\tau) + 1$ is valid as well.
	
	 We notice that given a jump attempt at time $\tau$, the equality $x^{\textrm{step}, x_1}_{M(t_2 \downarrow \tau^-)}(t_1,\tau^-) = x^{\textrm{step}, x_1}_{M(t_2 \downarrow \tau)}(t_1,\tau)$ is impossible: either the jump trial is suppressed and the backwards paths moves to the right, or the $M(t_2 \downarrow \tau^-)$-th particle jumps at time $\tau$.

	 Further, if it holds $x^{\textrm{step}, x_1}_{M(t_2 \downarrow \tau^-)}(t_1,\tau^-) = x^{\textrm{step}, x_1}_{M(t_2 \downarrow \tau)}(t_1,\tau) - 1,$ then there needs to be a jump attempt at $x^{\textrm{step}, x_1}_{M(t_2 \downarrow \tau)}(t_1,\tau) - 1$ at time $\tau$. Because of the assumption in Claim 2, there is also a jump attempt at $x^{\textrm{step}, x_1}_{M(t_2 \downarrow \tau)}(t_1,\tau) = x_{N(T \downarrow \tau)}(\tau)$. But several synchronous jump attempts cannot occur. Thus, Claim 2 is confirmed.

	Combining Claim 1 and Claim 2, we deduce that the order of the backwards paths is maintained at all times $\tau \in [t_1,t_2]$.
\end{proof}

\paragraph*{Comparison of increments with clock coupling} Apart from describing space-time-correlations, key applications of the theory of backwards paths are the comparison of increments of tagged particle positions over time intervals as well as the comparison of particle distances at a fixed time. In both cases, we need clock coupling of the processes.

For the comparison of increments of tagged particle positions, we first observe some elementary properties of clock coupling. We consider two TASEPs $x(t)$ and $\tilde{x}(t)$ under this coupling. To begin with, clock coupling preserves the partial order defined by
\begin{equation}
x(t) \leq \tilde{x}(t) \Leftrightarrow x_n(t) \leq \tilde{x}_n(t) \textrm{ for all } n \in \Z.
\end{equation}
\begin{lem} \label{lemma_clock_coupling_preserves_order}
	Let $x(0) \leq \tilde{x}(0)$. Then, it holds $x(t) \leq \tilde{x}(t)$ for any time $t \geq 0$.
\end{lem}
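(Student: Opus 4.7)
The plan is to proceed by a first-violation argument. I would suppose, for contradiction, that there is a smallest time $t_0 > 0$ at which the inequality $x_n(t) \leq \tilde{x}_n(t)$ fails for some label $n \in \Z$. Since under clock coupling the jump attempts of particles with a given label are governed by a single Poisson process (and configurations are piecewise constant between rings), the violation must occur at a ring of clock $n$, and I may assume $t_0$ is such a ring. Just before $t_0$, the inductive order $x_k(t_0^-) \leq \tilde{x}_k(t_0^-)$ holds for every $k$.

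Next I would split into two cases depending on whether $x_n(t_0^-) < \tilde{x}_n(t_0^-)$ or $x_n(t_0^-) = \tilde{x}_n(t_0^-)$. In the strict case, since $x_n$ advances by at most one at a jump and $\tilde{x}_n$ is non-decreasing, one immediately obtains
\begin{equation*}
x_n(t_0) \leq x_n(t_0^-) + 1 \leq \tilde{x}_n(t_0^-) \leq \tilde{x}_n(t_0),
\end{equation*}
so no violation occurs. The delicate case is $x_n(t_0^-) = \tilde{x}_n(t_0^-)$. If the clock-$n$ ring at $t_0$ does not trigger an actual jump in $x$, the order is trivially preserved. If it does, the exclusion rule requires that the target site be empty in $x$, i.e.\ $x_{n-1}(t_0^-) \geq x_n(t_0^-) + 2$. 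Combining this with the inductive bound $x_{n-1}(t_0^-) \leq \tilde{x}_{n-1}(t_0^-)$ and the assumed equality $x_n(t_0^-) = \tilde{x}_n(t_0^-)$ yields $\tilde{x}_{n-1}(t_0^-) \geq \tilde{x}_n(t_0^-) + 2$. Hence the same clock ring causes $\tilde{x}_n$ to jump as well, and both particles end up at the same site at time $t_0$. In every case the order is preserved at $t_0$, contradicting the choice of $t_0$.

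The main obstacle, and the only place where the clock coupling hypothesis is truly used, is precisely the case $x_n(t_0^-) = \tilde{x}_n(t_0^-)$: here one must rule out the scenario in which the $x$-process jumps while the $\tilde{x}$-process does not. Under basic coupling (clocks attached to sites) this scenario is genuinely possible and the stated partial order is in general not preserved. Under clock coupling, however, the blocking information for particle $n$ comes from particle $n-1$ in the \emph{same} process, and the inductive order for label $n-1$ combined with equality at label $n$ forces the blocking status of the two processes to agree, so the two jumps are synchronized. This observation is what makes the argument go through and is the reason clock coupling is the right tool for the comparison inequalities needed later.
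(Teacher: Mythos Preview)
Your argument is correct and follows essentially the same route as the paper: reduce to a single clock ring and do the two-case analysis on whether $x_n(t_0^-)<\tilde{x}_n(t_0^-)$ or $x_n(t_0^-)=\tilde{x}_n(t_0^-)$, using the order at label $n-1$ to force a synchronized jump in the equality case. The one point the paper makes explicit that you gloss over is why a ``smallest violation time'' exists in an infinite system: the paper invokes a Harris-type argument (with labels playing the role of sites) to partition the particles into a.s.\ finite blocks with finitely many Poisson events up to time $t$, so that the induction over events is well-posed.
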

\begin{proof}
	A similar argument as in the graphical construction by Harris~\cite{Har72,Har78} (with the space variable replaced by the labels of particles) implies that for any given time $t$, the construction of the particle evolution can be divided into almost surely finite (random) blocks of particles in which only finitely many Poisson events take place before time $t$. Thus, it is enough to verify the preservation of the partial order at each Poisson event, which we again suppose to occur at distinct times. Assume that at time $t_0$, there is a jump attempt of the particle with label $n \in \Z$, and $x(\tau) \leq \tilde{x}(\tau)$ for $\tau \in [0,t_0)$. If $x_n(t_0 ^-) < \tilde{x}_n(t_0 ^-)$, then also $x_n(t_0) \leq \tilde{x}_n(t_0)$. On the other hand, if $x_n(t_0 ^-) = \tilde{x}_n(t_0 ^-)$, then $x_{n-1}(t_0^ -) \leq \tilde{x}_{n-1}(t_0 ^-)$ implies that a jump of $x_n$ gives a jump of $\tilde{x}_n$ as well. Thus, we still have $x_n(t_0) \leq \tilde{x}_n(t_0)$.
\end{proof}
Most importantly, if in one initial configuration the spaces between particles are larger than in the other initial configuration, then this property is preserved over time and further yields an order of increments.
\begin{lem} \label{lemma_clock_coupling_particle_distances}
	Assume $x_{n-1}(0) - x_n(0) \geq \tilde{x}_{n-1}(0) - \tilde{x}_n(0)$ for each $n \in \Z$. Then, for each $n \in \Z$ and any time $t \geq 0$,
	\begin{equation}
	x_{n-1}(t) - x_n(t) \geq \tilde{x}_{n-1}(t) - \tilde{x}_n(t). \label{eq1_lemma_clock_coupling_particle_distances}
	\end{equation}
	Furthermore, for times $0 < t_1 < t_2$, the increments of the processes satisfy
	\begin{equation}
	x_n(t_2) - x_n(t_1) \geq \tilde{x}_n(t_2) - \tilde{x}_n(t_1). \label{eq2_lemma_clock_coupling_particle_distances}
	\end{equation}
\end{lem}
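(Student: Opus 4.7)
The plan is to run the standard induction over Poisson events used in the proof of Lemma~\ref{lemma_clock_coupling_preserves_order}. First I would reduce to the event-by-event statement: by splitting time into the almost surely finite random blocks in which only finitely many clock rings occur and which contain no simultaneous events, it suffices to check that the gap ordering is preserved at each individual ring. Denote $g_n(t) \coloneqq x_{n-1}(t) - x_n(t)$ and $\tilde g_n(t) \coloneqq \tilde x_{n-1}(t) - \tilde x_n(t)$; note $g_n, \tilde g_n \geq 1$ by the exclusion constraint. The goal of the first part is to verify that $g_n(t_0) \geq \tilde g_n(t_0)$ for every $n$ given the same inequality just before an arbitrary ring time $t_0$.

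Suppose the clock of particle $m$ rings at time $t_0$. Under clock coupling this is the same event in both $x$ and $\tilde x$, and the only gaps that can change are $g_m$ and $g_{m+1}$ (each flipped by $\pm 1$ if the jump occurs, or unchanged if blocked). I would case-split on the value of $\tilde g_m(t_0^-)$. If $\tilde g_m(t_0^-) \geq 2$ then the inductive hypothesis forces $g_m(t_0^-) \geq 2$, both particles jump, and both pairs of gaps change by the same amount, preserving the inequality. If $\tilde g_m(t_0^-) = 1$ then $\tilde x_m$ is blocked; either $g_m(t_0^-) = 1$ too (then $x_m$ is blocked and nothing changes), or $g_m(t_0^-) \geq 2$ and $x_m$ jumps, which leaves $g_m(t_0) = g_m(t_0^-) - 1 \geq 1 = \tilde g_m(t_0)$ and only increases $g_{m+1}$ while $\tilde g_{m+1}$ stays put. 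The case $g_m(t_0^-) = 1 < \tilde g_m(t_0^-)$ is ruled out by the inductive hypothesis. This finishes \eqref{eq1_lemma_clock_coupling_particle_distances}.

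For \eqref{eq2_lemma_clock_coupling_particle_distances} I would observe that under clock coupling the set of rings of particle $n$ in $[t_1,t_2]$ is a common Poisson set of times $\tau_1 < \tau_2 < \dots < \tau_K$ for both processes. At each $\tau_i$, particle $x_n$ jumps iff $g_n(\tau_i^-) \geq 2$ and particle $\tilde x_n$ jumps iff $\tilde g_n(\tau_i^-) \geq 2$. Part~\eqref{eq1_lemma_clock_coupling_particle_distances} gives $g_n(\tau_i^-) \geq \tilde g_n(\tau_i^-)$, so every jump of $\tilde x_n$ in $[t_1,t_2]$ forces a jump of $x_n$ at the same time. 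Consequently the number of jumps of $x_n$ in $[t_1, t_2]$ dominates that of $\tilde x_n$, which is precisely $x_n(t_2) - x_n(t_1) \geq \tilde x_n(t_2) - \tilde x_n(t_1)$.

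The only delicate point is the use of the clock-coupling property in the case analysis: it is essential that ring times of particle $m$ coincide in both processes, since under basic coupling the jump attempts are synchronised by \emph{site} rather than by label and the above case-check would fail (a jump attempt at a site occupied by $x_m$ need not correspond to one at the site of $\tilde x_m$). Apart from this observation everything is bookkeeping; no asymptotics or auxiliary couplings are needed.
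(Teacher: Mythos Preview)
Your proof is correct and follows essentially the same approach as the paper: an event-by-event case analysis on the gap in front of the ringing particle, noting that the only nontrivial case is when $\tilde x_m$ is blocked but $x_m$ is not, and then deducing \eqref{eq2_lemma_clock_coupling_particle_distances} from the fact that every jump of $\tilde x_n$ forces a jump of $x_n$. Your organization of the cases and the explicit mention of the Harris-type localization to finite blocks are slightly more detailed than the paper's version, but the argument is the same.
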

\begin{proof}
	Suppose that at time $t$, the clock of the particles with label $n$ rings. Only gaps between the particles with labels $n+1$ and $n$ respectively $n$ and $n-1$ can change. There are three possible cases:

(a) If $x_{n-1}(t^-) - x_{n}(t^-) = \tilde{x}_{n-1}(t^-) - \tilde{x}_{n}(t^-) = 1$, then both $x_n$ and $\tilde{x}_n$ do not jump.

(b) If $x_{n-1}(t^-) - x_{n}(t^-) > \tilde{x}_{n-1}(t^-) - \tilde{x}_{n}(t^-) = 1$, then $x_n$ jumps and $\tilde{x}_n$ does not jump.

(c) If $x_{n-1}(t^-) - x_{n}(t^-) \geq \tilde{x}_{n-1}(t^-) - \tilde{x}_{n}(t^-) > 1$, then both $x_n$ and $\tilde{x}_n$ jump.

	In the cases (a) and (c), the gaps $x_{n-1} - x_n$ and $\tilde{x}_{n-1} - \tilde{x}_n$ as well as $x_{n} - x_{n+1}$ and $\tilde{x}_{n} - \tilde{x}_{n+1}$ change by the same amount, which preserves their order. In case (b), $x_{n-1} - x_n$ decreases by $1$ while $\tilde{x}_{n-1} - \tilde{x}_n$ is unchanged. Since their difference was positive before time $t$, their order is still maintained. On the other hand, $x_n - x_{n+1}$ increases by $1$ while $\tilde{x}_n - \tilde{x}_{n+1}$ remains unchanged. This verifies \eqref{eq1_lemma_clock_coupling_particle_distances}.

	The inequality \eqref{eq2_lemma_clock_coupling_particle_distances} is due to the fact that by \eqref{eq1_lemma_clock_coupling_particle_distances}, whenever $\tilde{x}_n(t)$ jumps, the same applies to $x_n(t)$.
\end{proof}
A direct consequence of Lemma~\ref{lemma_clock_coupling_particle_distances} is that for two TASEPs with step initial condition and rightmost particle at the same position, the displacement of the particle  with less particles on its right is greater than the displacement of the particle with more particles on its right.
\begin{cor} \label{corollary_clock_coupling_order_TASEP_step}
	Consider two TASEPs $x(t), \tilde{x}(t)$ with step initial condition and rightmost particle at site $a \in \Z$. Let the labels of $x(t)$ and $\tilde{x}(t)$ be $\geq m$ respectively $\geq \tilde{m}$, where $\tilde{m} \leq m$. That is, $x_n(0) = a - n + m$, $n \geq m$ and $\tilde{x}_n(0) = a - n + \tilde{m}$, $n \geq \tilde{m}$. Then, for each $N \geq m$, it holds:
	\begin{equation}
	x_N(t) \geq \tilde{x}_N(t) + m - \tilde{m} \textrm{ for all } t > 0, \label{eq1_corollary_clock_coupling_TASEP_step}
\end{equation}
and
\begin{equation}
	 x_N(t_2) - x_N(t_1) \geq \tilde{x}_N(t_2) - \tilde{x}_N(t_1) \textrm{ for all } t_2 > t_1 > 0. \label{eq2_corollary_clock_coupling_TASEP_step}
\end{equation}
\end{cor}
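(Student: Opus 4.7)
The plan is to adapt the proof of Lemma~\ref{lemma_clock_coupling_particle_distances} to the case in which the two TASEPs are indexed by different label sets, $\{n \geq m\}$ for $x$ and $\{n \geq \tilde m\}$ for $\tilde x$ with $\tilde m \leq m$. The idea is to adopt the convention $x_{m-1}(t) := +\infty$ for all $t \geq 0$, encoding the absence of a particle to the right of $x_m$ in $x$, and then to run the inductive Poisson-event argument from Lemma~\ref{lemma_clock_coupling_particle_distances} on labels $n \geq m$.

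With this convention, the initial gap-ordering hypothesis holds at every $n \geq m$: at $n = m$ it reads $\infty \geq 1$, and at $n \geq m+1$ both gaps equal $1$. Poisson events for labels $n < m$ affect only $\tilde x$ and can at most increase the $\tilde x$-gap at $n = m$, which does not disturb the $\infty \geq \cdot$ inequality on the $x$-side. For labels $n \geq m$, the shared-clock case analysis (a)--(c) of the proof of Lemma~\ref{lemma_clock_coupling_particle_distances} applies verbatim, with the label-$m$ situation trivialised by the absence of a right-obstruction for $x_m$. This preserves the gap ordering for all $t \geq 0$ and delivers the key consequence that whenever $\tilde x_n$ jumps (for $n \geq m$), so does $x_n$.

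Counting jumps then gives $x_N(t_2) - x_N(t_1) \geq \tilde x_N(t_2) - \tilde x_N(t_1)$ for all $N \geq m$ and $0 \leq t_1 < t_2$, which is \eqref{eq2_corollary_clock_coupling_TASEP_step}. Specialising to $t_1 = 0$ (the counting argument is insensitive to the strict inequality $t_1 > 0$ appearing in the statement of Lemma~\ref{lemma_clock_coupling_particle_distances}) and using the initial displacement $x_N(0) - \tilde x_N(0) = (a - N + m) - (a - N + \tilde m) = m - \tilde m$ yields \eqref{eq1_corollary_clock_coupling_TASEP_step}.

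The main step deserving care is the induction case where $x_n$ jumps but $\tilde x_n$ does not: the $x$-gap at $n$ must have been at least $2$ before the event (strictly greater than $1$ and integer valued), so after decreasing by $1$ it remains $\geq 1$ and still dominates the unchanged $\tilde x$-gap, which equals $1$. With this observation the induction closes cleanly, and no auxiliary virtual-particle construction is needed. As a backup route, one could alternatively reduce to Lemma~\ref{lemma_clock_coupling_particle_distances} directly by inserting $m - \tilde m$ virtual particles sufficiently far to the right of $x_m$ and clock-coupling with $\tilde x$; the main obstacle there would be that the required offset depends pathwise on the random number of label-$m$ clock rings in $[0,T]$, forcing one to choose the offset realisation by realisation or via a monotone limit.
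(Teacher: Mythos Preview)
Your proof is correct and follows the same approach as the paper: apply the gap-ordering argument of Lemma~\ref{lemma_clock_coupling_particle_distances} to deduce \eqref{eq2_corollary_clock_coupling_TASEP_step}, then specialise to $t_1=0$ to obtain \eqref{eq1_corollary_clock_coupling_TASEP_step}. The paper's proof is a bare two-line citation of Lemma~\ref{lemma_clock_coupling_particle_distances}; you have made explicit, via the convention $x_{m-1}(t)=+\infty$, why that lemma extends to the present setting where $x$ carries fewer particles than $\tilde x$ --- this is exactly the detail the paper leaves implicit.
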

\begin{proof}
	Lemma~\ref{lemma_clock_coupling_particle_distances} yields \eqref{eq2_corollary_clock_coupling_TASEP_step}. Having this, \eqref{eq1_corollary_clock_coupling_TASEP_step} is obtained by setting $t_2 = t$ and $t_1 = 0$.
\end{proof}
By Corollary~\ref{Prop_3.4_[Fer18]_clock_coupling} and Corollary~\ref{corollary_clock_coupling_order_TASEP_step}, we can derive the comparison of tagged particle increments over time intervals. For this, we incorporate a shift in the clock coupling.

\begin{lem} \label{lemma_comparison_particle_increments}
	We consider two TASEPs $x(t),\tilde{x}(t)$ and two labels $N,M \in \Z$. Let $x(t)$ and $\tilde{x}(t)$ be coupled by clock coupling with a shift, meaning that $x_{N+n}(t)$ and $\tilde{x}_{M+n}(t)$ share their jump attempts for each $n \in \Z$. For fixed $0 < t_1 < t_2$, we construct the index process $N(t_2 \downarrow \tau)$ with respect to $x(t)$ and $M(t_1 \downarrow \tau)$ with respect to $\tilde{x}(t)$. Let $\tilde{x}_M(t_1) \leq x_N(t_1)$ and assume there is some $\tau \in [0,t_1]$ such that $x_{N(t_2 \downarrow \tau)}(\tau) = \tilde{x}_{M(t_1 \downarrow \tau)}(\tau)$. Then, it holds
	\begin{equation}
	x_N(t_2) - x_N(t_1) \geq \tilde{x}_M(t_2) - \tilde{x}_M(t_1).
	\end{equation}
\end{lem}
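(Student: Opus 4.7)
Set $n := N(t_2 \downarrow \tau)$ and $m := M(t_1 \downarrow \tau)$, so by hypothesis $x_n(\tau) = \tilde{x}_m(\tau) =: p$. The plan is to reduce the comparison of increments to step TASEPs restarted at time $\tau$ from the common site $p$, and then to invoke Corollary~\ref{corollary_clock_coupling_order_TASEP_step}.

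First, using the concatenation formulas from Corollary~\ref{Prop_3.4_[Fer18]_clock_coupling}, identity~\eqref{eq_conca_clock_1} gives the equalities $x_N(t_2) = y_N^{\textrm{step}, n}(\tau; t_2)$ and $\tilde{x}_M(t_1) = \tilde{y}_M^{\textrm{step}, m}(\tau; t_1)$, while identity~\eqref{eq_conca_clock_2} yields the one-sided bounds $x_N(t_1) \leq y_N^{\textrm{step}, n}(\tau; t_1)$ and $\tilde{x}_M(t_2) \leq \tilde{y}_M^{\textrm{step}, m}(\tau; t_2)$. Subtracting, it suffices to establish
\begin{equation*}
y_N^{\textrm{step}, n}(\tau; t_2) - y_N^{\textrm{step}, n}(\tau; t_1) \geq \tilde{y}_M^{\textrm{step}, m}(\tau; t_2) - \tilde{y}_M^{\textrm{step}, m}(\tau; t_1).
\end{equation*}

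Next, I would relabel the two step TASEPs so that their shared jump attempts align label by label. Setting $u_j(t) := y_{N+j}^{\textrm{step}, n}(\tau; t)$ for $j \geq n - N$ and $v_j(t) := \tilde{y}_{M+j}^{\textrm{step}, m}(\tau; t)$ for $j \geq m - M$, the clock of $u_j$ coincides with that of $x_{N+j}$ while the clock of $v_j$ coincides with that of $\tilde{x}_{M+j}$, which agrees with that of $x_{N+j}$ by the shifted clock coupling. Hence $u$ and $v$ are clock-coupled in the standard (unshifted) label-by-label sense. Both are step TASEPs starting at time $\tau$ with rightmost particle at $p$; only their lower label bounds differ, being $n - N$ for $u$ and $m - M$ for $v$. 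The target inequality reads $u_0(t_2) - u_0(t_1) \geq v_0(t_2) - v_0(t_1)$.

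The main point is the case analysis. If $n - N \geq m - M$, then $u$ has (weakly) fewer particles than $v$, and Corollary~\ref{corollary_clock_coupling_order_TASEP_step} applied with $u$ in the role of $x$ and $v$ in the role of $\tilde{x}$ gives exactly the desired inequality. The subtle point is to rule out the opposite case $n - N < m - M$: Corollary~\ref{corollary_clock_coupling_order_TASEP_step} with the roles swapped in that scenario would yield $v_0(t) - u_0(t) \geq (m-M) - (n-N) > 0$ for all $t \geq \tau$, and evaluating at $t = t_1$ gives $\tilde{x}_M(t_1) = v_0(t_1) > u_0(t_1) \geq x_N(t_1)$, contradicting the assumption $\tilde{x}_M(t_1) \leq x_N(t_1)$. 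This case exclusion is precisely where the assumption on the backwards paths touching translates into a usable ordering of labels, and is the only step I expect to require care; the rest is bookkeeping of label shifts and clocks.
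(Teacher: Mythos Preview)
Your proof is correct and follows essentially the same route as the paper. The only cosmetic difference is that the paper takes without loss of generality $N=M$ (absorbing the shift into the labelling at the outset), whereas you carry the shift through and remove it via the relabelling $u_j,v_j$; in both cases the crux is the same case analysis on $N(t_2\downarrow\tau)-N$ versus $M(t_1\downarrow\tau)-M$, with the bad case excluded by \eqref{eq1_corollary_clock_coupling_TASEP_step} of Corollary~\ref{corollary_clock_coupling_order_TASEP_step} producing $\tilde{x}_M(t_1)>x_N(t_1)$ and hence a contradiction.
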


\begin{figure}[t!]
	\centering
	\includegraphics[width=\textwidth]{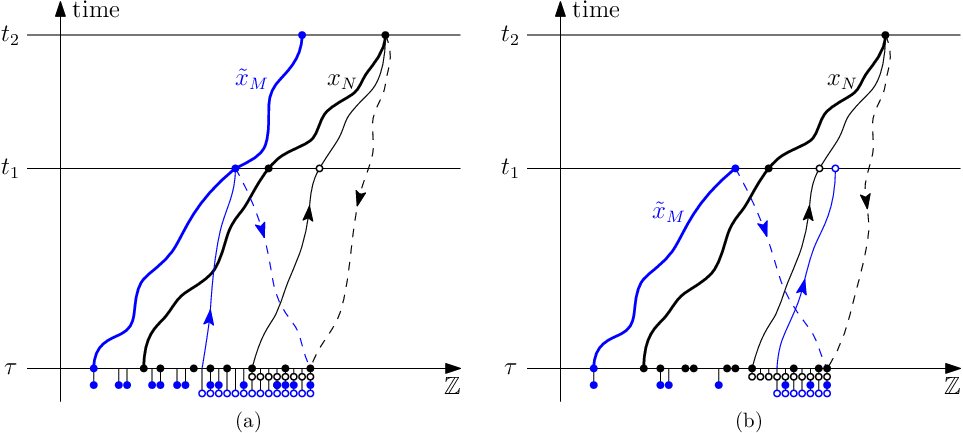}
	\caption{The thick solid lines are the evolution of $x_N$ and $\tilde{x}_M$ and the dashed lines depict their backwards paths. The thin solid lines are the evolution of $y_N^{\textrm{step}, N(t_2 \downarrow \tau)}(\tau;t)$ respectively $y_M^{\textrm{step}, M(t_1 \downarrow \tau)}(\tau;t)$. The solid dots are the particle configurations at time $\tau$ and the empty dots are the configurations after resetting to the step initial condition at time $\tau$. Picture (a) corresponds to $M(t_1 \downarrow \tau) \leq N(t_2 \downarrow \tau)$ (or $N - N(t_2 \downarrow \tau) \leq M - M(t_1 \downarrow \tau)$ if $M \neq N$). Picture (b) shows the case $M(t_1 \downarrow \tau) > N(t_2 \downarrow \tau)$. Here, the contradiction is that the blue empty dot at time $t_1$ should be at position $\tilde{x}_M(t_1) \leq x_N(t_1)$, but is also strictly to the right of the black empty dot at time $t_1$.}
	\label{figure_comparison_particle_increments}
\end{figure}

\begin{proof}
	We provide an illustration of our arguments in Figure~\ref{figure_comparison_particle_increments}. Set \mbox{$x^* = x_{N(t_2 \downarrow \tau)}(\tau) = \tilde{x}_{M(t_1 \downarrow \tau)}(\tau)$}. Without loss of generality, we assume $N=M$. Corollary~\ref{Prop_3.4_[Fer18]_clock_coupling} yields
	\begin{equation}
\begin{aligned}
	& x_N(t_2) = y_N^{\textrm{step}, N(t_2 \downarrow \tau)}(\tau;t_2), \ x_N(t_1) \leq y_N^{\textrm{step}, N(t_2 \downarrow \tau)}(\tau;t_1), \\
	& \tilde{x}_M(t_1) = y_M^{\textrm{step}, M(t_1 \downarrow \tau)}(\tau;t_1), \ \tilde{x}_M(t_2) \leq y_M^{\textrm{step}, M(t_1 \downarrow \tau)}(\tau;t_2) \label{eq_pf_corollary_order_increments_concatenation}
\end{aligned}
	\end{equation}
	with all processes coupled by clock coupling.
	Assume for a moment that
	\begin{equation}
	M(t_1 \downarrow \tau) \leq N(t_2 \downarrow \tau). \label{eq_pf_lemma_comparison_particle_increments}
	\end{equation}
	Then, \eqref{eq_pf_corollary_order_increments_concatenation} and Corollary~\ref{corollary_clock_coupling_order_TASEP_step} applied to $y_N^{\textrm{step}, N(t_2 \downarrow \tau)}(\tau;t)$, $ y_M^{\textrm{step}, M(t_1 \downarrow \tau)}(\tau;t)$ with $a = x^*$, $ m = N(t_2 \downarrow \tau)$, $ \tilde{m} = M(t_1 \downarrow \tau)$ give
	\begin{equation}\label{eq2.26}
\begin{aligned}
	x_N(t_2) - x_N(t_1) &\geq  y_N^{\textrm{step}, N(t_2 \downarrow \tau)}(\tau;t_2) - y_N^{\textrm{step}, N(t_2 \downarrow \tau)}(\tau;t_1) \\
	&\geq  y_M^{\textrm{step}, M(t_1 \downarrow \tau)}(\tau;t_2) - y_M^{\textrm{step}, M(t_1 \downarrow \tau)}(\tau;t_1) \\
	&\geq  \tilde{x}_M(t_2) - \tilde{x}_M(t_1).
\end{aligned}
	\end{equation}
	It remains to prove \eqref{eq_pf_lemma_comparison_particle_increments}. Suppose it did not hold true. Then, by \eqref{eq1_corollary_clock_coupling_TASEP_step} of Corollary~\ref{corollary_clock_coupling_order_TASEP_step}, we would have $x_N(t_1) \leq y_N^{\textrm{step}, N(t_2 \downarrow \tau)}(\tau;t_1) < y_M^{\textrm{step}, M(t_1 \downarrow \tau)}(\tau;t_1) = \tilde{x}_M(t_1)$, which contradicts the assumption $\tilde{x}_M(t_1) \leq x_N(t_1)$.
\end{proof}
In~\cite{BBF21}, this was the statement of Proposition~2.2 (assuming basic coupling). The proof however contains a small mistake, since under basic coupling the second inequality in \eqref{eq2.26} holds true only in law and not pathwise. This is the reason why we consider the clock coupling instead. The comparison results of~\cite{BBF21} are still correct once we use this new coupling, with appropriate change of notations and adapting some of the proofs, which we do below.

An important observation is that, given a family of times $t_1 < t_2$ in an interval $[t,T]$, it is enough to require the conditions of Lemma~\ref{lemma_comparison_particle_increments} for $t, T$.

\begin{lem} \label{lemma_comparison_increments_conditions_only_on_t_T}
	Suppose the conditions of Lemma~\ref{lemma_comparison_particle_increments} are met for times $0 \leq t < T$. Then, for all times $t \leq t_1 < t_2 \leq T$, we likewise have $\tilde{x}_M(t_1) \leq x_N(t_1)$ and there is some $\tau \in [0,t_1]$ such that $x_{N(t_2 \downarrow \tau)}(\tau) = \tilde{x}_{M(t_1 \downarrow \tau)}(\tau)$.
\end{lem}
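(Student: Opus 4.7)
The plan is to verify both conditions of Lemma~\ref{lemma_comparison_particle_increments} for the pair $(t_1,t_2)$ from those assumed for $(t,T)$. The starting observation is the monotonicity of backwards indices with respect to the initial time: for any $\tau \leq t_2$ one has $N(t_2\downarrow\tau) \geq N(T\downarrow\tau)$ (and similarly $M(t_1\downarrow\tau)\geq M(t\downarrow\tau)$ for $\tau \leq t_1$), since at $\tau=t_2$ the index $N(t_2\downarrow t_2)=N \geq N(T\downarrow t_2)$, and once the indices of two backwards paths in the same TASEP coincide, they coincide for all earlier times (the backward evolution being deterministic given the current tracked label).

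I would first treat the meeting condition (condition~(b) of Lemma~\ref{lemma_comparison_particle_increments}) for the pair $(t_1,t_2)$. The natural candidate for the meeting time is the $\tau^*\in [0,t]\subseteq[0,t_1]$ supplied by the hypothesis, so one needs $x_{N(t_2\downarrow\tau^*)}(\tau^*) = \tilde{x}_{M(t_1\downarrow\tau^*)}(\tau^*)$. The approach is to upgrade the monotonicity inequalities into equalities at $\tau^*$, i.e.\ to show $N(t_2\downarrow\tau^*) = N(T\downarrow\tau^*)$ and $M(t_1\downarrow\tau^*) = M(t\downarrow\tau^*)$. In the easy case $N(T\downarrow t_2)=N$, no decrement of $\pi_x^{(N,T)}$ occurs on $(t_2,T]$, so $\pi_x^{(N,t_2)}$ and $\pi_x^{(N,T)}$ satisfy the same backward dynamics on $[0,t_2]$ starting from the same label and coincide throughout; analogously for the $\tilde{x}$-paths. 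In the general case one argues that, should $\pi_x^{(N,t_2)}$ and $\pi_x^{(N,T)}$ be at distinct labels at $\tau^*$, the merging property forces an alternative meeting time $\tau \in [0,\tau^*]$ to exist, obtained by tracking the first time (going backwards) at which the two paths share a label. The almost-sure absence of simultaneous Poisson events means each discrete transition of a backwards path is either a jump or a suppressed-jump event of a unique particle, which is what permits this case analysis.

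For the ordering condition $\tilde{x}_M(t_1) \leq x_N(t_1)$, I would apply Lemma~\ref{lemma_comparison_particle_increments} to the pair $(t,t_1)$, whose first hypothesis $\tilde{x}_M(t)\leq x_N(t)$ is immediate and whose meeting hypothesis is provided by the argument just outlined (applied with $t_2$ replaced by $t_1$). The conclusion $x_N(t_1)-x_N(t) \geq \tilde{x}_M(t_1)-\tilde{x}_M(t)$, together with $\tilde{x}_M(t)\leq x_N(t)$, gives $\tilde{x}_M(t_1)\leq x_N(t_1)$.

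The main obstacle is the detailed bookkeeping of the merging of the backwards paths: showing that the additional suppressed-jump events on $(t_2,T]$ (respectively $(t_1,t]$) cannot push the paths $\pi_x^{(N,t_2)}$ and $\pi_x^{(N,T)}$ (respectively $\pi_{\tilde{x}}^{(M,t_1)}$ and $\pi_{\tilde{x}}^{(M,t)}$) so far apart in label that their positions at $\tau^*$ can no longer be reconciled with the hypothesis equality $x_{N(T\downarrow\tau^*)}(\tau^*) = \tilde{x}_{M(t\downarrow\tau^*)}(\tau^*)$. The clock coupling with shift plays an auxiliary role here by linking the suppressed-jump times of corresponding particles across the two TASEPs.
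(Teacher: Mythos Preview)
Your overall strategy—deriving $\tilde{x}_M(t_1)\leq x_N(t_1)$ by applying Lemma~\ref{lemma_comparison_particle_increments} to the pair $(t,t_1)$ and then checking the meeting hypothesis for $(t_1,t_2)$—is the right structure, and is what the argument from Lemma~2.6 of~\cite{BBF21} does. But two things in your execution do not work.

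First, the monotonicity you state for the $M$-indices has the wrong sign. Since $t\leq t_1$, the merging argument yields $M(t\downarrow\tau)\geq M(t_1\downarrow\tau)$ for $\tau\leq t$ (the backward process started at the \emph{earlier} time carries the larger label), i.e.\ $\tilde{x}_{M(t_1\downarrow\tau)}(\tau)\geq \tilde{x}_{M(t\downarrow\tau)}(\tau)$. This sign is exactly what the proof needs.

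Second, your plan for the meeting condition—forcing $N(t_2\downarrow\tau^*)=N(T\downarrow\tau^*)$ and $M(t_1\downarrow\tau^*)=M(t\downarrow\tau^*)$ by coalescing the two backward paths inside each TASEP—does not go through: those same-TASEP paths need not merge anywhere on $[0,\tau^*]$ (e.g.\ if neither tracked particle is ever blocked there), so the obstacle you name at the end is genuine and is not resolved by your bookkeeping. The correct mechanism is a \emph{crossing} argument between the two cross-TASEP paths themselves. With the corrected $M$-monotonicity, at $\tau^*$ one gets
\[
x_{N(t_2\downarrow\tau^*)}(\tau^*)\ \leq\ x_{N(T\downarrow\tau^*)}(\tau^*)\ =\ \tilde{x}_{M(t\downarrow\tau^*)}(\tau^*)\ \leq\ \tilde{x}_{M(t_1\downarrow\tau^*)}(\tau^*),
\]
whereas at the upper endpoint (time $t$ for the pair $(t,t_1)$, and then time $t_1$ for $(t_1,t_2)$ once the order is established) the reverse inequality holds, since $x_{N(t_2\downarrow t_1)}(t_1)\geq x_N(t_1)\geq\tilde{x}_M(t_1)$. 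Because backward paths a.s.\ have unit steps and at each instant at most one clock rings, the integer-valued position difference cannot skip zero, so a meeting time in $[\tau^*,t_1]$ exists. This replaces your merging attempt and is the content of the argument the paper cites.
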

\begin{proof}
	Given Lemma~\ref{lemma_comparison_particle_increments}, this follows exactly as Lemma 2.6 of~\cite{BBF21}.
\end{proof}

 The comparison of particle distances at a fixed time is captured in Lemma~\ref{lemma_order_at_a_time_intersection_backwards_indices}. Its proof is similar to the one of Lemma 4.6 of~\cite{BF22}. However, in~\cite{BF22}, the result is formulated in the context of height functions and basic coupling instead of the framework of particle positions and clock coupling.
\begin{lem} \label{lemma_order_at_a_time_intersection_backwards_indices}
	We consider two TASEPs $x(t)$ and $\tilde{x}(t)$ coupled by clock coupling as well as two labels $M < \tilde{M}$. For a fixed time $t > 0$, we construct the index process $M(t \downarrow \tau)$ via the evolution of $x(t)$ and the index process $\tilde{M}( t \downarrow \tau)$ via the evolution of $\tilde{x}(t)$. Suppose there is some $\tau \in [0,t]$ such that $M(t \downarrow \tau) = \tilde{M}( t \downarrow \tau)$. Then, it holds
	\begin{equation} x_M(t) - x_{\tilde{M}}(t) \geq \tilde{x}_M(t) - \tilde{x}_{\tilde{M}}(t). \label{ineq_comparison_lemma} \end{equation}
\end{lem}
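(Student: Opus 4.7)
The approach is to use the concatenation identities from Corollary~\ref{Prop_3.4_[Fer18]_clock_coupling} at the intersection time $\tau$, and then to observe that the resulting auxiliary step-initial-condition processes built from $x$ and from $\tilde x$ are exact spatial translates of one another under the clock coupling. Set $m^\ast := M(t\downarrow\tau) = \tilde M(t\downarrow\tau)$; since the backwards index can only decrease when one runs backwards in time, $m^\ast \leq M < \tilde M$.

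Apply Corollary~\ref{Prop_3.4_[Fer18]_clock_coupling} to $x(t)$: identity~\eqref{eq_conca_clock_1} with $N=M$ gives $x_M(t) = y_M^{\textrm{step},\,m^\ast}(\tau;t)$, while~\eqref{eq_conca_clock_2} with $N=\tilde M$ evaluated at the admissible index $m = m^\ast \leq \tilde M$ yields $x_{\tilde M}(t) \leq y_{\tilde M}^{\textrm{step},\,m^\ast}(\tau;t)$. Applying the same corollary to $\tilde x(t)$ with the roles of $M$ and $\tilde M$ swapped yields $\tilde x_{\tilde M}(t) = \tilde y_{\tilde M}^{\textrm{step},\,m^\ast}(\tau;t)$ and $\tilde x_M(t) \leq \tilde y_M^{\textrm{step},\,m^\ast}(\tau;t)$, where $\tilde y^{\textrm{step},m^\ast}$ denotes the auxiliary process built from $\tilde x$ in the same way. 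Subtracting pairwise one gets
\begin{equation}
\begin{aligned}
x_M(t) - x_{\tilde M}(t) &\geq y_M^{\textrm{step},m^\ast}(\tau;t) - y_{\tilde M}^{\textrm{step},m^\ast}(\tau;t),\\
\tilde x_M(t) - \tilde x_{\tilde M}(t) &\leq \tilde y_M^{\textrm{step},m^\ast}(\tau;t) - \tilde y_{\tilde M}^{\textrm{step},m^\ast}(\tau;t).
\end{aligned}
\end{equation}

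The decisive step, which I expect to be the only delicate part, is that the two right-hand sides are in fact equal. Both $y^{\textrm{step},m^\ast}(\tau;\cdot)$ and $\tilde y^{\textrm{step},m^\ast}(\tau;\cdot)$ are TASEPs started from step configurations on the labels $\geq m^\ast$, differing only by the global shift $\Delta := x_{m^\ast}(\tau) - \tilde x_{m^\ast}(\tau)$ of the rightmost particle. Since by definition of clock coupling (transferred from the coupling between $x$ and $\tilde x$) the Poisson process driving the jump attempts of the label-$n$ particle is common to both auxiliary processes, translation invariance of TASEP dynamics when clocks are attached to labels yields $y_n^{\textrm{step},m^\ast}(\tau;s) = \tilde y_n^{\textrm{step},m^\ast}(\tau;s) + \Delta$ for every $n \geq m^\ast$ and $s \geq \tau$, almost surely. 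The shift cancels in the differences, so chaining the two displayed inequalities produces exactly~\eqref{ineq_comparison_lemma}. This translation-invariance step is precisely what would fail under basic coupling, where clocks are attached to sites rather than labels, which is the reason the lemma is stated under clock coupling; if one wishes to avoid the direct argument, the same gap identity can be obtained from Lemma~\ref{lemma_clock_coupling_particle_distances} applied to both orderings of the two auxiliary processes, whose consecutive gaps coincide at the reference time~$\tau$.
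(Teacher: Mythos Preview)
Your proof is correct and follows essentially the same approach as the paper's: both apply Corollary~\ref{Prop_3.4_[Fer18]_clock_coupling} at the intersection time to obtain the four relations in~\eqref{pf_eq_lemma_particle_interdistances_concatenation}, and then use that under clock coupling the two auxiliary step processes are spatial translates of each other so that their particle gaps coincide. Your additional remarks on why basic coupling would fail and on the alternative via Lemma~\ref{lemma_clock_coupling_particle_distances} are accurate expository embellishments, not deviations from the paper's argument.
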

\smallskip
\begin{proof}
	It holds $\tilde{M}( t \downarrow \tau) = M(t \downarrow \tau) \leq M < \tilde{M}$. By Corollary~\ref{Prop_3.4_[Fer18]_clock_coupling}, we obtain:
	\begin{equation}
\begin{aligned}
	&x_M(t) = y_M^{\textrm{step},M(t \downarrow \tau)}(\tau;t), \ x_{\tilde{M}}(t) \leq y_{\tilde{M}}^{\textrm{step},M(t \downarrow \tau)}(\tau;t), \\
	&\tilde{x}_M(t) \leq y_M^{\textrm{step}, \tilde{M}(t \downarrow \tau)}(\tau;t), \ \tilde{x}_{\tilde{M}}(t) = y_{\tilde{M}}^{\textrm{step}, \tilde{M}(t \downarrow \tau)}(\tau;t). \label{pf_eq_lemma_particle_interdistances_concatenation}
\end{aligned}
	\end{equation}
	Each pair of these processes is coupled by clock coupling. In particular,
	\begin{equation}y^{\textrm{step},M(t \downarrow \tau)}_{M(t \downarrow \tau)+n}(\tau;\cdot) \textrm{ and } y^{\textrm{step}, \tilde{M}(t \downarrow \tau)}_{\tilde{M}(t \downarrow \tau) +n}(\tau;\cdot) \end{equation} share their clocks for each $n \geq 0$. Hence, the only difference between the processes $y^{\textrm{step},M(t \downarrow \tau)}(\tau;\cdot)$ and $y^{\textrm{step},\tilde{M}(t \downarrow \tau)}(\tau;\cdot)$ is the initial position of their rightmost particle. The latter does not influence the interdistances of particles in the respective processes. Thus, we obtain
	\begin{equation}
	y_M^{\textrm{step},M(t \downarrow \tau)}(\tau;t) - y_{\tilde{M}}^{\textrm{step},M(t \downarrow \tau)}(\tau;t) = y_M^{\textrm{step}, \tilde{M}(t \downarrow \tau)}(\tau;t) - y_{\tilde{M}}^{\textrm{step}, \tilde{M}(t \downarrow \tau)}(\tau;t).
	\end{equation}
	Together with \eqref{pf_eq_lemma_particle_interdistances_concatenation}, this implies \eqref{ineq_comparison_lemma}.
\end{proof}

\subsection{Asymptotic results} \label{section_2.2}
Given the finite time results and the estimates collected in Appendix~\ref{appendix_estimates}, asymptotic comparison results can be proven for TASEP with step initial condition and stationary TASEP. We always label the particles in a stationary TASEP such that particles with labels in $\N$ start weakly to the left of the origin, while particles with non-positive labels start strictly to the right of it.

\begin{prop} \label{prop_comparison_increments_of_particles}
	Let $x(t)$ denote a TASEP with step initial condition, set \mbox{$N = \gamma T$} for $\gamma \in (0,1)$ and $t = T - \varkappa T^{2/3}$ with $\varkappa$ in a bounded subset of $\R$. Further, define $\rho_0 = \sqrt{ \tfrac{\gamma T}{t}}$ and $\rho_{\pm} = \rho_0 \pm \kappa t^{-1/3}$ for some $\kappa > 0$ with $\kappa = o(T^{1/3})$. Set $M = \rho_+^2 t - \tfrac{3}{2} \kappa \rho_+ t^{2/3}$ and $P = \rho_-^2 t + \tfrac{3}{2} \kappa \rho_- t^{2/3}$. Consider the stationary TASEPs $x^{\rho_+}(t)$ with density $\rho_+$ and $x^{\rho_-}(t)$ with density $\rho_-$, coupled with $x(t)$ by clock coupling \emph{with a shift}: for each $n \in \mathbb{Z}$, $x_{N+n}, x^{\rho_+}_{M+n}$ and $x^{\rho_-}_{P+n}$ share the same jump attempts.

	Then, with a probability of at least $1 - C e^{-c \kappa}$, it holds for all times $T$ large enough:
	\begin{equation}
	\forall t \leq t_1 < t_2 \leq T: \ x_M^{\rho_+}(t_2) - x_M^{\rho_+}(t_1) \leq x_N(t_2) - x_N(t_1) \leq x_P^{\rho_-}(t_2) - x_P^{\rho_-}(t_1) .
	\end{equation}
	The constants $C, c > 0$ can be chosen uniformly for large times $T$, for the different densities that appear and for $\gamma$ in a closed subset of $(0,1)$.
\end{prop}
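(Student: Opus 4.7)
The plan is to sandwich the increments of $x_N$ between those of the two stationary processes by applying Lemma~\ref{lemma_comparison_particle_increments} in each direction. By Lemma~\ref{lemma_comparison_increments_conditions_only_on_t_T}, for each of the two sandwich inequalities it suffices to verify the two hypotheses of Lemma~\ref{lemma_comparison_particle_increments} only at the extreme endpoints $(t_1,t_2)=(t,T)$. For the upper bound I identify the ``$x$'' of the lemma with $x^{\rho_-}$ at label $P$ and its ``$\tilde x$'' with the step-IC process $x$ at label $N$; the two hypotheses to verify then read
\begin{equation*}
\text{(i)}\ x_N(t) \leq x^{\rho_-}_P(t), \qquad \text{(ii)}\ \exists\,\tau\in[0,t]:\ x_{N(t\downarrow\tau)}(\tau) = x^{\rho_-}_{P(T\downarrow\tau)}(\tau).
\end{equation*}

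For (i), the choice of the label $P = \rho_-^2 t + \tfrac{3}{2}\kappa \rho_- t^{2/3}$ together with $\rho_- = \rho_0 - \kappa t^{-1/3}$ is tailored precisely so that the law-of-large-numbers positions satisfy $\mathbb{E} x^{\rho_-}_P(t) - \mathbb{E} x_N(t) \sim \kappa^2 t^{1/3}$ at leading order in $\kappa$. Combining the one-point upper-tail bound for $x_N(t)$ with the lower-tail bound for $x^{\rho_-}_P(t)$ from the Appendix, both of which concentrate on the $t^{1/3}$ scale with stretched-exponential-in-$\kappa$ decay, yields (i) outside an event of probability $\le C e^{-c\kappa}$.

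Condition (ii) I regard as the main obstacle, and would handle by an intermediate-value argument. At $\tau=t$ the ordering (i) rewrites as $x_{N(t\downarrow t)}(t) \leq x^{\rho_-}_{P(T\downarrow t)}(t)$, so the stationary backwards path is (weakly) to the right of the step-IC one at $\tau=t$. At $\tau=0$ a separate one-point calculation gives the opposite ordering up to an exceptional event of probability $\le C e^{-c\kappa}$: the step-IC backwards path ends at $1-N(t\downarrow 0)$, which localizes around a deterministic point with $t^{2/3}$-scale fluctuations (via the Appendix moderate-deviation bounds, together with the structural content of Lemma~\ref{lemma_alwaysstepparticle} and Lemma~\ref{lemma_prob_1_ordering_backwards_paths}), whereas the stationary backwards path of $x^{\rho_-}_P$ localizes around its characteristic line of slope governed by $\rho_-$, and the shift built into $P$ places it strictly to the left by a margin of order $\kappa t^{2/3}$. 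Since both backwards paths are piecewise constant in time with unit-sized jumps in space, the reversal of their spatial ordering between $\tau=0$ and $\tau=t$ forces a meeting time $\tau^{*} \in [0,t]$, delivering (ii).

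The lower bound $x_N(t_2)-x_N(t_1)\geq x^{\rho_+}_M(t_2)-x^{\rho_+}_M(t_1)$ is obtained by the symmetric argument, now taking the ``$x$'' of Lemma~\ref{lemma_comparison_particle_increments} to be the step-IC process at label $N$ and ``$\tilde x$'' to be $x^{\rho_+}$ at label $M$; the same LLN computation with $(\rho_+,M)$ in place of $(\rho_-,P)$ and the analogous tail bounds produce both hypotheses. A union bound over the two inequalities and the finitely many control events delivers the claimed probability $1 - C e^{-c\kappa}$, with uniformity in $T$, in $\gamma$ on closed subsets of $(0,1)$ and in the densities $\rho_{\pm}$ inherited from the Appendix estimates (which remain uniform as long as the densities are bounded away from $0$ and $1$).
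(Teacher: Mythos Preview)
Your overall strategy is correct and matches the paper's approach: reduce via Lemma~\ref{lemma_comparison_particle_increments} and Lemma~\ref{lemma_comparison_increments_conditions_only_on_t_T} to verifying the two hypotheses at $(t_1,t_2)=(t,T)$, then establish (i) by one-point tail bounds and (ii) by showing the two backwards paths reverse order between $\tau=0$ and $\tau=t$. This is exactly the template the paper points to (it defers to the proof of Proposition~\ref{prop_on_particle_distances_stationary} and to~\cite{BBF21}).

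Two inaccuracies to fix. First, the law-of-large-numbers gap in (i) is not $\kappa^2 t^{1/3}$: expanding $\rho_-=\rho_0-\kappa t^{-1/3}$ in the centering from Lemma~\ref{Lemma A.3} with $w=-\tfrac34\kappa\chi^{-1/3}$ gives $x^{\rho_-}_P(t)\approx(1-2\rho_0)t+\tfrac12\kappa t^{2/3}$, so the gap to $x_N(t)\approx(1-2\rho_0)t$ is of order $\kappa t^{2/3}$. This only strengthens your conclusion. Second, for the localisation of the step-IC backwards path at $\tau=0$ you should invoke Lemma~\ref{Lemma 2.8} (which gives $|x_{N(t\downarrow 0)}(0)|\le \kappa t^{1/3}$ with probability $\ge 1-Ce^{-c\kappa}$), not Lemma~\ref{lemma_alwaysstepparticle} or Lemma~\ref{lemma_prob_1_ordering_backwards_paths}; those two are basic-coupling statements and play no role here. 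The stationary side at $\tau=0$ is handled, as in the proof of Proposition~\ref{prop_on_particle_distances_stationary}, by combining Lemma~\ref{Lemma A.3} and Lemma~\ref{Lemma 2.9} (applied with $T$ in place of $t$) to obtain $x^{\rho_-}_{P(T\downarrow 0)}(0)\le -\tfrac12\kappa T^{2/3}$ with probability $\ge 1-Ce^{-c\kappa}$, which is well to the left of $x_{N(t\downarrow 0)}(0)$. With these corrections your argument goes through.
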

With help of Lemma~\ref{lemma_comparison_particle_increments}, Lemma~\ref{lemma_comparison_increments_conditions_only_on_t_T} and the estimates in Appendix~\ref{appendix_estimates}, Proposition~\ref{prop_comparison_increments_of_particles} can be proven as described in~\cite{BBF21} for Theorem 2.8 of~\cite{BBF21}. As most details are similar to the proof of Proposition~\ref{prop_on_particle_distances_stationary} and only the latter will be applied in this work, we do not repeat the arguments here.

By Lemma~\ref{lemma_order_at_a_time_intersection_backwards_indices}, we obtain the comparison of particle interdistances in TASEP with step initial condition and stationary TASEP.

\begin{prop} \label{prop_on_particle_distances_stationary}
	Let $x(t)$ denote a TASEP with step initial condition, set \mbox{$N= \gamma T$} for $ \gamma \in (0,1)$ and $t = T - \varkappa T^{2/3}$ with $\varkappa$ in a bounded subset of $\R$. Define $\rho_0 = \sqrt{ \tfrac{\gamma T}{t}}$ and $\rho_{\pm} = \rho_0 \pm \kappa t^{-1/3}$ for some $\kappa > 0$ with $\kappa = o(T^{1/3})$. Consider the stationary TASEPs $x^{\rho_+}(t)$ with density $\rho_+$ and $x^{\rho_-}(t)$ with density $\rho_-$, coupled with $x(t)$ by clock coupling. Set $M = \rho_+^2 t - \tfrac{3}{2} \kappa \rho_+ t^{2/3}$ and $P = \rho_-^2 t + \tfrac{3}{2} \kappa \rho_- t^{2/3}$.
 Then, with a probability of at least $1 - C e^{-c \kappa}$, it holds for all times $T$ large enough:
	\begin{equation} x_N(t) - x_M(t) \geq x_N^{\rho_+}(t) - x_M^{\rho_+}(t) \textrm{ and } x_P(t) - x_N(t) \leq x_P^{\rho_-}(t) - x_N^{\rho_-}(t).\end{equation}
	The constants $C,c > 0$ can be chosen uniformly for large times $T$, for the different densities that appear and for $\gamma$ in a closed subset of $(0,1)$.
\end{prop}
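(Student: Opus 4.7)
I apply Lemma~\ref{lemma_order_at_a_time_intersection_backwards_indices} to each of the two inequalities. For the first one, I take $x$ and $x^{\rho_+}$ as the two clock-coupled processes, with labels $N$ and $M$. A short Taylor expansion yields
\begin{equation}
M - N = \tfrac{1}{2}\rho_0 \kappa t^{2/3} + O(\kappa^2 t^{1/3}),
\end{equation}
so $N < M$ for $T$ large. The conclusion of that lemma then gives the first inequality, provided I can show that, with probability at least $1 - C e^{-c \kappa}$, the two backwards index processes $N(t \downarrow \cdot)$ (built from $x$) and $M(t \downarrow \cdot)$ (built from $x^{\rho_+}$) coincide at some $\tau \in [0, t]$. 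Since both are $\Z$-valued and non-increasing in $\tau$ (as $\tau$ decreases from $t$ to $0$) with unit decrements, and they start at $\tau = t$ with $N < M$, a discrete intermediate-value argument reduces the task to establishing
\begin{equation}
\Pb\bigl( M(t \downarrow 0) \leq N(t \downarrow 0) \bigr) \geq 1 - C e^{-c \kappa}.
\end{equation}

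Heuristically, the step-TASEP characteristic through $(x_N(t), t)$ emanates from the origin, so the backwards path of $x_N$ arrives at time $0$ near position $0$; since $x_n(0) = 1 - n$, this gives $N(t \downarrow 0) = 1 + O(T^{1/3})$. The value of $M$ is tuned precisely so that the characteristic of $x^{\rho_+}_M$ (of speed $1 - 2\rho_+$) reaches time $0$ macroscopically at position $\tfrac{3}{2}\kappa t^{2/3}$, which in a density-$\rho_+$ stationary environment corresponds to a label $M(t \downarrow 0) \approx -\tfrac{3}{2}\rho_+ \kappa t^{2/3}$. As $\kappa = o(T^{1/3})$, the macroscopic separation of order $\kappa t^{2/3}$ comfortably exceeds the $O(T^{1/3})$ fluctuations. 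To turn this into a rigorous bound I rewrite the events $\{N(t \downarrow 0) \geq 1 + L\}$ and $\{M(t \downarrow 0) \geq -\tfrac{1}{2}\rho_+ \kappa t^{2/3}\}$, for suitable $L$, as events about the location of the argmin in the variational identity of Corollary~\ref{Prop_3.4_[Fer18]_clock_coupling}, $x_N(t) = \min_{m \leq N}\{ y^{\mathrm{step}, m}_N(0; t) \}$, and its stationary analogue. The one-point upper- and lower-tail estimates collected in Appendix~\ref{appendix_estimates} (in the spirit of those used in~\cite{BBF21} for Theorem~2.8 there) then bound the probability that these argmins lie far from their macroscopic values, yielding the required exponential-in-$\kappa$ decay.

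The second inequality is handled symmetrically: I apply Lemma~\ref{lemma_order_at_a_time_intersection_backwards_indices} with $x^{\rho_-}$ as the first process, $x$ as the second, and labels $P < N$ (checked by the analogous expansion); the condition to verify now is $P(t \downarrow 0) \geq N(t \downarrow 0)$, and the macroscopic computation places $P(t \downarrow 0)$ at $+\tfrac{3}{2}\rho_- \kappa t^{2/3}$, so the same appendix estimates apply. The main obstacle is extracting an exponential (rather than polynomial) tail in $\kappa$, with constants uniform in $\rho_\pm$ and in $\gamma$ over compact subsets of $(0,1)$: after the reduction to the variational formulas, what we are controlling is fluctuations \emph{along} the characteristic, whose natural scale is $t^{1/3}$, while the signal we need to see is of size $\Theta(\kappa \cdot t^{1/3})$; this is precisely the moderate-deviation regime covered by the appendix bounds, from which the uniformity is directly inherited.
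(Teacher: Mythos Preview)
Your approach is correct and essentially identical to the paper's: reduce to Lemma~\ref{lemma_order_at_a_time_intersection_backwards_indices} and verify that the two backwards index processes cross by controlling their values at $\tau=0$ via the estimates of Appendix~\ref{appendix_estimates}. The paper streamlines your first case by observing that $N(t\downarrow 0)\geq 1$ holds \emph{deterministically} for step initial data (so only $M(t\downarrow 0)\leq 0$ needs a probabilistic estimate), and in the stationary cases it first bounds the \emph{position} $x^{\rho_\pm}_{\,\cdot(t\downarrow 0)}(0)$ by combining Lemmas~\ref{Lemma A.3} and~\ref{Lemma 2.9}, then converts this to a bound on the \emph{label} through a short Binomial concentration on the number of particles in a window of width $\Theta(\kappa t^{2/3})$ --- a step your sketch leaves implicit.
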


\begin{proof}
	First, we bound the particle distances from above by the ones in the stationary TASEP with slightly lower density.

	Lemma~\ref{Lemma 2.8}, Lemma~\ref{Lemma A.3} with $ w = - \tfrac{3}{4} \kappa \chi^{-1/3}$ and Lemma~\ref{Lemma 2.9} yield for all times $T$ large enough:
	\begin{equation}
\Pb(N(t\downarrow 0)> \kappa t^{1/3})= \Pb( |x_{N(t \downarrow 0)}(0)| \geq \kappa t^{1/3}) \leq C e^{-c \kappa} \label{eq_pf_prop_comparison_distances_estimates}
\end{equation}
and \begin{equation}
\begin{aligned}
&	 \Pb(x_P^{\rho_-}(t) \leq (1-2\rho_-) t - \kappa t^{2/3}) \geq 1 - C e^{-c \kappa}, \\
&	 \Pb( x_P^{\rho_-}(t) - x^{\rho_-}_{P(t \downarrow 0)}(0) \leq (1-2 \rho_-) t - \tfrac{1}{2} \kappa t^{2/3}) \leq C e^{-c \kappa}.
\end{aligned}
	\end{equation}
	These bounds imply
	\begin{equation}
	\Pb( x_{P(t \downarrow 0)}^{\rho_-}(0) \geq - \tfrac{1}{2} \kappa t^{2/3})
	\leq C e^{-c \kappa}.
	\end{equation}
	From $x^{\rho_-}_{P(t \downarrow 0)}(0) \leq - \tfrac{1}{2} \kappa t^{2/3}$ with high probability, we now derive that it holds \mbox{$P(t \downarrow 0) \geq \kappa t^{1/3}$} with high probability as well. In doing so, we denote
\begin{equation} Z_ {\kappa,t} := \# \{ \textrm{particles in } x^{\rho_-}(0) \textrm{ at sites in } \{-\tfrac{1}{2} \kappa t^{2/3} , \dots,  - 1\} \} \sim \textrm{Bin}(\tfrac{1}{2} \kappa t^{2/3}, \rho_-). \end{equation} Utilizing the exponential Chebyshev inequality, we find
\begin{equation} \Pb(Z_{\kappa,t} \geq \kappa t^{1/3}) \geq 1 - e^{-c \kappa t^{2/3}}, \label{eq_pf_comp_distances_chebyshev} \end{equation}
leading to
\begin{equation}
\Pb( P(t \downarrow 0) \geq \kappa t^{1/3}) \geq  \Pb \left( Z_{\kappa,t} \geq \kappa t^{1/3}, x_{P(t \downarrow 0)}^{\rho_-}(0) \leq - \tfrac{1}{2} \kappa t^{2/3}\right)
\geq  1 - C e^{-c \kappa}. \label{eq_pf_comp_interdistances_binomial_rv}
\end{equation}
	Together with \eqref{eq_pf_prop_comparison_distances_estimates}, this implies
	\begin{equation} \label{eq3.39}
\begin{aligned}
	\Pb( P(t \downarrow 0) > N(t \downarrow 0)) \geq & \Pb( P(t \downarrow 0) \geq \kappa t^{1/3} > N(t \downarrow 0))
	\geq  1 - C e^{-c \kappa}.
\end{aligned}
	\end{equation}
	As $P < N$, if we have $P(t \downarrow 0) > N(t \downarrow 0)$, then there almost surely exists some $\tau \in [0,t]$ such that $P(t \downarrow \tau) = N(t \downarrow \tau)$. By Lemma~\ref{lemma_order_at_a_time_intersection_backwards_indices}, we get
	\begin{equation} x_P^{\rho_-}(t) - x_N^{\rho_-}(t) \geq x_P(t) - x_N(t)\end{equation} with the probability greater than $1 - C e^{-c \kappa}$.

	The lower bound is proven by similar means. In fact, it is easier to obtain: here, \begin{equation} \Pb(M(t \downarrow 0) \leq 0) = \Pb(x^{\rho_+}_{M(t \downarrow 0)}(0) > 0) \geq 1 - C e^{-c\kappa}\end{equation} already implies $ \Pb(M(t \downarrow 0) < N(t \downarrow 0)) \geq 1 - C e^{-c\kappa}$ since it holds $N(t \downarrow 0) \geq 1$.

	To conclude, we want to point out that the constants in the bounds obtained from Lemma~\ref{Lemma 2.8}, Lemma~\ref{Lemma A.3} and Lemma~\ref{Lemma 2.9} can be chosen uniformly for all $T$ large enough and for $\gamma$ in a closed subset of $(0,1)$. In fact, as we have $\kappa = o(T^{1/3})$ and $\varkappa$ is contained in a bounded subset of $\R$, for large times the appearing densities $\rho_{\pm} = \sqrt{\gamma} + ( \tfrac{1}{2} \varkappa \sqrt{\gamma} \pm \kappa ) t^{-1/3} + \Or(\varkappa^2 t^{-2/3})$ are contained in a closed subset of $(0,1)$ as well. Therefore, the constants can be chosen uniformly for all those densities.
 Also the application of the exponential Chebyshev inequality for \eqref{eq_pf_comp_distances_chebyshev} allows the choice of uniform constants.
\end{proof}

To see that the choice of $\kappa = o(T^{1/3})$ is admissible, refer to Remark~\ref{rem_possible_extensions_auxiliary_lemmata}.

\begin{remark}
In the case of basic coupling, the comparisons in Lemma~\ref{lemma_comparison_particle_increments}, Proposition~\ref{prop_comparison_increments_of_particles} (with $t_1 < t_2$ fixed) and Lemma~\ref{lemma_order_at_a_time_intersection_backwards_indices}, Proposition~\ref{prop_on_particle_distances_stationary} still hold true in law. However, our localization result on backwards paths in TASEP with step initial condition, specifically the proof of Theorem~\ref{thm_midtimeestimate}, requires the simultaneous comparison of particle interdistances at a fixed time for a family of particles.

More importantly, the proof of Lemma~\ref{corollary_4.1_of_BBF22_tightness_weak_conv_Xtilde} (Proposition 2.9 of~\cite{BBF21}) requires the comparison of tagged particle increments for a family of times $t_1 < t_2$. Consequently, results in law would not be sufficient for our purpose and our arguments indeed require the use of clock coupling. With our Proposition~\ref{prop_comparison_increments_of_particles} (instead of Theorem~2.8 of~\cite{BBF21}), the proof of Proposition 2.9 of~\cite{BBF21}, the weak convergence, is amended and does not require any changes.
\end{remark}

\section{Localization results}\label{sectLocal}
Let $x(t)$ denote a TASEP with step initial condition, let $\gamma \in (0,1)$, $N= \gamma T$, and consider the regions
\begin{equation}\label{eq4.1}
\begin{aligned}
{\cal C}^r=&\{(x,t) \in \Z \times [0,T] \ | \ x\leq (1-2\sqrt{\gamma})t + K T^{2/3}\}, \\
{\cal C}^l=&\{(x,t) \in \Z \times [0,T] \ | \ x\geq (1-2\sqrt{\gamma})t - K T^{2/3}\}.
\end{aligned}
\end{equation}
Construct the process $\tilde{x}^r(t)$ with $\tilde{x}^r(0) = x(0)$ by the same Poisson events as $x(t)$ in ${\cal C}^r$ and with $({\cal C}^r)^c$ completely filled by holes. Likewise, denote by $\tilde{x}^l(t)$ the process with $\tilde{x}^l(0) = x(0)$, constructed by the same Poisson events as $x(t)$ in ${\cal C}^l$ and with $({\cal C}^l)^c$ completely filled by particles. \\
The main objective of Section \ref{sectLocal} is to localize the randomness that $x_{N}(T)$ depends upon with help of $\tilde{x}^r_N(T)$ and $\tilde{x}_N^l(T)$:
\begin{prop} \label{prop_localization_backwards_path} There exists an event $E$ such that
\begin{equation}
\Pb(x_N(T)=\tilde{x}^r_N(T)=\tilde{x}^l_N(T)|E)=1
\end{equation}
and, for $T$ sufficiently large,
	\begin{equation}
\Pb(E^c) \leq C e^{-cK},
\end{equation}
where $1 \leq K = \Or(T^\eps)$ with $\eps\in (0, \tfrac{1}{12})$ fixed. The constants $C,c > 0$ can be chosen uniformly for $T$ large enough and for $\gamma$ in a closed subset of $(0,1)$.
\end{prop}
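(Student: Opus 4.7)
The natural candidate is $E := E^r\cap E^l$, where $E^r$ (resp.\ $E^l$) denotes the event that the backwards path $\pi_{N,T}$ is contained in ${\cal C}^r$ (resp.\ ${\cal C}^l$). On $E^r$, Lemma~\ref{lemma_tagged_particle_position_only_depends_on_backwards_path} with ${\cal C}={\cal C}^r$ yields $x_N(T)=\tilde{x}^r_N(T)$; on $E^l$, the same lemma with ${\cal C}={\cal C}^l$ yields $x_N(T)=\tilde{x}^l_N(T)$ (the argument works for either choice of deterministic fill outside the conditioning region, and both fills are actually consistent with the step initial condition at $t=0$). Hence on $E$ all three variables coincide almost surely, and the proposition reduces to the one-sided tail bounds $\Pb((E^r)^c)\le Ce^{-cK}$ and $\Pb((E^l)^c)\le Ce^{-cK}$.

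For the right-side estimate $\Pb((E^r)^c)\le Ce^{-cK}$ I would follow the Basu-Sidoravicius-Sly strategy outlined in the introduction: a mid-time estimate combined with iteration. At time $T/2$, Proposition~\ref{prop_on_particle_distances_stationary} (the comparison of particle interdistances in $x(t)$ with those in a stationary TASEP, made possible by clock coupling) should give $\Pb\bigl(x_{N(T\downarrow T/2)}(T/2) > (1-2\sqrt{\gamma})T/2 + \tfrac{K}{2}T^{2/3}\bigr)\le Ce^{-cK}$. On the complementary event, Lemma~\ref{lemma_prob_1_ordering_backwards_paths} and Lemma~\ref{lemma_alwaysstepparticle} allow me to dominate $\pi_{N,T}$ on $[0,T/2]$ by the backwards path of an auxiliary TASEP with step initial condition restarted at time $T/2$ from the critical point. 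I then reapply the mid-time bound to this auxiliary process at time $T/4$, then $T/8$, and so on. Since the time window halves at each scale while the fluctuation budget $KT^{2/3}$ stays fixed, the tail at the $k$-th step should be bounded by $C\exp(-cK\cdot 2^{2k/3})$, and a geometric union bound over the $\Or(\log T)$ scales yields $\Pb((E^r)^c)\le Ce^{-cK}$ (the logarithmic factor is absorbed by the assumption $K=\Or(T^\eps)$ with $\eps<1/12$).

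For the left-side estimate $\Pb((E^l)^c)\le Ce^{-cK}$, the backwards-path construction in the particle representation only tracks suppressed right-jumps and so is tailored to control rightward deviations; a direct iteration does not give left-side control. I would therefore appeal to the particle-hole duality developed in Section~\ref{sect4.3}: reinterpreting the holes as particles of a dual TASEP exchanges the two fluctuation directions, and the same mid-time-and-iterate scheme applied to the dual process produces $\Pb((E^l)^c)\le Ce^{-cK}$ with constants of the same shape.

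The main obstacle I anticipate is the iteration: one must check that (i) the domination provided by Lemma~\ref{lemma_prob_1_ordering_backwards_paths} propagates cleanly when the reference process is replaced by a fresh step-initial TASEP at each scale, (ii) the constants in Proposition~\ref{prop_on_particle_distances_stationary} can be chosen uniformly in the slightly shifted densities that appear at every iteration (this is why the uniformity statement there over densities in a closed subset of $(0,1)$ matters), and (iii) the accumulated conditioning errors across the $\Or(\log T)$ scales still geometrically sum to $Ce^{-cK}$, which is exactly where the growth restriction $\eps<1/12$ enters.
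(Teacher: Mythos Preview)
Your plan for the right-side estimate is the paper's approach (Proposition~\ref{prop_right_fluctuations_backwards_path}): an iterated mid-time estimate along a dyadic decomposition, using Lemmas~\ref{lemma_prob_1_ordering_backwards_paths} and~\ref{lemma_alwaysstepparticle} to pass to fresh step-initial processes and the stationary comparison for the one-step bound. The bookkeeping is more delicate than you sketch --- the fluctuation budget at level $n$ is not the full $KT^{2/3}$ but $u_nT^{2/3}$ with increments $u_n-u_{n-1}=\tfrac{K}{10}2^{-(n-1)/2}$, and the tail exponent one actually gets is $cK(2^{n-1})^{1/6}$ rather than $cK\,2^{2n/3}$ --- but the strategy is correct.

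The left-side argument has a real gap. You set $E^l=\{\pi_{N,T}\subseteq{\cal C}^l\}$, an event about the \emph{particle} backwards path, and then claim particle-hole duality bounds $\Pb((E^l)^c)$. But running the iteration on the dual process controls the left fluctuations of a \emph{hole} backwards path, which is a different trajectory; nothing in the duality transfers a bound on $x^h_{M(T\downarrow s)}(s)$ into a bound on the left excursion of $x_{N(T\downarrow s)}(s)$. The paper does not try to show $\pi_{N,T}\subseteq{\cal C}^l$ at all. It instead takes
\[
E_{M,{\cal C}^l}=\{x^h_M(T)<x_N(T)\}\cap\bigl\{x^h_{M(T\downarrow t)}(t)\geq(1-2\sqrt{\gamma})t-KT^{2/3}\ \forall\, t\in[0,T]\bigr\}
\]
for a hole label $M\approx(1-\sqrt{\gamma})^2T$, and argues separately that on this event $x_N(T)=\tilde{x}^l_N(T)$: since $x_N(T)$ is determined by the holes with labels $\geq M$ and all of their backwards paths lie inside ${\cal C}^l$, Lemma~\ref{lemma_tagged_particle_position_only_depends_on_backwards_path} applied to the hole process yields $x^h_k(T)=\tilde{x}^{l,h}_k(T)$ for every $k\geq M$, whence $x_N(T)=\tilde{x}^l_N(T)$. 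The final event is $E=E_{{\cal C}^r}\cap E_{M,{\cal C}^l}$, not your $E^r\cap E^l$. Your instinct to invoke duality is right, but the event itself must be changed, and an additional step (why $x_N(T)$ is recoverable from the holes to its right) is needed to close the loop.
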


Proposition~\ref{prop_localization_backwards_path} is proven by controlling the right fluctuations of the backwards path starting at $x_N(T)$ and the left fluctuations of the backwards path of a hole nearby. We start with the first part, as the second is essentially a mirror image of the same problem.

As we will see in the proof of Proposition~\ref{prop_localization_backwards_paths_asympt_indep_part}, Proposition~\ref{prop_localization_backwards_path} can be extended to particle positions on (small) time intervals instead of at fixed times. This is due to an order of backwards paths: for two labels $N,M \in \Z$ with $N \leq M$ and times $0 \leq t < \tilde{t}$, we have
\begin{equation}
x_{M(t \downarrow \tau)}(\tau) \leq x_{N( \tilde{t} \downarrow \tau)}(\tau)\textrm{ for all } \tau \in [0,t].
 \end{equation}

\subsection{Control of fluctuations to the right}
In order to control the right fluctuations of a backwards path in TASEP with step initial condition, we adopt an approach from Section 11 of~\cite{BSS14} that was previously employed for Theorem 4.4 of~\cite{BF20} and Proposition 4.9 of~\cite{BF22}. To our knowledge, it has not been applied in the framework of particle positions before. Its advantage in comparison to other approaches to localization results in this setting, see~\cite{Fer18,FN19}, is that it allows us to obtain the bound in Proposition~\ref{prop_localization_backwards_path} even for constant values of $K$, instead of requiring $K \to \infty$ as $T \to \infty$.

Our notion of backwards paths shows not only similarities but also differences to the backwards geodesics in the cases of LPP models~\cite{BSS14,BF20} and height function representations~\cite{BF22}. In particular, our implementation of the approach of~\cite{BSS14} requires the use of Lemma~\ref{lemma_prob_1_ordering_backwards_paths}. This, however, only gives us control of the right fluctuations of the backwards path. Also, due to the application of Proposition~\ref{prop_on_particle_distances_stationary}, we only get $1$ as exponent in the exponential decay, instead of $2$ respectively $3$~\cite{BF20, BF22}. The next proposition captures the control of right fluctuations of the backwards path.

\begin{prop} \label{prop_right_fluctuations_backwards_path}
	For $ \gamma \in (0,1), N = \gamma T$ and all $T$ sufficiently large, it holds
	\begin{equation} \Pb( x_{N(T \downarrow t)}(t) - (1- 2 \sqrt{\gamma})t > K T^{2/3} \textrm{ for some } t \in [0,T]) \leq C e^{-cK},\end{equation} where $1 \leq K = \Or(T^\eps)$ with $\eps\in (0, \tfrac{1}{12})$ fixed. The constants $C,c > 0$ can be chosen uniformly for $T$ large enough and for $\gamma$ in a closed subset of $(0,1)$.
\end{prop}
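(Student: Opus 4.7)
The plan is to adapt the multi-scale mid-time strategy of Section~11 of~\cite{BSS14} to the particle representation, using two ingredients developed in Section~\ref{sectCoupling}: Lemma~\ref{lemma_prob_1_ordering_backwards_paths}, which dominates the backwards path on a subinterval by a backwards path of an auxiliary step initial condition TASEP restarted at the left endpoint, and Proposition~\ref{prop_on_particle_distances_stationary}, which controls particle interdistances at a fixed time via a nearby stationary TASEP. First, I would introduce dyadic anchor times $t_j^+ = T(1 - 2^{-j})$ and $t_j^- = T\cdot 2^{-j}$ for $j = 1, \ldots, J \sim \log_2 T$, giving a partition of $[0,T]$ into consecutive intervals whose lengths decrease like $T 2^{-j}$ near each endpoint. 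On a subinterval $I$ of length $\ell \sim T 2^{-j}$, the target deviation $KT^{2/3}$ equals $r \ell^{2/3}$ with $r = K (T/\ell)^{2/3} = K \cdot 2^{2j/3}$, which is the natural transversal fluctuation scale for a backwards path on such an interval. The aim is to bound the failure probability on each dyadic level by $C e^{-cK \cdot 2^{2j/3}}$, so that the geometric sum yields the overall bound $C e^{-cK}$.

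Each dyadic step splits into an anchor bound at a boundary time $s_j \in \{t_j^\pm\}$ of the interval and an excursion bound inside it. For the anchor bound, set the target $x_{N(T\downarrow s_j)}(s_j) \leq (1-2\sqrt{\gamma}) s_j + \tfrac{K}{2} T^{2/3}$. The complementary event forces $N(T\downarrow s_j) \leq M$ for a label $M \approx \gamma s_j - \tfrac{K}{2}\sqrt{\gamma}\, T^{2/3}$ obtained by inverting the law of large numbers $x_M(s_j) \approx s_j - 2\sqrt{M s_j}$. Choosing $\varkappa = (T-s_j) T^{-2/3}$ and $\kappa \sim K \cdot 2^{2j/3}$ in Proposition~\ref{prop_on_particle_distances_stationary} so that the associated stationary density $\rho_+ = \sqrt{\gamma T/s_j} + \kappa s_j^{-1/3}$ and label $M$ match, the event translates into an atypical deviation for the stationary TASEP of density $\rho_+$, controlled by the standard exponential tails in the Appendix and giving $Ce^{-cK\cdot 2^{2j/3}}$. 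For the excursion bound, conditional on anchors $x_{N(T\downarrow s_j)}(s_j) \leq x_1$ at both endpoints, Lemma~\ref{lemma_prob_1_ordering_backwards_paths} dominates the backwards path inside by $x_{M^*(s_{j+1}\downarrow \tau)}^{{\rm step}, x_1}(s_j, \tau)$ for an appropriate label $M^*$ matching the target at $s_{j+1}$. The right fluctuation of this auxiliary backwards path around its characteristic line is of order $\ell^{2/3}$ with exponential tails, yielding again $Ce^{-cK\cdot 2^{2j/3}}$ via the step IC TASEP tail estimates of the Appendix.

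The main obstacle is the one-point anchor bound: because $N(T\downarrow s_j)$ is a random label, Tracy-Widom tails for a fixed particle do not apply directly, and the passage through Proposition~\ref{prop_on_particle_distances_stationary} must verify that $\kappa$ stays within the admissible range $\kappa = \Or(T^\sigma)$ for some $\sigma < 1/3$. At the smallest dyadic scale the amplification factor $2^{2J/3}$ gives $\kappa \sim K \cdot 2^{2J/3}$, and matching against the admissible $T^{1/3}$ while still allowing $K$ to grow as a small power of $T$ produces the restriction $\eps < 1/12$ stated in the proposition. Secondary issues: at $t=0$ the backwards path satisfies $x_{N(T\downarrow 0)}(0) \leq 0$ automatically; at $t=T$ the bound follows from a direct Tracy-Widom tail estimate on $x_N(T) - (1-2\sqrt{\gamma})T$; and the interplay between Lemma~\ref{lemma_prob_1_ordering_backwards_paths} (basic coupling) and Proposition~\ref{prop_on_particle_distances_stationary} (clock coupling) causes no conflict because the two are applied to distinct auxiliary processes in separate steps of the argument.
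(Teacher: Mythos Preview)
Your plan has the right high-level architecture (multi-scale decomposition plus Lemma~\ref{lemma_prob_1_ordering_backwards_paths}), but two of its steps are not justified as written, and both trace back to a missing ingredient: a one-point \emph{mid-time estimate} for the backwards path (Theorems~\ref{thm_midtimeestimate} and~\ref{thm-mid-time-estimate-paths} in the paper), which must be proven separately before the dyadic argument can run.

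For the anchor bound you argue that $x_{N(T\downarrow s_j)}(s_j)>(1-2\sqrt{\gamma})s_j+\tfrac{K}{2}T^{2/3}$ forces $N(T\downarrow s_j)\leq M$ and then that this ``translates into an atypical deviation for the stationary TASEP'' via Proposition~\ref{prop_on_particle_distances_stationary}. But that proposition compares particle \emph{interdistances} for fixed labels; it says nothing about the random index $N(T\downarrow s_j)$. To bound $\Pb(N(T\downarrow s_j)\leq M)$ one must pass through the variational identity $x_N(T)=\min_{n\leq N}\{x_n(s_j)+y^{x_n(s_j)}_{N-n+1}(s_j,T)\}$ and show the minimizer does not lie in $\{1,\dots,M\}$; this is precisely the mid-time estimate, whose proof \emph{uses} Proposition~\ref{prop_on_particle_distances_stationary} inside a minimization argument over $n$ that your plan elides. (Note also that with $t=s_j$ the parameter $\varkappa=(T-s_j)T^{-2/3}$ is of order $T^{1/3}$ unless $s_j$ is within $\Or(T^{2/3})$ of $T$, so Proposition~\ref{prop_on_particle_distances_stationary} as stated does not even apply.)

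For the excursion bound you say that, after Lemma~\ref{lemma_prob_1_ordering_backwards_paths}, the right fluctuation of the auxiliary backwards path over the whole subinterval is $\Or(\ell^{2/3})$ ``via the step IC TASEP tail estimates of the Appendix.'' The Appendix contains only one-point tails for $x_M(t)$; controlling $\sup_\tau x^{{\rm step},x_1}_{M^*(t_2\downarrow\tau)}(t_1,\tau)$ over a full subinterval is exactly Proposition~\ref{prop_right_fluctuations_backwards_path} on a shorter time window, so the step is circular. The paper avoids this by the nested BSS dyadic scheme: on the event $B_{n,k}\cap A_{n-1}$ it only needs the auxiliary backwards path at the \emph{midpoint} $t=\tfrac{t_1+t_2}{2}$, which is handled by the mid-time estimate (Proposition~\ref{prop_singleboundB_n,kA_n-1}); the nesting over levels $n=1,\dots,m$ then propagates control to all dyadic points, and the finest scale $2^{-m}T\leq T^{1/2}$ is closed by a crude Poisson bound. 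Your single-level partition with intervals shrinking near the endpoints lacks this nesting, so you genuinely need a uniform-in-$\tau$ bound on each subinterval, which is not available a priori. Incidentally, the restriction $\eps<\tfrac{1}{12}$ in the paper comes from feeding $K'=\tfrac{K}{20}(2^{n-1})^{1/6}$ into the mid-time estimate on an interval of length $2^{-n+1}T\geq T^{1/2}$, not from a $\kappa\sim K\cdot 2^{2J/3}$ constraint in Proposition~\ref{prop_on_particle_distances_stationary} as you suggest.
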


\subsection{Mid-time estimate}

The proof of Proposition~\ref{prop_right_fluctuations_backwards_path} requires a mid-time estimate for backwards paths in TASEP with step initial condition. We first state it in terms of backwards indices and adapt the argumentation from the proof of Proposition 4.8 of~\cite{BF22} to the context of particle positions.

\begin{thm} \label{thm_midtimeestimate}
	Let $N = \alpha T$ with $\alpha \in (0,1)$ and fix some $\eps\in (0, \tfrac{1}{3})$. Then, for all sufficiently large times $T$, it holds
	\begin{equation} \Pb( | N( T \downarrow \tfrac{T}{2}) - \tfrac{\alpha}{2} T | > 2 K T^{2/3}) \leq C e^{-c K},\end{equation} where $1\leq K = \Or(T^\eps)$.
	The constants $C, c > 0$ can be chosen uniformly for all $T$ large enough and for $\alpha$ in a closed subset of $(0,1)$.
\end{thm}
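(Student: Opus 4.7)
My plan is to analyse separately the upper-tail event $\{N(T\downarrow T/2)\geq M_+\}$ and lower-tail event $\{N(T\downarrow T/2)\leq M_-\}$, where $M_\pm = \lceil \alpha T/2 \pm 2KT^{2/3}\rceil$. Since $m\mapsto x_m(t)$ is strictly decreasing in $m$, the upper-tail event is equivalent to the backwards path at time $T/2$ sitting at a position $x_{N(T\downarrow T/2)}(T/2)\leq x_{M_+}(T/2)$, so the task is to control the position of the backwards path at the mid-time and then translate it back into an index via the local particle density.

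The main structural input is the clock-coupled concatenation of Corollary~\ref{Prop_3.4_[Fer18]_clock_coupling}: $x_N(T) = \min_{m\leq N} y_N^{{\rm step},m}(T/2;T)$, with the minimum attained at $m=N(T\downarrow T/2)$. A direct application of Lemma~\ref{Lemma_A.2} identifies the corresponding LLN curve $\bar y(m) = T - \sqrt{2T}(\sqrt{m} + \sqrt{N-m})$, which has a non-degenerate quadratic minimum at $m=N/2$, giving $\bar y(M_+)-\bar y(\lfloor N/2\rfloor)\geq c_\alpha K^2 T^{1/3}$ for some $c_\alpha>0$ depending only on $\alpha$. The upper-tail event thus forces the random fluctuations of $y_N^{{\rm step},M}(T/2;T)$ about its LLN, for some $M\geq M_+$, to beat a deterministic gap of order $K^2 T^{1/3}$ against the fluctuations at $m=\lfloor N/2\rfloor$.

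To bound this event uniformly in $M$ without paying an $\Or(T)$ union-bound factor, I invoke Proposition~\ref{prop_on_particle_distances_stationary} at time $t=T/2$, after the reparametrisation $T'=T/2$, $\gamma'=\alpha$, and $\kappa\sim K$, so that the labels $M, P$ appearing there become of order $\lceil \alpha T/2 \pm 2KT^{2/3}\rceil$. This yields a two-sided comparison of the step-IC interdistance $x_{\lfloor N/2\rfloor}(T/2) - x_{M_+}(T/2)$ to the corresponding interdistance in a stationary TASEP of density $\sqrt{\alpha}\pm\kappa(T/2)^{-1/3}$, modulo an event of probability $\leq Ce^{-cK}$. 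In the stationary process the interdistance is a sum of $\Or(KT^{2/3})$ i.i.d.\ geometric gaps, hence tightly concentrated around $2KT^{2/3}/\sqrt{\alpha}$ with sub-gaussian fluctuations $\Or(K^{1/2}T^{1/3})$. Combining this concentration with the moderate-deviation bounds on $x_N(T)$ from Lemma~\ref{Lemma 2.8}, Lemma~\ref{Lemma A.3} and Lemma~\ref{Lemma 2.9} excludes the possibility that $x_N(T)$ itself is atypical, closes the curvature gap, and gives $\Pb(N(T\downarrow T/2)\geq M_+)\leq Ce^{-cK}$. The lower tail $\{N(T\downarrow T/2)\leq M_-\}$ is handled symmetrically using the opposite direction of Proposition~\ref{prop_on_particle_distances_stationary}.

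The principal obstacle is the tension between the quadratic LLN curvature (a deterministic gap of size $K^2 T^{1/3}$, much bigger than the generic $T^{1/3}$ fluctuation scale) and the need for a uniform-in-$M$ bound: a naive Tracy--Widom-type estimate per label would give $Ce^{-cK^3}$, which blows up under the $\Or(T)$ union bound for constant $K$. Replacing this union bound by the stationary comparison of Proposition~\ref{prop_on_particle_distances_stationary} is what yields the linear-in-$K$ exponent advertised in the introductory discussion of Section~\ref{sectLocal} (as opposed to the exponents $2$ of~\cite{BF20} and $3$ of~\cite{BF22}). The technical point requiring the most care is that Proposition~\ref{prop_on_particle_distances_stationary} is stated for $t=T-\varkappa T^{2/3}$ with $\varkappa$ in a bounded subset of $\R$, which is compatible with $t=T/2$ only after the above reparametrisation; one must verify that its constants remain uniform in $\alpha$ on compact subsets of $(0,1)$ and in $K=\Or(T^\eps)$ for $\eps\in(0,1/3)$, which is exactly where the constraint on $\eps$ in the statement of the theorem enters.
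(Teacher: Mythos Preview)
Your overall plan---use the concatenation identity $x_N(T)=\min_{m\le N}y_N^{\mathrm{step},m}(T/2;T)$, the quadratic LLN curvature of $\bar y(m)$, and Proposition~\ref{prop_on_particle_distances_stationary} to avoid a naive union bound---is indeed the paper's strategy. But the execution as written has a genuine gap.

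The event $\{N(T\downarrow T/2)>\tilde N\}$ (your $M_+$) forces $x_N(T)=\min_{\tilde N<n\le N}\bigl\{x_n(T/2)+y^{x_n(T/2)}_{N-n+1}(T/2,T)\bigr\}$. After inserting a threshold $S=(1-2\sqrt\alpha)T+\phi K^2T^{1/3}$, the hard part is to bound $\Pb\bigl(\min_{\tilde N<n\le N}\{\cdots\}\le S\bigr)$, a minimum over \emph{all} labels $n>\tilde N$. Your invocation of Proposition~\ref{prop_on_particle_distances_stationary} compares only the single interdistance $x_{\lfloor N/2\rfloor}(T/2)-x_{M_+}(T/2)$ to a stationary one; this says nothing about whether \emph{some} $n$ between $M_+$ and $N$ still achieves the minimum, which is exactly the union-bound obstruction you flag but do not resolve. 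Knowing that $x_{M_+}(T/2)$ sits at its typical position does not by itself locate the minimizer of the full curve.

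The paper closes this with two ingredients your sketch is missing. First, a linear split via $h(n)=(n-\alpha T/2)/\sqrt\alpha$ separates the two contributions $x_n(T/2)$ and $y^{x_n(T/2)}_{N-n+1}(T/2,T)$, reducing to two symmetric problems each involving a single step-IC TASEP. Second, within each half the label range is decomposed: labels with $k=n-\alpha T/2>KT^{2/3+\delta}$ are handled by a direct union bound over one-point tails (the curvature gap $\sim k^2T^{-1}$ dominates); labels with $2KT^{2/3}<k\le KT^{2/3+\delta}$ are partitioned into windows $I_\ell=(\ell KT^{2/3},(\ell+1)KT^{2/3}]$, and Proposition~\ref{prop_on_particle_distances_stationary} is applied with $\kappa=\kappa(\ell)\sim\ell K$ to obtain the comparison \emph{uniformly for all $k\in I_\ell$} (via the crossing of backwards-index processes in its proof), after which Doob's inequality on the resulting geometric random walk gives one bound per window. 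Summing over $\ell$ yields the $Ce^{-cK}$. Your proposal applies the stationary comparison only once for a fixed pair, and omits both the $h(n)$ split and the window decomposition, so the argument does not go through as stated. (A minor point: the moderate-deviation input for $x_N(T)$ is Lemma~\ref{Lemma_A.2}, not Lemmas~\ref{Lemma 2.8}, \ref{Lemma A.3}, \ref{Lemma 2.9}.)
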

\begin{proof}
	Let $\tilde{N} := \tfrac{\alpha}{2} T + 2 K T^{2/3}$. In the following, we show \begin{equation} \Pb(N(T \downarrow \tfrac{T}{2}) \leq \tilde{N}) \geq 1- C e^{-c K} \label{eq.4.20} \end{equation} for all times $T$ large enough and for constants $C,c > 0$ which are uniform in the sense stated above.
	By the relations in Proposition~\ref{Prop_3.4_[Fer18]_basic_coupling} and (\ref{estimate_Sep98_backwards_paths}), we observe that if we have
	\begin{equation} x_N(T) < \min_{ \tilde{N} < n \leq N} \Bigl \{ x_n(\tfrac{T}{2}) + y^{x_n( {T \over 2})}_{N-n+1}(\tfrac{T}{2}, T) \Bigr \} ,\end{equation}
	then this implies $N(T \downarrow \tfrac{T}{2}) \leq \tilde{N}$. As we do not need the coupling of the processes from now on, we replace $y^{x_n( {T \over 2})}_{N-n+1}({T \over 2}, T)$ by $x^{\textrm{step}}_{N-n+1}(\tfrac{T}{2})$ in our notation. For any $S \in \R$, we deduce:
	\begin{equation}
\begin{aligned}
	\Pb( N(T \downarrow \tfrac{T}{2}) \leq \tilde{N})
	\geq & \Pb\Bigl( x_N(T) < S < \min_{\tilde{N} < n \leq N} \{x_n(\tfrac{T}{2}) + x^{\textrm{step}}_{N-n+1}(\tfrac{T}{2})\} \Bigr) \\
	\geq & 1 - \Pb( x_N(T) \geq S) - \Pb\Bigl( \min_{\tilde{N} < n \leq N} \{ x_n(\tfrac{T}{2}) + x^{\textrm{step}}_{N-n+1}(\tfrac{T}{2}) \} \leq S \Bigr).
\end{aligned}
	\end{equation}
	Choosing $S := (1-2 \sqrt{\alpha} )T + \phi K^2 T^{1/3}$ with $\phi := \tfrac{1}{4 \alpha^{3/2}}$, Lemma~\ref{Lemma_A.2} yields
	\begin{equation}\label{eq3.23}
\Pb( x_N(T) \geq S ) \leq C e^{-c K^3}.
\end{equation}
	Further, we define $h(n) := \tfrac{1}{\sqrt{\alpha}} ( n - \tfrac{\alpha}{2} T)$ and observe
	\begin{equation}
\begin{aligned}
	&\Pb\Bigl( \min_{\tilde{N} < n \leq N} \{ x_n(\tfrac{T}{2}) + x^{\textrm{step}}_{N-n+1}(\tfrac{T}{2}) \} \leq S \Bigr )  \\
	& \leq  \Pb\Bigl( \min_{\tilde{N} < n \leq N}\{ x_n( \tfrac{T}{2}) + h(n)\} \leq \tfrac{S}{2} \Bigr) + \Pb\Bigl(\min_{\tilde{N} < n \leq N} \{x^{\textrm{step}}_{N-n+1}(\tfrac{T}{2}) - h(n)\} \leq \tfrac{S}{2}\Bigr) . \label{pf_mid_time_estimate_two_summands_to_be_bounded}
\end{aligned}
	\end{equation}
	
	We only provide the details of the bound on the first summand in (\ref{pf_mid_time_estimate_two_summands_to_be_bounded}) as the bound on the second one can be approached in a similar manner.
	Henceforth, we use the notation
	\begin{equation}
 \Pb \Bigl( \min_{{\tilde{N} < n \leq N}} \{ x_n( \tfrac{T}{2}) + h(n)\} \leq \tfrac{S}{2}\Bigr)  =  \Pb \Bigl( \min_{ 2K T^{2/3} < k \leq \tfrac{\alpha}{2}T}\{ x_{\tfrac{\alpha}{2} T + k}( \tfrac{T}{2}) + h(\tfrac{\alpha}{2} T +k)\}\leq \tfrac{S}{2} \Bigr). \label{first_summand_pf_mid_time_estimate_in_pf}
	\end{equation}
	In order to bound \eqref{first_summand_pf_mid_time_estimate_in_pf}, we partition the domain of $k$, $\{ 2 K T^{2/3} +1, \dots, \tfrac{\alpha}{2}T \}$, into several subregions.
	Having small $k$, that is $k = \Or(K T^{2/3})$, in the domain, it is not possible to derive a bound on \eqref{first_summand_pf_mid_time_estimate_in_pf} solely with help of Lemma~\ref{Lemma_A.2}. Instead, we utilize a comparison to stationary TASEP. For large $k$, the one-point probabilities become very small or even equal zero. Lastly, for $k$ neither too large nor too small, we obtain uniform bounds from Lemma~\ref{Lemma_A.2}.
	Thus, for some $\delta \in (0,\tfrac{1}{3} - \eps)$, we use
\begin{align}
	\eqref{first_summand_pf_mid_time_estimate_in_pf}  & \leq \Pb \Bigl( \min_{ 2 K T^{2/3} < k \leq K T^{2/3 + \delta}}\{ x_{\tfrac{\alpha}{2} T + k}( \tfrac{T}{2}) + h(\tfrac{\alpha}{2} T +k)\} \leq \tfrac{S}{2}\Bigr) \label{smallandlargek_1} \\
	& \quad + \Pb\Bigl( \min_{K T^{2/3 + \delta} < k \leq \tfrac{\alpha}{2}T} \{ x_{\tfrac{\alpha}{2} T + k}( \tfrac{T}{2}) + h(\tfrac{\alpha}{2} T +k)\} \leq \tfrac{S}{2}\Bigr). \label{smallandlargek}
	\end{align}
\textbf{Claim A:} For all $T$ large enough, it holds
$ \eqref{smallandlargek} \leq C e^{-c K^2 T^{2\delta}}$. The constants $C,c > 0$ can be taken uniformly for $\alpha$ in a closed subset of $(0,1)$.

 We suppose $\alpha \in [a,b] \subseteq (0,1)$ and denote $k = \tfrac{\alpha}{2} T \xi$ with $\tfrac{2}{\alpha} K T^{-1/3+\delta} < \xi \leq 1$ as well as \begin{equation} \xi_0 = \min\Bigl(1, 2 \tfrac{1-\sqrt{\alpha}}{\sqrt{\alpha}}\Bigr) = \begin{cases}
1, & \alpha < \tfrac{4}{9}, \\ 2 \tfrac{1-\sqrt{\alpha}}{\sqrt{\alpha}}, & \alpha \geq \tfrac{4}{9}.
\end{cases} \end{equation}
First consider the case $\alpha \geq \tfrac{4}{9}$. Since $\sqrt{\alpha} > \alpha$ and $x_n(t) \geq -n$ for all $t \geq 0, n \in \N$, we obtain for $\xi \geq \xi_0$:
\begin{equation}
\begin{aligned}
x_{ \tfrac{\alpha T}{2} (1+\xi) }(\tfrac{T}{2}) + \tfrac{\sqrt{\alpha}}{ 2} \xi T \geq & - \tfrac{\alpha}{2} T(1+\xi) + \tfrac{\sqrt{\alpha}}{ 2} \xi T
\\ \geq & - \tfrac{\alpha}{2} T(1+\xi_0) + \tfrac{\sqrt{\alpha}}{ 2} \xi_0 T
= (1-2\sqrt{\alpha}) \tfrac{T}{2} + (1-\sqrt{\alpha})^2 \tfrac{T}{2}.
\end{aligned}
\end{equation}
As $K= o(T^{1/3})$, we deduce
\begin{equation}
\Pb(x_{ {{\alpha T} \over 2} (1+\xi) }(\tfrac{T}{2}) + \tfrac{\sqrt{\alpha}}{ 2} \xi T \leq \tfrac{S}{2}) =0
\end{equation} for $T$ large enough.

Thus for all $T$ large enough, $\eqref{smallandlargek}$ is the same as taking the minimum over $K T^{2/3 + \delta} < k \leq \tfrac{\alpha}{2}\xi_0 T$. For such values of $k$ we have\footnote{We introduced $\xi_0$ such that \eqref{eq4.33} holds true for all $\alpha \in [a,b] \subseteq (0,1)$.}
\begin{equation} \label{eq4.33}
( 1 - 2 \sqrt{ \alpha + 2 k T^{-1}} ) \tfrac{T}{2} \geq ( 1 - 2 \sqrt{\alpha}) \tfrac{T}{2} - \tfrac{k}{\sqrt{\alpha}} + \tfrac{k^2}{4 \alpha^{3/2}}T^{-1}
\end{equation} and $\alpha + 2 k T^{-1}$ is contained in a closed subset of $(0,1)$. Thus, by Lemma~\ref{Lemma_A.2}, there exist uniform constants $C,c > 0$ such that for all $T$ large enough,
\begin{equation}
\begin{aligned}
\eqref{smallandlargek} \leq & \sum_{K T^{2/3 + \delta} < k \leq \tfrac{\alpha}{2}\xi_0T} \Pb(x_{{\alpha \over 2} T + k}( \tfrac{T}{2}) \leq ( 1 - 2 \sqrt{ \alpha + 2 k T^{-1}} ) \tfrac{T}{2} - \tfrac{k^2}{4 \alpha^{3/2}}T^{-1} + \tfrac{\phi}{2} K^2 T^{1/3} ) \\ \leq & \sum_{K T^{2/3 + \delta} < k \leq \tfrac{\alpha}{2}\xi_0T} C e^{-c (2k^2 T^{-4/3} - K^2)} \leq  C e^{- c K^2 T^{2\delta}}.
\end{aligned}
\end{equation}

\textbf{Claim B:} For all $T$ large enough, it holds $\eqref{smallandlargek_1} \leq C e^{-cK}$.\\
	Now, we deal with the region where the integer $k$ takes small values. We have
	\begin{equation}
	\eqref{smallandlargek_1}
	\leq  \sum_{\ell=2}^{T^{\delta} -1} \Pb\Bigl( \min_{ k \in I_\ell} \{ x_{\tfrac{\alpha}{2} T + k}( \tfrac{T}{2}) + h(\tfrac{\alpha}{2} T +k)\} \leq \tfrac{S}{2}\Bigr) \label{sum_smallk}
	\end{equation}
	for $I_\ell := (\ell K T^{2/3}, (\ell+1) K T^{2/3}] \cap \Z$. The $\ell$-th term in \eqref{sum_smallk} can be bounded by the sum of
	 \begin{align}
     &\Pb\Bigl( \min_{ k \in I_\ell} \{ x_{\tfrac{\alpha}{2} T + k}( \tfrac{T}{2}) - x_{ \tfrac{\alpha}{2} T + \ell K T^{2/3}} ( \tfrac{T}{2}) + \tfrac{1}{\sqrt{\alpha}} (k - \ell K T^ {2/3})\}
	\leq -\tilde\phi \ell^2 K^2 T^{1/3}\Bigr) \label{sum_smallk_1}
	\end{align}
	and
	\begin{align}
   \Pb\Bigl( x_{\tfrac{\alpha}{2} T + \ell K T^{2/3}}( \tfrac{T}{2}) + h(\tfrac{\alpha}{2} T + \ell K T^{2/3}) \leq \tfrac{S}{2} + \tilde\phi \ell^2 K^2 T^{1/3}\Bigr), \label{sum_smallk_2}
	\end{align}
where we choose $\tilde\phi := \tfrac{1}{3 \alpha^{3/2}}$.

	\textbf{Claim B.1:} For all $T$ large enough, it holds $\sum_{\ell=2}^{T^{\delta} -1} \eqref{sum_smallk_2} \leq C e^{-c K^2}$. \\
	By Lemma~\ref{Lemma_A.2}, series expansion, and the choice of $\tilde\phi$ and $\phi$, we obtain
	\begin{equation}
\begin{aligned}
	& \Pb( x_{\tfrac{\alpha}{2} T + \ell K T^{2/3}}( \tfrac{T}{2}) + h(\tfrac{\alpha}{2} T + \ell K T^{2/3}) \leq \tfrac{S}{2} + \tilde\phi \ell^2 K^2 T^{1/3})\\
	& \leq  \Pb \Bigl( x_{\tfrac{\alpha}{2} T + \ell K T^{2/3}}( \tfrac{T}{2}) \leq ( 1 - 2 \sqrt{ \alpha + 2 \ell K T^{-1/3} } ) \tfrac{T}{2} - \tfrac{5}{96 \alpha^{3/2}} \ell^2 K^2 T^{1/3}\Bigr ) \\
	& \leq  C e^{- c \ell^2 K^2 }
\end{aligned}
	\end{equation}
with uniform constants for all $T$ large enough, $\alpha$ in a closed subset of $(0,1)$ and $\ell \in \{2, \dots, T^{\delta}-1\}$. Summing over $\ell$, Claim B.1 follows.
	
	\textbf{Claim B.2:} For all $T$ large enough, it holds $\sum_{\ell=2}^{T^{\delta} -1}\eqref{sum_smallk_1} \leq C e^{-c K}$. \\
	With help of Lemma~\ref{lemma_order_at_a_time_intersection_backwards_indices} and Proposition~\ref{prop_on_particle_distances_stationary}, we can bound the distances of particles in TASEP with step initial condition by the distances of particles in a stationary TASEP with suitable density.

For a fixed $\ell \in \{2, \dots, T^\delta -1\}$ and $k \in I_\ell$, let
\begin{equation}
t = \tfrac{T}{2}, \ P = \tfrac{\alpha}{2} T + \ell K T^{2/3}, \ N_+ = \tfrac{\alpha}{2} T + (\ell +1 ) K T^{2/3}, \ N_k = \tfrac{\alpha}{2} T + k.
\end{equation}
Also set
\begin{equation}
\kappa = \kappa(\ell)=\tfrac{(\ell+2)K}{2^{1/3} \sqrt{\alpha}},\,
 \rho_0 = \sqrt{ \tfrac{N_+}{ t}} = \sqrt{\alpha} + \tfrac{(\ell +1) K T^{-1/3}}{\sqrt{\alpha}} - \tfrac{(\ell +1)^2 K^2 T^{-2/3}}{2 \alpha^{3/2}} + \Or(\ell^3 K^3 T^{-1}),
\end{equation}
as well as
\begin{equation}
\rho_- = \rho_0 - \kappa ( \tfrac{T}{2})^{-1/3} \textrm{ and } P_- = \rho_-^2 \tfrac{T}{2} + \tfrac{3}{2} \kappa \rho_- ( \tfrac{T}{2})^{2/3}.
\end{equation}
	It holds $N_k \leq N_+$. Constructing both backwards index processes via the evolution of $x(t)$, we hence have $N_k(t \downarrow \tau) \leq N_+(t \downarrow \tau)$ for all $\tau \in [0,t]$.
	Further, the choice of $\kappa$ gives $P_- < P$ for $T$ large enough, uniformly in $\ell$. Constructing both backwards index processes starting at $P_-$ and $P$ with respect to the stationary TASEP $x^{\rho_-}(t)$ with density $\rho_-$, it holds $P_-(t \downarrow \tau) \leq P(t \downarrow \tau)$ for all $\tau \in [0,t]$.

	We couple $x(t)$ and $x^{\rho_-}(t)$ by clock coupling and apply Proposition~\ref{prop_on_particle_distances_stationary} for $t, N_+$ and $P_-$. Instead of $T$, we consider the time $ \tfrac{T}{2}$. With this, $t = \tfrac{T}{2}$ leads to $\varkappa = 0$, and we get $N_+ = \gamma \tfrac{T}{2}$ for $\gamma = \alpha + 2 (\ell+1) K T^{-1/3}$. Since $(\ell+1)K \leq T^{\delta}K = o(T^{1/3})$ (as $\eps+\delta<\tfrac{1}{3}$), $\gamma$ is contained in a closed subset of $(0,1)$ for $T$ large enough and we have $\kappa= o(T^{1/3})$. As seen in the proof of Proposition~\ref{prop_on_particle_distances_stationary} (see \eqref{eq3.39} and the paragraph below), it holds
\begin{equation}
\Pb( \exists \tau \in [0,t]: P_-(t \downarrow \tau) = N_+(t \downarrow \tau)) \geq 1 - Ce^{-c\kappa}.
\end{equation}
But then, since $P_- < P < N_k \leq N_+$ and the probability of several jump attempts at the same time equals zero, we also get
\begin{equation}
\Pb( \forall k \in I_\ell, \exists \tau \in [0,t]: P(t \downarrow \tau) = N_k(t \downarrow \tau)) \geq 1 - C e^{-c\kappa}.
\end{equation}
The constants $C, c > 0$ can be chosen uniformly for $\alpha$ in a closed subset of $(0,1)$, for times $T$ large enough and for $\ell \in \{2, \dots, T^\delta-1\}$.

	Lemma~\ref{lemma_order_at_a_time_intersection_backwards_indices} implies that with a probability of at least $1 - C e^{-c \kappa}$, it holds
	\begin{equation}
  x_{ \tfrac{\alpha}{2} T + k }(t) -x_{ \tfrac{\alpha}{2} T + \ell K T^{2/3}}(t) \geq x^{\rho_-}_{ \tfrac{\alpha}{2} T + k }(t)- x^{\rho_-}_{ \tfrac{\alpha}{2} T + \ell K T^{2/3}}(t),
  \end{equation}
  uniformly for all $k \in I_\ell$. Since $\sum_{\ell =2}^{T^\delta -1} C e^{-c \kappa(\ell)} \leq C e^{-c K}$,
this means that replacing the process $x$ with $x^{\rho_-}$ in $\sum_{\ell=2}^{T^{\delta} -1}\eqref{sum_smallk_1}$, the error term is at most $C e^{-c K}$.

It thus remains to bound \eqref{sum_smallk_1} for the stationary process $x^{\rho_-}$. For notational simplicity we write $\rho=\rho_-$ in the rest of the proof. Let $Z_j$ be independent geometrically distributed random variables with $\Pb(Z_j = i ) = \rho (1-\rho)^{i}, i \geq 0$. Then
	\begin{equation}
x^\rho_{ \tfrac{\alpha}{2} T + \ell K T^{2/3}} ( \tfrac{T}{2}) - x^\rho_{\tfrac{\alpha}{2} T + k}( \tfrac{T}{2}) - \tfrac{1}{\sqrt{\alpha}}(k - \ell K T^{2/3}) \overset{(d)}{=} \sum_{j=1}^{ k - \ell K T^{2/3}} (1 + Z_j - \tfrac{1}{\sqrt{\alpha}}). \label{eq_stationary_distance_as_sum_geom_rv}
\end{equation}
Setting
\begin{equation}
q := \rho-\sqrt{\alpha} =  - \tfrac{K T^{-1/3}}{\sqrt{\alpha}} - \tfrac{(\ell+1)^2 K^2 T^{-2/3}}{2 \alpha^{3/2}} + \Or(\ell^3 K^3 T^{-1})
\end{equation}
we get
\begin{equation}
	\E[Z_j] = \tfrac{1-\rho}{\rho}  = \tfrac{1-\sqrt{\alpha}}{\sqrt{\alpha}} - \tfrac{q}{\alpha} + \Or(q^2).
\end{equation}
Having $q^2 = \Or(\ell^2 K^2 T^{-2/3})$, this implies
\begin{equation}
	\eqref{eq_stationary_distance_as_sum_geom_rv}
	= \sum_{j=1}^{ k - \ell K T^{2/3}} (Z_j - \E[Z_j])  - (k-\ell K T^{2/3}) \tfrac{q}{\alpha} + \Or(\ell^2 K^3).
	\end{equation}
Since $q < 0$, we obtain
	\begin{equation}
\begin{aligned}
	& \Pb\Bigl( \min_{k \in I_\ell} \{ x^\rho_{\tfrac{\alpha}{2} T + k}( \tfrac{T}{2}) - x^\rho_{ \tfrac{\alpha}{2} T + \ell K T^{2/3}} ( \tfrac{T}{2}) + \tfrac{1}{\sqrt{\alpha}}(k - \ell K T^{2/3})\} \leq -\tilde{\phi} \ell^2 K^2 T^{1/3}\Bigr) \\
	& \leq \Pb\Biggl( \max_{k \in I_\ell} \sum_{j=1}^{ k - \ell K T^{2/3}} (Z_j - \E[Z_j]) \geq \tilde{\phi} \ell^2 K^2 T^{1/3} + K T^{2/3} \tfrac{q}{\alpha} - \Or(\ell^2 K^3)\Biggr).
\end{aligned}
	\end{equation}
	For $\ell \in \{2, \dots, T^{\delta}-1\}$ and $T$ large enough, we have
	\begin{equation} \tilde{\phi} \ell^2 K^2 T^{1/3} + K T^{2/3} \tfrac{q}{\alpha} - \Or(\ell^2 K^3) \geq \tfrac{1}{8} \tilde{\phi} \ell^2 K^2 T^{1/3}.\end{equation}
	Thus, to derive Claim B.2 it remains to bound
	\begin{equation}
	 \sum_{\ell=2}^{T^\delta-1} \Pb\Biggl( \max_{ k \in I_\ell} \sum_{j=1}^{ k - \ell K T^{2/3}} (Z_j - \E[Z_j]) \geq \tfrac{1}{8} \tilde{\phi} \ell^2 K^2 T^{1/3}\Biggr). \label{eq_sum_geom_rv_bound}
	\end{equation}
	 As $ \sum\nolimits_{j=1}^{ k - \ell K T^{2/3}} (Z_j - \E[Z_j]) $ is a martingale with respect to the index $k$, we can apply $x \mapsto e^{\lambda x}, \lambda > 0$, on both sides and employ Doob's submartingale inequality. Choosing $\lambda = T^{-1/3}$, a standard computation leads to
	\begin{equation}
	\eqref{eq_sum_geom_rv_bound} \leq \sum_{\ell=2}^{T^\delta-1} C e^{-c \ell^2 K^2} \leq C e^{-c K^2}
	\end{equation}
	with uniform constants for all $\alpha$ in a closed subset of $(0,1)$, all $\ell \in \{2, \dots, T^\delta-1\}$ and for all $T$ sufficiently large.
	
	With the bounds in Claim B.1 and Claim B.2, the proof of Claim B is completed.
	
\paragraph*{Conclusion} Claim A and Claim B together imply
	\begin{equation}
\Pb\Bigl( \min_{\tilde{N} < n \leq N} \{x_n(\tfrac{T}{2}) + h(n)\}\leq \tfrac{S}{2}\Bigr)\leq C e^{-c K}
\end{equation}
with uniform constants $C,c > 0$ for all times $T$ large enough and for $\alpha $ in a closed subset of $(0,1)$. In essentially the same way one shows
	\begin{equation}
\Pb\Bigl( \min_{\tilde{N} < n \leq N} \{ x^{\textrm{step}}_{N-n+1}(\tfrac{T}{2}) - h(n)\} \leq \tfrac{S}{2}\Bigr) \leq C e^{-c K}
\end{equation}
as well. Combining these two bounds and \eqref{eq3.23}, the proof of \eqref{eq.4.20} is completed. The converse estimate on $N(T \downarrow \tfrac{T}{2})$ is directly obtained by switching the roles of $x(t)$ and $x^{\text{step}}(t)$.
\end{proof}

In terms of backwards paths, the mid-time estimate formulates as follows:
\begin{thm} \label{thm-mid-time-estimate-paths}
	Let $N = \alpha T $ with $\alpha \in (0,1)$ and fix some $\eps\in (0, \tfrac{1}{3})$. Then, it holds for all $T$ large enough:
	\begin{equation}
	\Pb( | x_{N( T \downarrow {T \over 2})} ( \tfrac{T}{2}) - (1- 2 \sqrt{\alpha}) \tfrac{T}{2} | > \tfrac{3}{\sqrt{\alpha}} K T^{2/3}) \leq C e^{-c K},
	\end{equation}
	where $ 0 < K = \Or(T^\eps)$.
	The constants $C,c > 0$ exist uniformly for all large times $T$ and for $\alpha$ in a closed subset of $(0,1)$.
\end{thm}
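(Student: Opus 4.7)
The plan is to lift the index-level estimate of Theorem~\ref{thm_midtimeestimate} to a bound on the particle position via the law of large numbers for TASEP with step initial condition (Lemma~\ref{Lemma_A.2}). First, by Theorem~\ref{thm_midtimeestimate} there is an event $E_1$ with $\Pb(E_1^c) \leq C e^{-cK}$ on which the backwards index $N(T\downarrow \tfrac{T}{2})$ lies in $I \coloneqq [\tfrac{\alpha}{2}T - 2KT^{2/3},\, \tfrac{\alpha}{2}T + 2KT^{2/3}]$. Since the map $n \mapsto x_n(\tfrac{T}{2})$ is strictly decreasing, on $E_1$ the position $x_{N(T\downarrow T/2)}(\tfrac{T}{2})$ is sandwiched between $x_{n_+}(\tfrac{T}{2})$ and $x_{n_-}(\tfrac{T}{2})$, where $n_\pm \coloneqq \lfloor \tfrac{\alpha}{2}T \pm 2KT^{2/3}\rfloor$. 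Thus it suffices to control the upper tail of $x_{n_-}(\tfrac{T}{2})$ and the lower tail of $x_{n_+}(\tfrac{T}{2})$.

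Next, a Taylor expansion of the law-of-large-numbers position $(1 - 2\sqrt{n/(T/2)})\tfrac{T}{2}$ at $n = n_\pm$ in the small parameter $KT^{-1/3}/\alpha$ yields
\begin{equation}
\bigl(1 - 2\sqrt{2 n_\pm/T}\bigr)\tfrac{T}{2} = (1 - 2\sqrt{\alpha})\tfrac{T}{2} \mp \tfrac{2}{\sqrt{\alpha}}\,KT^{2/3} + \Or\bigl(K^2 \alpha^{-3/2} T^{1/3}\bigr).
\end{equation}
Since $\alpha$ stays in a compact subset of $(0,1)$ and $K = \Or(T^\eps)$ with $\eps < \tfrac{1}{3}$, the Taylor remainder is $o(KT^{2/3})$ and, for $T$ large, strictly smaller than $\tfrac{K}{2\sqrt{\alpha}}T^{2/3}$. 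I then apply Lemma~\ref{Lemma_A.2} to $x_{n_\pm}(\tfrac{T}{2})$ at the moderate-deviation scale $KT^{1/3}$: this produces an event $E_2$ with $\Pb(E_2^c) \leq C e^{-cK}$ on which both $x_{n_\pm}(\tfrac{T}{2})$ lie within $\tfrac{K}{2\sqrt{\alpha}}T^{2/3}$ of their respective laws of large numbers. Combining the leading shift $\tfrac{2}{\sqrt{\alpha}}KT^{2/3}$ with the two slacks of size $\tfrac{K}{2\sqrt{\alpha}}T^{2/3}$ then yields the desired $\tfrac{3}{\sqrt{\alpha}}KT^{2/3}$ bound on $E_1 \cap E_2$, and a union bound finishes the proof.

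The only delicate point, and the main potential obstacle, is uniformity of the constants coming from Lemma~\ref{Lemma_A.2}. The lemma is applied at densities $\gamma_\pm = \alpha \pm 4KT^{-1/3}$ that drift with $T$, but since $\eps < \tfrac{1}{3}$ forces $\gamma_\pm \to \alpha$, these densities remain in a closed subset of $(0,1)$ once $T$ is large. Hence the constants $C,c$ can be chosen uniform in $T$ and in $\alpha$ over any compact subinterval of $(0,1)$, matching the uniformity claim in the theorem statement.
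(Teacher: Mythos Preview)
Your proof is correct and follows exactly the route the paper takes: the paper's own proof consists of the single sentence ``This result is a direct consequence of Theorem~\ref{thm_midtimeestimate} and Lemma~\ref{Lemma_A.2},'' and you have spelled out precisely those details, including the sandwiching via monotonicity, the Taylor expansion producing the $\tfrac{2}{\sqrt{\alpha}}KT^{2/3}$ shift, and the uniformity check for the drifting densities $\gamma_\pm$.
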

\begin{proof}
This result is a direct consequence of Theorem~\ref{thm_midtimeestimate} and Lemma~\ref{Lemma_A.2}.
\end{proof}

With the mid-time estimate at hand, we now turn to the proof of Proposition~\ref{prop_right_fluctuations_backwards_path}.

\subsection{Proof of Proposition~\ref{prop_right_fluctuations_backwards_path}}

	We define \begin{equation} l(t) := (1- 2 \sqrt{\gamma})t, \ m := \min\{ n \in \N | 2^{-n} T \leq T^{1/2} \}\end{equation} and choose $u_1 < u_2 < \dots$ by
	$ u_1 := \frac{K}{10} \textrm{ and } u_n - u_{n-1} = u_1 2^{-(n-1)/2}$. We divide the interval $[0,T]$ up into $2^n$ subintervals for each $n \in \{1, \dots , m \}$ and denote
	\begin{equation}
	\begin{aligned}
	A_n & := \{ x_{N(T \downarrow k 2^{-n} T)}(k 2^{-n} T) \leq l(k 2^{-n} T) + u_n T^{2/3}, 0 \leq k \leq 2^n \}, \\
	B_{n,k} & := \{ x_{N(T \downarrow k 2^{-n} T)}(k 2^{-n} T) > l(k 2^{-n} T) + u_n T^{2/3}\} , \ 0 \leq k \leq 2^n,
	\end{aligned}
	\end{equation}
	as well as
	\begin{equation}
	\begin{aligned}
	G & := \Big\{ \sup_{t \in [0,T]} \{ x_{N(T \downarrow t)}(t) - l(t) \} \leq K T^{2/3} \Big\}.
	\end{aligned}
	\end{equation}
	As $k$, we only consider integers. Our goal is to show $ \Pb(G^c) \leq C e^{-c K} $ for suitable constants $C,c > 0$. We define
	\begin{equation}
	F := \{ x_{N(T \downarrow k 2^{-m} T)} ( k 2^{-m}T) > l(k 2^{-m}T) + \tfrac{K}{2} T^{2/3} \textrm{ for some } k \in \{ 0, \dots, 2^m -1 \} \}.
	\end{equation}
	The first feature we need is the inclusion
	\begin{equation}
	F \subseteq \bigcup_{n=2}^m \bigcup_{k=1}^{2^n-1} (B_{n,k} \cap A_{n-1}) \cup \bigcup_{k=0}^2 B_{1,k}. \label{inclusionforE}
	\end{equation}
	Heuristically, \eqref{inclusionforE} implies that if the backwards path fluctuates to the right at one edge point of the subintervals of length $2^{-m}T$, then this either happens at time $0$ or at time $T$, or we find a decomposition of $[0,T]$ into larger dyadic subintervals such that at one of their inner edge points, the backwards path fluctuates to the right, while at the two adjacent edge points, it stays to the left of a suitable line.
	
	To obtain \eqref{inclusionforE}, it is essential that $u_n < \tfrac{K}{2}$ for all $n \in \N$. We make the following case distinction:
	If we have $ x_{N(T \downarrow 0)}(0) > u_1 T^{2/3}$, then $B_{1,0}$ occurs.
	If it holds $ x_{N}(T) > (1-2 \sqrt{\gamma}) T + u_1 T^{2/3}$, then $B_{1,2}$ takes place.
	
	Assuming $ x_{N(T \downarrow 0)}(0) \leq u_1 T^{2/3}$ and $x_{N}(T) \leq (1 - 2 \sqrt{\gamma}) T + u_1 T^{2/3}$, we see that there exists a maximal $n_0 \in \{0, \dots, m-1\}$ such that \begin{equation} x_{N(T \downarrow k 2^{-m} T)} ( k 2^{-m}T) > l(k 2^{-m}T) + u_{m-n_0} T^{2/3}\end{equation} is fulfilled by some $k = 2^{n_0} z$ with $z \in \{1, \dots, 2^{m-n_0}-1 \}$ odd. Then, $B_{m-n_0,z}$ occurs. If $n_0 = m-1$, then we have $B_{m-n_0,z} = B_{1,1}$. Else, we show that $A_{m-n_0-1}^c$ taking place would contradict the maximality of $n_0$ and conclude that $B_{m-n_0,z} \cap A_{m-n_0-1}$ occurs. This shows \eqref{inclusionforE}.
	
	Denoting
	\begin{equation}
	\begin{aligned}
	L & := \Big\{ \sup_{s \in [0,1]} | x_{N(T \downarrow (k+s) 2^{-m} T )}((k+s) 2^{-m} T ) - x_{N(T \downarrow k 2^{-m} T)}(k 2^{-m}T) - l(s 2^{-m} T) | \\ & \qquad \leq \tfrac{K}{2} T^{2/3}, 0 \leq k \leq 2^m-1 \Big\},
	\end{aligned}
	\end{equation}
	we have $F^c \cap L \subseteq G$ and the inclusion \eqref{inclusionforE} implies
	\begin{equation}
	\Pb(G^c) \leq \Pb(L^c) + \sum_{n=2}^m \sum_{k=1}^{2^n-1} \Pb(B_{n,k}\cap A_{n-1}) + \sum_{k=0}^2 \Pb( B_{1,k}).
	\end{equation}
	
	\paragraph*{Bound on $\Pb(L^c)$}	Both the number of jumps to the left and the number of jumps to the right of the backwards path $x_{N(t \downarrow \tau)}(\tau)$ are stochastically dominated by a Poisson process $\mathcal{P}$ with intensity $1$. Consequently, the same applies to the change of position of the path: stochastically,
	\begin{equation}
	\sup_{s \in [0,1]} | x_{N(T \downarrow (k+s) 2^{-m} T )}((k+s) 2^{-m} T ) - x_{N(T \downarrow k 2^{-m} T)}(k 2^{-m}T)|
	\end{equation}
	is bounded by
	\begin{equation} \mathcal{P}((k+1) 2^{-m} T) - \mathcal{P}(k 2^{-m} T).\end{equation}
	Exploiting stationarity of the increments of $\mathcal{P}$ and applying the exponential Chebyshev inequality, we derive \begin{equation} \Pb(L^c) \leq C e^{-c K T^{2/3}}.\end{equation}
	Here, it is essential that we consider increments over intervals of length $2^{-m} T \leq T^{1/2} \ll K T^{2/3}$.
	
	\begin{figure}[t!] \centering
		\includegraphics[scale=1]{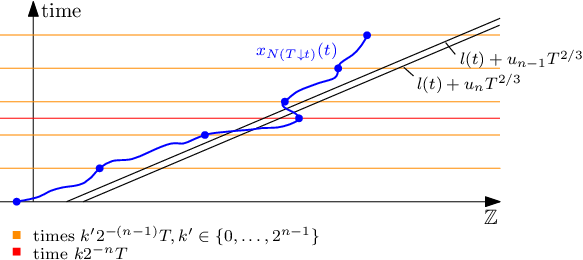}
		\caption{The event $B_{n,k} \cap A_{n-1}$ for $k \in \{1, \dots, 2^n-1\}$ odd.}
		\label{figure_intersection_A_B}
	\end{figure}
	
	\paragraph*{Bound on $\sum_{n=2}^m \sum_{k=1}^{2^n-1} \Pb(B_{n,k}\cap A_{n-1}) + \sum_{k=0}^2 \Pb( B_{1,k})$}
	For $ n \in \{2, \dots , m \} $ and $k \in \{1, \dots, 2^n-1\}$ even, we have $B_{n,k} \cap A_{n-1} = \varnothing$ and therefore $ \Pb( B_{n,k} \cap A_{n-1}) = 0$. Thus, assume that $k \in \{1 , \dots, 2^n-1\}$ is odd and suppose $B_{n,k} \cap A_{n-1}$ occurs.
	We denote \begin{equation} t_1 := (k-1)2^{-n}T, t_2 := (k+1) 2^{-n} T, t := k 2^{-n} T \label{eq_def_t1_t2_t} \end{equation} and obtain by $A_{n-1}$:
	\begin{align}
	x_{N(T \downarrow t_1)} (t_1) & \leq l(t_1) + u_{n-1} T^{2/3} =: x_1, \label{eq_def_x1} \\ x_{N(T \downarrow t_2)}(t_2) &\leq l(t_2) + u_{n-1} T^{2/3} =: x_2. \label{eq_def_x2}
	\end{align}
	The event $B_{n,k}$ gives $x_{N(T \downarrow t)}(t) > l(t) + u_n T^{2/3}$. Thus, we have
	\begin{equation}
	\Pb( B_{n,k} \cap A_{n-1}) \leq \Pb(x_{N(T \downarrow t)}(t) > l(t) + u_n T^{2/3}, x_{N(T \downarrow t_1)}(t_1) \leq x_1, x_{N(T \downarrow t_2)}(t_2) \leq x_2),
	\end{equation}
	see also Figure~\ref{figure_intersection_A_B}. Making use of this inequality, we derive a suitable bound on $\Pb( B_{n,k} \cap A_{n-1})$ in Proposition~\ref{prop_singleboundB_n,kA_n-1} and conclude in Corollary~\ref{cor_boundsumBnkAn1} that it holds
	\begin{equation}
	\sum_{n=2}^m \sum_{k=1}^{2^n-1} \Pb(B_{n,k} \cap A_{n-1}) \leq C e^{-c K}.
	\end{equation}
	In particular, the proof of Proposition~\ref{prop_singleboundB_n,kA_n-1} displays why we need to choose \mbox{$K = \Or(T^\eps) \subseteq o(T^{1/12})$}.
	
	Suitable bounds on $\Pb(B_{1,0}), \Pb(B_{1,1})$ and $\Pb(B_{1,2})$ are obtained by Lemma~\ref{Lemma 2.8}, see also Remark~\ref{rem_possible_extensions_auxiliary_lemmata}, by Theorem~\ref{thm-mid-time-estimate-paths} and lastly by Lemma~\ref{Lemma_A.2}.
	
	For all bounds, the constants $C,c > 0$ can be chosen uniformly for large $T$ and for $\gamma$ in a closed subset of $(0,1)$. Combining all bounds, we conclude \mbox{$\Pb(G^c) \leq C e^{-c K}$}.

\subsection{Application of a comparison inequality and the mid-time estimate}
It remains to bound
\begin{equation} \begin{aligned}
& \Pb( B_{n,k} \cap A_{n-1}) \\ & \leq \Pb(x_{N(T \downarrow t)}(t) > l(t) + u_n T^{2/3}, x_{N(T \downarrow t_1)}(t_1) \leq x_1, x_{N(T \downarrow t_2)}(t_2) \leq x_2), \end{aligned}
\end{equation}
with $t,t_1,t_2$ and $x_1,x_2$ defined in \eqref{eq_def_t1_t2_t}, \eqref{eq_def_x1}, \eqref{eq_def_x2} and $N = \gamma T$ with $\gamma \in (0,1)$.

By Lemma~\ref{lemma_alwaysstepparticle}, $x_{N(T \downarrow t_1)}(t_1) \leq x_1$ implies that there exists some $m_{t_2} \in \N$ such that $x_{N(T \downarrow t_2)}(t_2) = x_{m_{t_2}}^{\textrm{step}, x_1} (t_1, t_2).$
Having $ x_{N(T \downarrow t_2)}(t_2) \leq x_2$, this in particular gives the existence of a label $M \in \N$ such that
\begin{equation} x_{N(T \downarrow t_2)}(t_2) \leq x_M^{\textrm{step}, x_1}(t_1,t_2) \leq x_2\end{equation} is valid. We choose $M$ to be the minimal label for which the inequalities above are fulfilled. The next lemma gives us control on the value of $M$.

\begin{figure}[t!]
	\centering
	\includegraphics[scale=0.8]{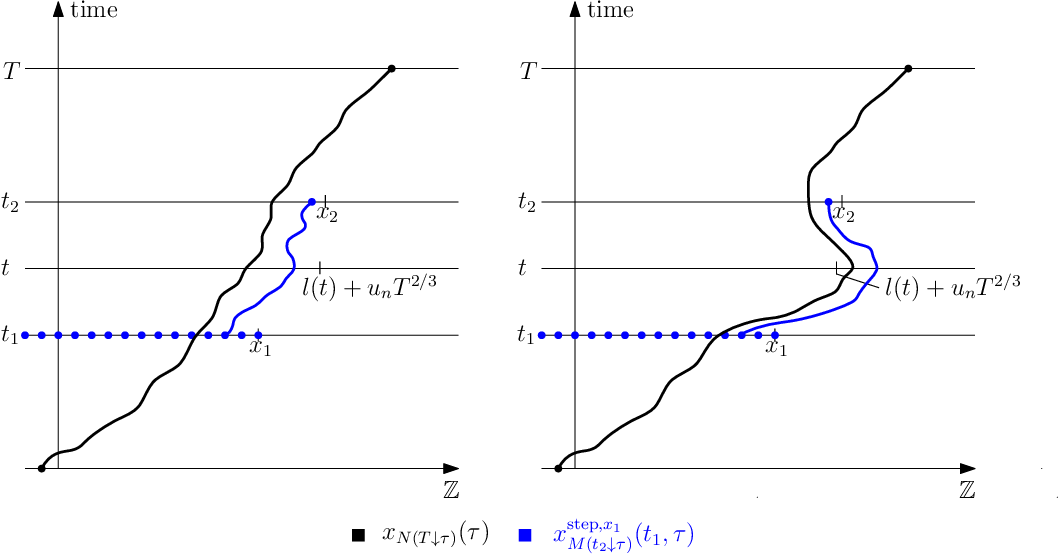}
	\caption{To bound $\Pb( x_{N(T \downarrow t)}(t) > l(t) + u_n T^{2/3}, x_{N(T \downarrow t_1)}(t_1) \leq x_1, x_{N(T \downarrow t_2)}(t_2) \leq x_2)$, we compare $x_{N(T \downarrow \tau)}(\tau)$ to $x^{\textrm{step},x_1}_{M(t_2 \downarrow \tau)}(t_1,\tau)$ and apply the mid-time estimate to the latter. Thereby, we can derive that the event in the figure to the right is very unlikely. } \label{figure_relation_TASEPs_step_IC_bound_unlikely_event}
\end{figure}

 \begin{lem} \label{lemma_minimal_M_region}
	Define $ \phi_n := \sqrt{\gamma} \tfrac{1}{20} (2^{n-1})^{1/6} \ \textrm{and} \ \tilde{M} := \gamma 2^{-n+1} T - \phi_n K (2^{-n+1}T)^{2/3}.$
	Then, it holds \begin{equation} \Pb( M > \tilde{M}) \geq 1 - C e^{-c (2^{n-1})^{1/2} K}\end{equation} for $T$ sufficiently large and $0 < K = \Or(T^\eps)$ with $\eps \in (0, \tfrac{1}{12})$ fixed. The constants $C, c > 0$ can be chosen uniformly for all times $T$ large enough, for all $n \in \{2, \dots, m \}$ and for $\gamma$ in a closed subset of $(0,1)$.
\end{lem}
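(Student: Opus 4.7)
The plan is to rewrite $\{M \leq \tilde{M}\}$ as a tail event for TASEP with step initial condition, and then apply Lemma~\ref{Lemma_A.2}. By the minimality of $M$ and the fact that $x^{\mathrm{step},x_1}_n(t_1,t_2)$ is non-increasing in $n$ (right-to-left labelling), the event $\{M \leq \tilde M\}$ coincides almost surely with $\{x^{\mathrm{step},x_1}_{\tilde M}(t_1,t_2) \leq x_2\}$. By translation invariance in space and time, with $\Delta t := t_2-t_1 = 2^{-n+1}T$, one has $x^{\mathrm{step},x_1}_{\tilde M}(t_1,t_2) - x_1 \stackrel{(d)}{=} x^{\mathrm{step}}_{\tilde M}(\Delta t)$, while $x_2-x_1 = l(t_2)-l(t_1) = (1-2\sqrt\gamma)\Delta t$ by definition of $x_1,x_2$ and $l$. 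Hence
\begin{equation}
 \Pb(M \leq \tilde M) = \Pb\bigl(x^{\mathrm{step}}_{\tilde M}(\Delta t) \leq (1-2\sqrt\gamma)\Delta t\bigr).
\end{equation}

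Next I would locate this event with respect to the law of large numbers of $x^{\mathrm{step}}_{\tilde M}(\Delta t)$. Writing $\tilde M/\Delta t = \gamma - \phi_n K (\Delta t)^{-1/3}$ and Taylor-expanding,
\begin{equation}
\bigl(1-2\sqrt{\tilde M/\Delta t}\bigr)\Delta t = (1-2\sqrt\gamma)\Delta t + \tfrac{\phi_n K}{\sqrt\gamma}(\Delta t)^{2/3}\bigl(1+o(1)\bigr).
\end{equation}
The $o(1)$ error is of order $\phi_n K(\Delta t)^{-1/3} = \tfrac{\sqrt\gamma}{20}(2^{n-1})^{1/2}KT^{-1/3}$, which by $2^{n-1} \leq 2^{m-1} < T^{1/2}$ and $K = \Or(T^\eps)$ with $\eps < \tfrac{1}{12}$ is $\Or(T^{\eps - 1/12}) = o(1)$ uniformly in $n$. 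In particular $\tilde M/\Delta t$ stays in a closed subset of $(0,1)$ uniformly, which is the regime in which Lemma~\ref{Lemma_A.2} applies with constants uniform in $T$ large, in $n$, and in $\gamma$ on a closed subset of $(0,1)$.

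The event we want to bound is therefore a lower-tail deviation of size $s(\Delta t)^{1/3}$ with
\begin{equation}
 s = \tfrac{1}{20\gamma}(2^{n-1})^{-1/6} K T^{1/3}\bigl(1+o(1)\bigr),
\end{equation}
using $(2^{n-1})^{1/6}(2^{-n+1})^{1/3} = (2^{n-1})^{-1/6}$. Lemma~\ref{Lemma_A.2} then yields $\Pb(M \leq \tilde M) \leq Ce^{-cs^{3/2}}$, and since
\begin{equation}
 s^{3/2} \gtrsim (2^{n-1})^{-1/4} K^{3/2} T^{1/2} \geq (2^{n-1})^{1/2} K
\end{equation}
(the last inequality rearranges to $K^{1/2}T^{1/2} \geq (2^{n-1})^{3/4}$, which holds because $(2^{n-1})^{3/4} \leq T^{3/8} \leq T^{1/2} \leq K^{1/2}T^{1/2}$ for $K \geq 1$), the claimed bound $Ce^{-c(2^{n-1})^{1/2}K}$ follows.

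The principal obstacle is uniformity of $C,c$ across the whole range $n \in \{2,\ldots,m\}$. The worst case is $n$ close to $m$, where $\Delta t$ is smallest (of order $T^{1/2}$) and the density correction $\phi_n K(\Delta t)^{-1/3}$ is largest; keeping this correction $o(1)$ uniformly so that $\tilde M/\Delta t$ remains bounded away from $0$ and $1$ is precisely what forces the restriction $\eps < \tfrac{1}{12}$ and is the source of the corresponding restriction in Proposition~\ref{prop_right_fluctuations_backwards_path}. Once uniformity of the density is secured, Lemma~\ref{Lemma_A.2} supplies the uniform constants in $\gamma$ and $n$ automatically.
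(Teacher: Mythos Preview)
Your argument is essentially the same as the paper's: both reduce $\{M\le\tilde M\}$ to the lower-tail event $\{x^{\mathrm{step}}_{\tilde M}(\Delta t)\le (1-2\sqrt\gamma)\Delta t\}$, Taylor-expand the law of large numbers $(1-2\sqrt{\tilde M/\Delta t})\Delta t$, and apply Lemma~\ref{Lemma_A.2}.

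There is one inaccuracy. The event you bound is a \emph{lower}-tail event for $x^{\mathrm{step}}_{\tilde M}(\Delta t)$ (since $\tilde M/\Delta t<\gamma$ makes its law of large numbers exceed $(1-2\sqrt\gamma)\Delta t$), so Lemma~\ref{Lemma_A.2} as stated in the paper gives only the bound \eqref{firsttail}, namely $Ce^{-cs}$, not the $Ce^{-cs^{3/2}}$ of \eqref{secondtail} which is the upper-tail estimate. This does not damage your conclusion: your computation shows $s\asymp (2^{n-1})^{-1/6}KT^{1/3}$, and since $2^{n-1}<T^{1/2}$ one has directly $s\gtrsim (2^{n-1})^{1/2}K$, so the linear bound already yields $Ce^{-c(2^{n-1})^{1/2}K}$. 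The detour through $s^{3/2}$ is unnecessary, and in fact the paper's proof uses exactly the linear bound. Your discussion of uniformity in $n$ (and the role of $\eps<\tfrac{1}{12}$ in keeping $\tilde M/\Delta t$ bounded away from $0,1$) is correct and matches the paper.
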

\begin{proof}
	We observe $ \{ M > \tilde{M} \} = \{ x^{\textrm{step}, x_1}_{\tilde{M}} ( t_1, t_2) > x_2 \} $ and
	\begin{equation} \tfrac{\tilde{M}}{2^{-n+1}T} = \gamma - \Or(T^{\eps-1/12}) \end{equation} by our assumptions on $n$ and $K$ and the choice of $\phi_n$. By series expansion and since $x_{\tilde{M}}^{{\rm step},x_1}(t_1,\tau)$ has the same distribution as $x_{\tilde{M}}^{\rm step}(\tau-t_1) + x_1$, we obtain
	\begin{equation}
\begin{aligned}
	\Pb( M > \tilde{M})
	& =  \Pb(x^{\textrm{step}, x_1}_{\tilde{M}} ( t_1, t_2) > x_2)  =  \Pb(x^{\textrm{step}}_{\tilde{M}}(t_2 - t_1) > x_2 - x_1 ) \\
	& =  \Pb(x^{\textrm{step}}_{\tilde{M}}( 2^{-n+1}T) > (1- 2 \sqrt{\gamma}) 2^{-n+1}T ) \\
	& \geq \Pb\Bigl(x^{\textrm{step}}_{\tilde{M}}( 2^{-n+1}T) > \Bigl (1- 2 \sqrt{\tfrac{\tilde{M}}{2^{-n+1}T}} \Bigr ) 2^{-n+1} T - \tfrac{1}{\sqrt{\gamma}} \phi_n K ( 2^{-n+1} T)^{2/3}\Bigr ) .
\end{aligned}
	\end{equation}
By Lemma~\ref{Lemma_A.2}, the latter probability is bounded from below by
	 \begin{equation} 1 - C e^{-c (2^{n-1})^{1/6} K (2^{-n+1} T)^{1/3}}
	\geq 1 - C e^{-c (2^{n-1})^{1/2} K}.
	\end{equation}
	Since we have $2^{-n+1} T > T^{1/2}$ for all $n$, the constants exist uniformly for all times $T$ large enough, $n \in \{2, \dots, m\}$, and $\gamma$ in a closed subset of $(0,1)$. 	
\end{proof}

 With this last ingredient, we can derive the desired bound. Our key argument is depicted in Figure~\ref{figure_relation_TASEPs_step_IC_bound_unlikely_event}.

 \begin{prop} \label{prop_singleboundB_n,kA_n-1}
 	For $n \in \{2, \dots, m \}$ and $k \in \{1, \dots, 2^n-1 \}$ odd, it holds
 	\begin{equation} \Pb( B_{n,k} \cap A_{n-1}) \leq C e^{-c K (2^{n-1})^{1/6}}\end{equation} for all times $T$ large enough and $0 < K = \Or(T^{\eps})$ with $\eps\in (0, \tfrac{1}{12})$ fixed. The constants $C,c > 0$ can be chosen to be uniform in $T$, $n$, $k$ and for $\gamma$ in a closed subset of $(0,1)$.
 \end{prop}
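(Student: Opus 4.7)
The plan is to combine Lemma~\ref{lemma_prob_1_ordering_backwards_paths} (which dominates the backwards path of $x(t)$ by that of an auxiliary step TASEP), the lower bound $M > \tilde{M}$ from Lemma~\ref{lemma_minimal_M_region}, and the mid-time estimate of Theorem~\ref{thm-mid-time-estimate-paths} applied to the auxiliary step TASEP $x^{{\rm step},x_1}(t_1,\cdot)$.

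First, I would introduce the event $\mathcal{E} = \{M > \tilde{M}\}$. On $A_{n-1}\cap\mathcal{E}$, Lemma~\ref{lemma_prob_1_ordering_backwards_paths} applied at time $\tau = t$ (with $x(t)$ and $x^{{\rm step},x_1}(t_1,\cdot)$ coupled by basic coupling) yields
\begin{equation}
x_{N(T\downarrow t)}(t) \leq x^{{\rm step},x_1}_{M(t_2\downarrow t)}(t_1,t),
\end{equation}
and since $\tilde{M} < M$ forces $x^{{\rm step},x_1}_{M(t_2\downarrow\tau)}(t_1,\tau)\leq x^{{\rm step},x_1}_{\tilde{M}(t_2\downarrow\tau)}(t_1,\tau)$ for all $\tau\in[0,t_2]$ (larger starting labels stay weakly to the left on the backwards path, cf.\ the ordering remark preceding Section~\ref{section_2.2}), I obtain
\begin{equation}
\Pb(B_{n,k}\cap A_{n-1}) \leq \Pb(\mathcal{E}^c) + \Pb\bigl(x^{{\rm step},x_1}_{\tilde{M}(t_2\downarrow t)}(t_1,t) > l(t) + u_n T^{2/3}\bigr).
\end{equation}
By Lemma~\ref{lemma_minimal_M_region}, the first term is at most $Ce^{-c(2^{n-1})^{1/2}K}$, which is dominated by the target bound.

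Second, I would control the remaining probability by Theorem~\ref{thm-mid-time-estimate-paths} applied to $x^{{\rm step},x_1}(t_1,\cdot)$ with time horizon $T' := t_2-t_1 = 2^{-n+1}T$, label $\tilde{M} = \alpha T'$ with $\alpha = \gamma - \phi_n K(T')^{-1/3}$, and mid-time $t = t_1 + T'/2$. Because $l$ is linear, $l(t_1) + l(T'/2) = l(t)$, and using $x_1 = l(t_1) + u_{n-1}T^{2/3}$ the law-of-large-numbers mid-time position $x_1 + (1-2\sqrt{\alpha})\tfrac{T'}{2}$ expands to
\begin{equation}
l(t) + u_{n-1}T^{2/3} + \tfrac{\phi_n K}{2\sqrt{\gamma}}(T')^{2/3} + \Or\!\bigl(K^2(T')^{1/3}\bigr).
\end{equation}
With $\phi_n = \tfrac{\sqrt{\gamma}}{20}(2^{n-1})^{1/6}$ and $u_n - u_{n-1} = \tfrac{K}{10}2^{-(n-1)/2}$, a direct computation shows that the macroscopic position lies \emph{below} $l(t) + u_n T^{2/3}$ by a gap of order $K\cdot 2^{-(n-1)/2}T^{2/3}$. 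Matching this gap with the fluctuation scale $\tfrac{3}{\sqrt{\alpha}}K'(T')^{2/3}$ of Theorem~\ref{thm-mid-time-estimate-paths} gives $K' \asymp K(2^{n-1})^{1/6}$, and the theorem then produces the bound $Ce^{-cK(2^{n-1})^{1/6}}$ as required. This precise tuning of $\phi_n$ is what makes the geometric decay in $n$ work out.

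The main obstacle lies in verifying the admissibility conditions for Theorem~\ref{thm-mid-time-estimate-paths}: one needs $K' = \Or((T')^{1/3})$ and $\alpha$ in a closed subset of $(0,1)$. Substituting $K' \asymp K\,2^{(n-1)/6}$ and $(T')^{1/3} = 2^{-(n-1)/3}T^{1/3}$, the admissibility reduces to $K = \Or(2^{-(n-1)/2}T^{1/3})$. The dyadic cutoff $2^{-n+1}T \geq T^{1/2}$ yields $2^{-(n-1)/2}\geq T^{-1/4}$ in the worst case, which forces $K = \Or(T^{1/12})$; this is exactly where the hypothesis $\eps < \tfrac{1}{12}$ enters. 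Uniformity of $C,c>0$ in $T$, $n$, $k$, and in $\gamma$ on closed subsets of $(0,1)$ propagates directly from the corresponding uniform constants in Lemma~\ref{lemma_minimal_M_region} and Theorem~\ref{thm-mid-time-estimate-paths}.
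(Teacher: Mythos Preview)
Your proposal is correct and follows essentially the same approach as the paper: dominate the original backwards path by the auxiliary one via Lemma~\ref{lemma_prob_1_ordering_backwards_paths}, replace $M$ by the deterministic $\tilde M$ at cost $\Pb(M\le\tilde M)$ via Lemma~\ref{lemma_minimal_M_region}, and then apply Theorem~\ref{thm-mid-time-estimate-paths} to the auxiliary step TASEP with $K'\asymp K(2^{n-1})^{1/6}$. One small tightening worth noting: Theorem~\ref{thm-mid-time-estimate-paths} requires $K'=\Or((T')^{\eps'})$ for a \emph{fixed} $\eps'<\tfrac13$ rather than merely $\Or((T')^{1/3})$, but since $T'\ge T^{1/2}$ and $K'=\Or(T^{\eps+1/12})$ this holds with $\eps'=2\eps+\tfrac16<\tfrac13$, confirming that $\eps<\tfrac{1}{12}$ is exactly the right hypothesis.
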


 \begin{proof}
 	By our choice of the label $M$, we have
 	\begin{equation}\label{eq3.61}
 \begin{aligned}
 	& \Pb( B_{n,k} \cap A_{n-1}) \leq  \Pb(x_{N(T \downarrow t)}(t) > l(t) + u_n T^{2/3}, x_{N(T \downarrow t_1)}(t_1) \leq x_1, x_{N(T \downarrow t_2)}(t_2) \leq x_2) \\
 	& \leq  \Pb( x^{\textrm{step},x_1}_{M(t_2 \downarrow t)}(t_1, t) > l(t) + u_n T^{2/3}) \\
 	& \leq  \Pb( x^{\textrm{step}, x_1}_{\tilde{M}(t_2 \downarrow t)}(t_1, t) > l(t) + u_n T^{2/3}) + \Pb( M \leq \tilde{M}) \\
 	& \leq  \Pb( x^{\textrm{step}}_{\tilde{M}(t_2 - t_1 \downarrow {{t_2 - t_1} \over 2})} ( \tfrac{t_2 - t_1}{2}) > l(t) - x_1 + u_n T^{2/3} ) + \Pb( M \leq \tilde{M}),
\end{aligned}
 	\end{equation}
 where the second inequality holds by Lemma~\ref{lemma_prob_1_ordering_backwards_paths} and for the fourth inequality, we use
 \begin{equation} x^{\textrm{step}, x_1}_{\tilde{M}(t_2 \downarrow t)}(t_1, t) \overset{(d)}{=} x^{\textrm{step}}_{\tilde{M}(t_2 - t_1 \downarrow {{t_2 - t_1} \over 2})} ( \tfrac{t_2 - t_1}{2}) + x_1 \end{equation} since $t = \tfrac{t_1+t_2}{2}$. Furthermore, by Lemma~\ref{lemma_minimal_M_region}, the right hand side of \eqref{eq3.61} is bounded from above by	\begin{equation}
 \begin{aligned}
&\Pb\left( x^{\textrm{step}}_{\tilde{M}( 2^{-n+1} T \downarrow 2^{-n } T) } ( 2^{-n} T) > (1-2 \sqrt{ \gamma }) 2^{-n} T + \tfrac{K}{10} (2^{n-1})^{1/6} (2^{-n+1} T)^{2/3}\right) \\
 	& \quad + C e^{- c (2^{n-1} )^{1/2} K} \\
 	& \leq  \Pb \Bigl ( x^{\textrm{step}}_{\tilde{M}( 2^{-n+1} T \downarrow 2^{-n } T) } ( 2^{-n} T) > \Bigl (1- 2 \sqrt{\tfrac{\tilde{M}}{2^{-n+1}T}} \Bigr ) 2^{-n} T + \tfrac{K}{20} (2^{n-1})^{1/6} (2^{-n+1} T)^{2/3} \Bigr ) \\
 	&\quad + C e^{- c (2^{n-1} )^{1/2} K}
\end{aligned}
 	\end{equation}
 for all $T$ large enough. The last inequality is due to series expansion and the choice of $\phi_n$ in Lemma~\ref{lemma_minimal_M_region}.

 	To bound the probability in the last term, we apply Theorem~\ref{thm-mid-time-estimate-paths} with
 	\begin{equation}
 	T \mapsto 2^{-n+1} T,\quad \alpha \mapsto \gamma - \phi_n K (2^{-n+1}T)^{-1/3},\quad \tfrac{3}{\sqrt{\alpha}} K \mapsto \tfrac{K}{20} (2^{n-1})^{1/6}.
 	\end{equation}
 	Here, it is essential that we have $ \tfrac{K}{20} (2^{n-1})^{1/6} = \Or(T^\delta)$ with $\delta \in (0, \tfrac{1}{6})$, since this implies \begin{equation}\tfrac{K}{20} (2^{n-1})^{1/6} = \Or ((2^{-n+1}T)^{2\delta}) \textrm{ with } 2 \delta < \tfrac{1}{3}\end{equation} for all $n \in \{2, \dots, m\}$. In particular, the constants $C,c > 0$ exist uniformly for all $T$ large enough, $n \in \{2, \dots, m\}$, and $\gamma$ in a closed subset of $(0,1)$. We conclude
 	\begin{equation}
 	\Pb( B_{n,k } \cap A_{n-1}) \leq C e^{-c K (2^{n-1})^{1/6}} + C e^{-c K (2^{n-1})^{1/2}} \leq C e^{-c K (2^{n-1})^{1/6}}.
 	\end{equation}
 \end{proof}

\begin{cor} \label{cor_boundsumBnkAn1}
Let $\eps\in (0, \tfrac{1}{12})$ be fixed. Then, there exist constants $C,c > 0$ such that for all times $T$ large enough, it holds
\begin{equation}
\sum_{n=2}^m \sum_{k=1}^{2^n-1} \Pb(B_{n,k} \cap A_{n-1}) \leq C e^{-c K}
\end{equation}
for $ 1 \leq K = \Or(T^\eps)$. The constants can be chosen uniformly for large $T$ and for $\gamma$ in a closed subset of $(0,1)$.
\end{cor}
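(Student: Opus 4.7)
The plan is to reduce the double sum to a one-parameter sum over $n$ using Proposition~\ref{prop_singleboundB_n,kA_n-1} and the observation from the proof of Proposition~\ref{prop_right_fluctuations_backwards_path} that $B_{n,k}\cap A_{n-1}=\varnothing$ whenever $k$ is even, and then to dominate the resulting sum by $Ce^{-cK}$ through a simple splitting of the exponent.

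First I would recall the two structural facts already established. If $k\in\{1,\ldots,2^{n}-1\}$ is even, then $k2^{-n}T$ is an edge point of the dyadic partition at level $n-1$, so the constraint that the backwards path lies to the left of the reference line at that time is incorporated in $A_{n-1}$. Consequently $B_{n,k}\cap A_{n-1}=\varnothing$ and contributes nothing. This leaves exactly $2^{n-1}$ odd values of $k$ in the inner sum, each of which by Proposition~\ref{prop_singleboundB_n,kA_n-1} satisfies
\begin{equation}
\Pb(B_{n,k}\cap A_{n-1})\leq C e^{-cK(2^{n-1})^{1/6}},
\end{equation}
with constants uniform in $n,k$, in $T$ large, and in $\gamma$ ranging over a closed subset of $(0,1)$.

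Collecting these per-term bounds gives
\begin{equation}
\sum_{n=2}^{m}\sum_{k=1}^{2^{n}-1}\Pb(B_{n,k}\cap A_{n-1}) \leq C\sum_{n=2}^{m} 2^{n-1} e^{-cK(2^{n-1})^{1/6}}.
\end{equation}
Since $K\geq 1$ and $(2^{n-1})^{1/6}\geq 1$ for $n\geq 2$, the exponent splits as $cK(2^{n-1})^{1/6}\geq \tfrac{c}{2}K+\tfrac{c}{2}(2^{n-1})^{1/6}$, so
\begin{equation}
\sum_{n=2}^{m} 2^{n-1}e^{-cK(2^{n-1})^{1/6}} \leq e^{-cK/2}\sum_{n=2}^{\infty}2^{n-1}e^{-\tfrac{c}{2}(2^{n-1})^{1/6}}.
\end{equation}
The remaining series converges because the stretched-exponential factor dominates the geometric prefactor, and its value is a finite constant independent of $T$, $K$ and $\gamma$. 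Absorbing this constant into $C$ and replacing $c/2$ by $c$ yields the claimed bound $Ce^{-cK}$.

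There is no real obstacle here beyond bookkeeping: all the work has been done in Proposition~\ref{prop_singleboundB_n,kA_n-1}, and the only thing to check is that the stretched-exponential decay in $n$ beats the factor $2^{n-1}$. Uniformity in $\gamma$ on compact sub-intervals of $(0,1)$ and in large $T$ is inherited directly from Proposition~\ref{prop_singleboundB_n,kA_n-1}, while the hypothesis $K=\Or(T^{\eps})$ with $\eps\in(0,\tfrac{1}{12})$ is needed only to invoke that proposition and plays no role in the summation itself.
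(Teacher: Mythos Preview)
Your proof is correct and follows essentially the same approach as the paper: both reduce to odd $k$, apply Proposition~\ref{prop_singleboundB_n,kA_n-1}, and then bound the resulting sum $\sum_{n\geq 2} 2^{n-1} e^{-cK(2^{n-1})^{1/6}}$. The only minor difference is that the paper finishes the summation by appealing to a bound on the exponential integral from~\cite{Gau59}, whereas your elementary splitting $cK(2^{n-1})^{1/6}\geq \tfrac{c}{2}K+\tfrac{c}{2}(2^{n-1})^{1/6}$ (valid since both factors are $\geq 1$) is more direct and avoids the external reference.
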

\begin{proof}
As seen in the proof of Proposition~\ref{prop_right_fluctuations_backwards_path}, we have $\Pb( B_{n,k} \cap A_{n-1}) = 0$ for even $k$. Combining this with Proposition~\ref{prop_singleboundB_n,kA_n-1} yields
\begin{equation}
\sum_{n=2}^m \sum_{k=1}^{2^n-1} \Pb(B_{n,k} \cap A_{n-1})
\leq \sum_{n=2}^m 2^n C e^{-c K (2^{n-1})^{1/6}}
\leq C e^{-c K}
\end{equation}
for times $T$ large enough, with uniform constants $C,c > 0$ for large $T$ and for $\gamma$ in a closed subset of $(0,1)$. The second inequality is obtained by adjusting the constants suitably, since an exponentially decreasing function dominates a polynomial prefactor. Further, we apply the bound on the exponential integral proven in~\cite{Gau59} and use $K \geq 1$.
\end{proof}

\subsection{Control of fluctuations to the left}\label{sect4.3}
Given Proposition~\ref{prop_right_fluctuations_backwards_path}, we can complete the proof of Proposition~\ref{prop_localization_backwards_path} with help of the particle-hole duality in TASEP. This technique has for example been utilized in~\cite{BBF21, BF22, Nej20}.
\begin{proof}[Proof of Proposition~\ref{prop_localization_backwards_path}]
We recall
\begin{equation}
\begin{aligned}
{\cal C}^r&=\{(x,t) \in \Z \times [0,T] \ | \ x\leq (1-2 \sqrt{\gamma}) t +K T^{2/3}\},\\
{\cal C}^l&=\{(x,t) \in \Z \times [0,T] \ | \ x\geq (1-2 \sqrt{\gamma}) t -K T^{2/3}\},
\end{aligned}
\end{equation}
and define
\begin{equation}
E_{{\cal C}^r}=\{x_{N(T\downarrow t)}(t)\leq (1-2 \sqrt{\gamma}) t +K T^{2/3}\textrm{ for all }0 \leq t \leq T\}.
\end{equation}
Recall that the process $\tilde{x}^r(t)$ with $\tilde{x}^r(0) = x(0)$ is constructed by the same Poisson events as $x(t)$ in ${\cal C}^r$ and with $({\cal C}^r)^c$ completely filled by holes. Likewise, $\tilde{x}^l(t)$ is the process with $\tilde{x}^l(0) = x(0)$, constructed by the same Poisson events as $x(t)$ in ${\cal C}^l$ and with $({\cal C}^l)^c$ completely filled by particles.
Then, Lemma~\ref{lemma_tagged_particle_position_only_depends_on_backwards_path} implies
\begin{equation}
\Pb(x_N(T)=\tilde{x}^r_N(T)|E_{{\cal C}^r})=1 \label{eq.4.72}
\end{equation}
and Proposition~\ref{prop_right_fluctuations_backwards_path} yields
	\begin{equation}
\Pb(E_{{\cal C}^r})\geq 1- C e^{-c K}.
\end{equation}

It remains to accomplish a similar statement for $\tilde{x}_N^l(T)$. For this, we compare the particle positions in the TASEP with step initial condition, described by $x(t)$, to the positions of the holes. We denote the process formed by them by $x^h(t)$, and we determine their labelling by setting $x^h_M(0) = M$ for $M \in \N$. The process $x^h(t)$ represents a TASEP with step initial condition shifted by $1$ which is mirrored at the origin, such that its particles (the holes) jump to the left.

For all $N, S \in \N$, it holds $x_N(T) = S - N + 1 = x_N(0) + S$ if and only if at time $T$, there are exactly $S$ holes to the left of $x_N(T)$. That is, we have
\begin{equation}
\{ x_N(T) = S-N+1 \} = \{ x^h_S(T) < x_N(T) < x^h_{S+1}(T) \}.
\end{equation}
Suppose $x^h_M(T) < x_N(T)$ is valid for some label $M \in \N$. Then, $x_N(T)$ is determined by the positions $\{x^h_k(T), k \geq M\}$ of the holes to the right of $x^h_M(T)$.

If $x^h_M(T)$ is independent of a family of Poisson events in the underlying graphical construction located in $({\cal C}^l)^c$,
then the same holds for $\{x^h_k(T), k \geq M\}$ and consequently for $x_N(T)$. We denote by $\tilde{x}^{l,h}(t)$ the process formed by the holes in the process $\tilde{x}^l(t)$ and define
\begin{equation}
E_{M,{\cal C}^l}=\{x^h_M(T)< x_N(T)\}\cap \{\forall\, 0 \leq t \leq T: x^h_{M(T\downarrow t)}(t)\geq (1-2 \sqrt{\gamma}) t -K T^{2/3}\}.
\end{equation}
In $\tilde{x}^{l,h}(t)$, the holes in ${\cal C}^l$ evolve by the same Poisson events as those in $x^h(t)$, but $({\cal C}^l)^c$ is completely filled by particles. Thus, combining the previous arguments with Lemma~\ref{lemma_tagged_particle_position_only_depends_on_backwards_path}, see also Remark~\ref{remark_on_lemma_3.1}, we obtain: if $E_{M,{\cal C}^l}$ occurs, then $\tilde{x}^{l,h}_k(T)=x^h_k(T)$ for all $k\geq M$ and thus also $\tilde{x}^l_N(T)=x_N(T)$. This means that
\begin{equation}
\Pb(x_N(T)=\tilde{x}^l_N(T)|E_{M,{\cal C}^l})=1. \label{eq.4.76}
\end{equation}

Setting $E=E_{{\cal C}^r}\cap E_{M,{\cal C}^l}$ and putting \eqref{eq.4.72} and \eqref{eq.4.76} together, we find
\begin{equation}
\Pb(x_N(T)=\tilde{x}^r_N(T)=\tilde{x}^l_N(T)|E)=1.
\end{equation}

It remains to bound $\Pb(E_{M,{\cal C}^l})$ from below for some suitable $M$.
We choose the label $M \in \N$ such that the event $\{ x^h_M(T) < x_N(T) \}$ has a high probability, but the macroscopic position of the backwards path $x^h_{M(T \downarrow t)}(t)$ is still within an $o(T^{2/3})-$neighbourhood of the macroscopic position of the backwards path $x_{N(T \downarrow t)}(t)$. Since $N = \gamma T$ with $\gamma \in (0,1)$, we choose
	$ M = (1- \sqrt{\gamma})^2 T - T^{1/2}$, see Figure~\ref{figure_left_fluctuations_backwards_path}.
\begin{figure}[t!]
		\centering
		\includegraphics[scale=1]{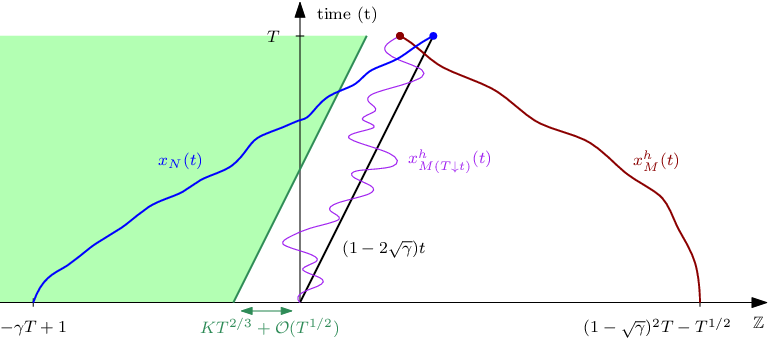}
		\caption{Controlling the fluctuations of the backwards path of $x^h_M(T)$ to the left, we observe that with high probability $x^h_M(T)$ is independent of the Poisson events in the green region. Then, the same applies to $x_N(T)$.  }
		\label{figure_left_fluctuations_backwards_path}
\end{figure}

Then, Lemma~\ref{Lemma_A.2} (with $\alpha = \gamma$ and $c_1(\alpha)s = T^{1/6}$) yields
	\begin{equation} \label{eq4.79}
	\Pb( x^h_M(T) < x_N(T)) = \Pb( x_N(T) \geq M - N +1 )
	\geq  1 - C e^{-c T^{1/6}}
	\end{equation}
	for all $T$ large enough.

Next, we want to apply Proposition~\ref{prop_right_fluctuations_backwards_path} to the backwards path of the hole $x^h_M(T)$. Since it jumps to the left, Proposition~\ref{prop_right_fluctuations_backwards_path} provides control of the fluctuations of its backwards path to the left as well. As already mentioned, $x^h(t)$ evolves like a TASEP with step initial condition mirrored at the origin. Specifically, for $M_\beta = \beta T$ with $\beta \in (0,1)$, Proposition~\ref{prop_right_fluctuations_backwards_path} implies
	\begin{equation}
	 \Pb\Bigl( \inf_{ t \in [0,T]} \{x^h_{M_\beta (T \downarrow t)}(t) + (1-2 \sqrt{\beta }) t \} < - K T^{2/3} \Bigr)
	 \leq  C e^{-c K} \label{ineq_appl_right_fluctuations_backwards_paths_for_holes}
	\end{equation}
	for constants $C,c > 0$ that are uniform for all $T$ large enough and for $\beta$ in a closed subset of $(0,1)$. In our case, we have $ \beta = (1-\sqrt{\gamma})^2 - T^{-1/2}$, which fulfils this condition for all $T$ large enough. By series expansion, we derive
	\begin{equation}
 (1-2 \sqrt{ \beta} )
	= - (1-2\sqrt{\gamma}) + \tfrac{1}{1-\sqrt{\gamma}} T^{-1/2} + \Or(T^{-1})
	\end{equation}
	and therefore
	\begin{equation}
	\Pb\Bigl( \inf_{ t \in [0,T]}\{x^h_{M(T \downarrow t)}(t) - ( 1 - 2 \sqrt{\gamma} )t \} < - K T^{2/3} \Bigr) \leq C e^{-c K}
	\end{equation}
	with uniform constants $C, c > 0$ for $T$ large enough and $\gamma$ in a closed subset of $(0,1)$. This concludes the proof of Proposition~\ref{prop_localization_backwards_path}.
\end{proof}

\section{Functional slow decorrelation}\label{SectSlowDec}
The fluctuations of particle positions at a macroscopic time $t$ are of order $t^{1/3}$. However, fluctuations along characteristic lines (macroscopic lines around which the backwards path fluctuates), at times differing on a mesoscopic scale, are the same up to $o(t^{1/3})$. This is called slow decorrelation phenomenon and will be needed in the proof of our main result. Since the starting formula \eqref{eq1.2} contains the full process, a finite-dimensional slow decorrelation result as in~\cite{Fer08,CFP10b} is not enough. We need to upgrade it to a functional slow decorrelation statement as first made in~\cite{CLW16}, although unlike them, we use tightness directly without a comparison to auxiliary processes.

In our case, we need to argue that what happens in a mesoscopic time at the beginning of the evolution is not creating relevant fluctuations as they are in $o(T^{1/3})$. For this reason, we first show a slow decorrelation result for the following rescaled tagged particle processes:
\begin{equation}\tilde{X}_T^i (\tau) = \frac{x_{\alpha T}(t) - \tilde{\mu}^i(\tau,T)}{- \tilde{c}_1^i T^{1/3}}
\end{equation}
and
\begin{equation}\tilde{X}_T^{\nu, i}(\tau) = \frac{ y^{x_i}_{\alpha T - {\alpha \over \alpha_i} T^\nu}(T^\nu, t) + \Bigl(1-2 \sqrt{\tfrac{\alpha}{\alpha_i}}\Bigr)T^\nu - \tilde{\mu}^{i}(\tau,T)}{- \tilde{c}_1^{i} T^{1/3}},
\end{equation}
where $x_i = x_{ {\alpha \over \alpha_i} T^\nu} (T^\nu)  $
and $t = \alpha_i T - \tilde{c}_2^{i} \tau T^{2/3}$ with $\tau \in [-\varkappa,\varkappa]$, for some fixed $\varkappa > 0$ and $\nu \in (0,1)$. The process $y^{x_i}(T^\nu,t)$ emerges from \eqref{estimate_Sep98_backwards_paths}.

We obtain the result by combining pointwise slow decorrelation and tightness.

\begin{prop} \label{prop_functional_slow_decorrelation}
	For $i \in \{0,1\}$ and all $\eps > 0$, it holds
\begin{equation}
	\lim_{T \to \infty} \Pb( | \tilde{X}_T^{\nu,i} (\tau) - \tilde{X}_T^{i}(\tau)| \geq \eps \ \text{ for some } \tau \in [- \varkappa,  \varkappa]) = 0.
\end{equation}
\end{prop}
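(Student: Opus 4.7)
The plan is the classical slow decorrelation strategy: establish a one-sided uniform bound via Sepp\"al\"ainen's concatenation identity, derive pointwise convergence using that both processes share the same Airy$_2$ limit, and finally upgrade to uniform convergence using tightness.

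\textbf{One-sided uniform bound from concatenation.} First I would apply \eqref{estimate_Sep98_backwards_paths} at the intermediate time $T^\nu$ with the specific label $n_0 := \lfloor \alpha T^\nu/\alpha_i \rfloor$ (the label through which the characteristic to $(\alpha T,\alpha_i T)$ passes at time $T^\nu$). This gives
\begin{equation}
x_{\alpha T}(t) \;\leq\; x_{n_0}(T^\nu) + y_{\alpha T - n_0 + 1}^{x_{n_0}(T^\nu)}(T^\nu, t),
\end{equation}
valid for all $\tau$ simultaneously. Dividing by $-\tilde{c}_1^i T^{1/3}$ and rearranging, the right-hand side produces exactly the numerator of $\tilde{X}_T^{\nu,i}(\tau)$ plus the $\tau$-independent correction $x_{n_0}(T^\nu) - (1-2\sqrt{\alpha/\alpha_i})T^\nu$. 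Hence
\begin{equation}
\tilde{X}_T^i(\tau) - \tilde{X}_T^{\nu,i}(\tau) \;\geq\; -R_T,
\qquad R_T := \frac{|x_{n_0}(T^\nu) - (1-2\sqrt{\alpha/\alpha_i})T^\nu|}{\tilde{c}_1^i T^{1/3}} + o_P(1),
\end{equation}
where the $o_P(1)$ absorbs the negligible shift from rounding $n_0$. By Lemma~\ref{Lemma_A.2}, the numerator is $O_P(T^{\nu/3})$, so $R_T = O_P(T^{(\nu-1)/3}) = o_P(1)$ because $\nu < 1$. Crucially, this lower bound is uniform in $\tau$.

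\textbf{Pointwise convergence.} At each fixed $\tau$, Lemma~\ref{corollary_4.1_of_BBF22_tightness_weak_conv_Xtilde} gives $\tilde{X}_T^i(\tau) \Rightarrow \mathcal{A}_2(\tau) - \tau^2$. The same limit holds for $\tilde{X}_T^{\nu,i}(\tau)$: the process $y^{x_i}(T^\nu,\cdot)-x_i$ is a step-IC TASEP starting at time $T^\nu$, and since $T^\nu = o(T)$ the KPZ scaling matches that of $\tilde{X}_T^i$ up to the $O_P(T^{\nu/3})$ fluctuation of $x_i-(1-2\sqrt{\alpha/\alpha_i})T^\nu$, which is $o_P(1)$ after division by $\tilde{c}_1^i T^{1/3}$. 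Combining with the uniform lower bound above and the standard sandwich fact—if $A_T \leq B_T$ pointwise and both converge in law to the same limit, then $B_T - A_T \to 0$ in probability, via joint tightness and a.s.\ equality of margins—we obtain $\tilde{X}_T^i(\tau) - \tilde{X}_T^{\nu,i}(\tau) \to 0$ in probability for every $\tau \in [-\varkappa,\varkappa]$.

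\textbf{Tightness and the upgrade.} The process $\tilde{X}_T^i$ is $J_1$-tight on $\mathbb{D}([-\varkappa,\varkappa])$ by Lemma~\ref{corollary_4.1_of_BBF22_tightness_weak_conv_Xtilde}. The same argument applies to $\tilde{X}_T^{\nu,i}$: the comparison with stationary TASEP under clock coupling (Proposition~\ref{prop_comparison_increments_of_particles}) can be transported to the restarted TASEP $y^{x_i}(T^\nu,\cdot)$, since up to an $o_P(1)$ shift in the initial position it is again a step-IC TASEP with matching limiting density and fluctuation scale. Hence $D_T := \tilde{X}_T^i - \tilde{X}_T^{\nu,i}$ is $J_1$-tight. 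Together with pointwise convergence to $0$, every $J_1$-subsequential limit of $D_T$ is identically zero, so $D_T \Rightarrow 0$ in $J_1$. Since the limit is continuous, $J_1$-convergence implies uniform convergence, so $\sup_{\tau \in [-\varkappa,\varkappa]} |D_T(\tau)| \to 0$ in probability, which is the claim.

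\textbf{Main obstacle.} The delicate step is the tightness of $\tilde{X}_T^{\nu,i}$: one has to check that the stationary-TASEP comparison machinery of Section~\ref{sectCoupling} really goes through for the process restarted at time $T^\nu$ from the random configuration $y^{x_i}(T^\nu,T^\nu)$, with all errors staying on the KPZ fluctuation scale $T^{1/3}$. This is precisely where $\nu<1$ is needed, so that the $O(T^{\nu/3})$ law-of-large-numbers fluctuations of the restart position are negligible compared to $T^{1/3}$.
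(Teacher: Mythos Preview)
Your proof is correct and follows the same three-step structure as the paper's argument: the one-sided concatenation inequality from \eqref{estimate_Sep98_backwards_paths}, pointwise slow decorrelation via the sandwich lemma (the paper cites Lemma~4.1 of~\cite{BC09}), and the upgrade to uniform convergence through tightness of the difference process.

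The one place where you and the paper diverge is the tightness of $\tilde{X}_T^{\nu,i}$, which you flag as the ``main obstacle''. You propose to re-run the clock-coupling comparison of Proposition~\ref{prop_comparison_increments_of_particles} for the restarted process. This would work, but the paper avoids it entirely: since $x_i$ and $T^\nu$ do not depend on $\tau$, one has the joint distributional identity $(y^{x_i}_{\alpha T-\alpha T^\nu/\alpha_i}(T^\nu,t))_t \overset{(d)}{=} (x^{\mathrm{step}}_{\alpha T-\alpha T^\nu/\alpha_i}(t-T^\nu))_t$, and setting $\tilde T=T-T^\nu/\alpha_i$ this gives $(\tilde{X}_T^{\nu,i}(\tau))\overset{(d)}{=}(\tilde{X}_{\tilde T}^{i}(c_{\tilde T}\tau)\,d_{\tilde T})$ with deterministic $c_{\tilde T},d_{\tilde T}\to 1$. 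Tightness then follows immediately from the already-established tightness of $\tilde{X}^i$ (the Billingsley Theorem~15.5 criteria are stable under such rescalings). So what you identify as the delicate step is in fact free, and no transport of the stationary-comparison machinery is needed.
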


\begin{proof}
	
	By \eqref{estimate_Sep98_backwards_paths}, we know:
\begin{equation}
	x_{\alpha T}(t) \leq x_{ {\alpha \over \alpha_i} T^\nu} (T^\nu) + y^{x_i}_{\alpha T - {\alpha \over \alpha_i} T^\nu}(T^\nu, t). \label{eq_pf_slow_decorr_concatenation}
\end{equation}
	Further, the laws of large numbers of the respective particle positions match up to $o(T^{1/3})$, since by series expansion we have
	\begin{equation}
	\begin{aligned}
	& \biggl ( 1 - 2 \sqrt{ \tfrac{ \alpha T - {\alpha \over \alpha_i} T^\nu}{t - T^\nu}} \biggr ) (t - T^\nu) \\
	= \ & \Bigl ( 1 - 2 \sqrt{ \tfrac{\alpha T}{t}} \Bigr ) t - \Bigl ( 1 - \sqrt{ \tfrac{\alpha T}{t}} - \tfrac{\alpha}{\alpha_i} \sqrt{ \tfrac{t}{\alpha T}} \Bigr )T^\nu + \Or(T^{2\nu - 5/3})  \\
	= \ & \Bigl ( 1 - 2 \sqrt{ \tfrac{\alpha T}{t}} \Bigr ) t - \Bigl ( 1 - 2 \sqrt{ \tfrac{\alpha}{\alpha_i}} \Bigr ) T^\nu + \Or(T^{\nu - 2/3}) \label{eq_pf_slow_decorr_lln}
	\end{aligned} \end{equation}
	with $T^{\nu - 2/3} = o(T^{1/3})$.  Thus, pointwise slow decorrelation (see \eqref{eq_pf_slow_decorr_pointwise_result} below) can be obtained by the usual method proposed in~\cite{CFP10b}, see also~\cite{BF22,Fer18,FN19}. By \eqref{eq_Lemma_A.2_converging_alpha}, for each fixed $\tau$ (recall that then $t=\Or(T)$) we have
		\begin{align}
	\tfrac{x_{\alpha T}(t) - \Bigl( 1 - 2 \sqrt{ \tfrac{\alpha T}{t}}\Bigr) t}{- \tilde{c}_1^{i} T^{1/3}} \Rightarrow \ &F_{2}, \\
	\tfrac{y^{x_i}_{\alpha T - {\alpha \over \alpha_i} T^\nu}(T^\nu,t) - \biggl ( 1 - 2 \sqrt{ \tfrac{ \alpha T - {\alpha \over \alpha_i} T^\nu}{t - T^\nu}} \biggr ) (t - T^\nu)}{- \tilde{c}_1^{i} T^{1/3} } \Rightarrow \ &F_{2}, \\
	\tfrac{x_{{\alpha \over \alpha_i}T^\nu}(T^\nu) - \bigl ( 1 - 2 \sqrt{ \tfrac{\alpha}{\alpha_i}} \bigr) T^\nu}{- \tilde{c}_1^{i} T^{\nu / 3}} \Rightarrow \ & F_{2}
	\end{align}	
as $T\to\infty$, where $F_2$ denotes the GUE Tracy-Widom distribution function. By \eqref{eq_pf_slow_decorr_concatenation} and \eqref{eq_pf_slow_decorr_lln}, we obtain
	\begin{equation}
	\begin{aligned}  \tfrac{x_{\alpha T}(t) - \Bigl( 1 - 2 \sqrt{ \tfrac{\alpha T}{t}}\Bigr) t}{- \tilde{c}_1^{i} T^{1/3}}  \geq \ & \tfrac{y^{x_i}_{\alpha T - {\alpha \over \alpha_i} T^\nu}(T^\nu,t) - \biggl ( 1 - 2 \sqrt{ \tfrac{ \alpha T - {\alpha \over \alpha_i} T^\nu}{t - T^\nu}} \biggr ) (t - T^\nu)}{- \tilde{c}_1^{i} T^{1/3} } \\ & + \tfrac{x_{{\alpha \over \alpha_i}T^\nu}(T^\nu) - \bigl ( 1 - 2 \sqrt{ \tfrac{\alpha}{\alpha_i}} \bigr) T^\nu}{- \tilde{c}_1^{i} T^{ 1/ 3}} + o(1).
	\end{aligned} \end{equation} 	
	The second summand on the right hand side converges to $0$ in probability since we scale by $T^{1/3}$ instead of $T^{\nu/3}$. Hence, Slutzky's theorem, see Theorem 13.18 of~\cite{Kle20}, implies that both sides of the inequality converge weakly to the same limit. By Lemma 4.1 of~\cite{BC09}, their difference converges to $0$ in probability. Explicitly, this yields the pointwise slow decorrelation result: let $t=\alpha_i T - \tilde{c}_2^{i} \tau T^{2/3}$ with $\tau\in [-\varkappa,\varkappa]$ fixed. Then, for each $\e>0$
	\begin{equation}
	\lim_{T \to \infty} \Pb\Bigl(\Bigl| x_{\alpha T}(t) - y^{x_i}_{\alpha T - {\alpha \over \alpha_i} T^\nu}(T^\nu,t) - \Bigl ( 1 - 2 \sqrt{ \tfrac{\alpha}{\alpha_i}} \Bigr) T^\nu \Bigr| \geq \eps T^{1/3} \Bigr) = 0. \label{eq_pf_slow_decorr_pointwise_result}
	\end{equation}	
From \eqref{eq_pf_slow_decorr_pointwise_result}, we obtain convergence of $(\tilde{X}^{i}_T(\tau) - \tilde{X}^{\nu,i}_T(\tau))$ to $0$ in the sense of finite-dimensional distributions. By Lemma~\ref{corollary_4.1_of_BBF22_tightness_weak_conv_Xtilde}, $(\tilde{X}_T^{i}(\tau))$ is tight. Since $x_i$ as well as $T^\nu$ do not depend on $\tau$, we have 	
	\begin{equation} (y^{x_i}_{\alpha T - {\alpha \over \alpha_i} T^\nu}(T^\nu,t)) \overset{(d)}{=} (x^{\text{step}}_{\alpha T - {\alpha \over \alpha_i} T^\nu}(t - T^\nu)).\end{equation}	
	Therefore, defining $\tilde{T} = T - \tfrac{1}{\alpha_i}T^\nu$, we find	
	\begin{equation}
	(\tilde{X}^{\nu,i}_T(\tau)) \overset{(d)}{=} \biggl (\tfrac{x^{\text{step}}_{\alpha \tilde{T}}(\alpha_i \tilde{T} - \tilde{c}_2^{i} c_{\tilde{T}} \tau  \tilde{T}^{2/3}) - \tilde{\mu}^i(c_{\tilde{T}} \tau, \tilde{T})}{- \tilde{c}_1^{i} \tilde{T}^{1/3}} \ d_{\tilde{T}} \biggr ) \overset{(d)}{=} (\tilde{X}_{\tilde{T}}^{i}(c_{\tilde{T}}\tau)d_{\tilde{T}})
	\end{equation}	
	with $c_{\tilde{T}} = T^{2/3} \tilde{T}^{-2/3} \to 1$ and $d_{\tilde{T}} = \tilde{T}^{1/3} T^{-1/3} \to 1$ as $T \to \infty$. In~\cite{BBF21}, it is shown that $(\tilde{X}_T^{i}(\tau))$ satisfies the conditions of Theorem 15.5 of~\cite{Bil68}. Then, tightness of the process is preserved under the modifications of multiplying $\tau$ as well as the process itself by converging, deterministic sequences $c_{\tilde{T}}, d_{\tilde{T}}$. This yields tightness of $(\tilde{X}^{\nu,i}_T(\tau))$. If two processes fulfil the conditions of Theorem 15.5 of~\cite{Bil68}, then the same holds true for their difference. Thus, $(\tilde{X}^{i}_T(\tau) - \tilde{X}^{\nu,i}_T(\tau))$ is tight and converges to $0$ in the sense of finite-dimensional distributions. By Theorem 15.1 of~\cite{Bil68}, this gives weak convergence of $(\tilde{X}^{i}_T(\tau) - \tilde{X}^{\nu,i}_T(\tau))$ to $0$ in the space of c\`adl\`ag functions on $[-\varkappa,\varkappa]$. As the limit is continuous, we also have weak convergence with respect to the uniform topology. This concludes the proof of Proposition~\ref{prop_functional_slow_decorrelation}.	
\end{proof}

Proposition~\ref{prop_functional_slow_decorrelation} compares $(\tilde{X}_T^{i}(\tau))$ to the rescaled tagged particle process $(\tilde{X}^{\nu,i}_T(\tau))$ which, in law, evolves in a TASEP with the usual step initial condition (that is, with rightmost particle initially at zero) starting at time $T^\nu$. Still, comparing the processes for $i \in \{0,1\}$, it is useful to take the shift by $x_i$ in the initial condition into account again. This is due to the fact that $y^{x_i}(T^\nu,t)$, $i \in \{0,1\}$ are not coupled by basic coupling: when ${\cal P}_x$ describes the jump attempts of $x(t)$ at site $x \in \Z$, then it depicts the jump attempts of $y^{x_i}(T^\nu,t)$ at site $x-x_i$. For this reason, we define
\begin{equation}
\hat{X}_T^{\nu,i}(\tau) = \frac{x^{\textup{step},x_i}_{\alpha T - {\alpha \over \alpha_i} T^\nu}(T^\nu,t) - \tilde{\mu}^i(\tau,T)}{-\tilde{c}_1^{i} T^{1/3}}
\end{equation}
and rephrase Proposition~\ref{prop_functional_slow_decorrelation} as follows:

\begin{cor} \label{cor_functional_slow_decorrelation}
For all $\eps > 0$, it holds
\begin{equation}
	\lim_{T \to \infty} \Pb( | \hat{X}_T^{\nu,i} (\tau) - \tilde{X}_T^{i}(\tau)| \geq \eps \ \text{ for some } \tau \in [- \varkappa,  \varkappa], i \in \{0,1\}) = 0.
\end{equation}
\end{cor}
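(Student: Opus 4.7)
The plan is to deduce Corollary~\ref{cor_functional_slow_decorrelation} from Proposition~\ref{prop_functional_slow_decorrelation} by showing that the difference $\tilde{X}_T^{\nu,i}(\tau) - \hat{X}_T^{\nu,i}(\tau)$ is actually a $\tau$-independent random variable that vanishes in probability after the $T^{1/3}$ rescaling, and then applying a union bound over $i\in\{0,1\}$.

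First I would use the definition $y^Z_n(\tau,t) = x^{\text{step},Z}_n(\tau,t) - Z$ recalled at the beginning of Section~\ref{sectCoupling}, with $Z = x_i = x_{\frac{\alpha}{\alpha_i}T^\nu}(T^\nu)$. Substituting this into the definition of $\tilde{X}_T^{\nu,i}(\tau)$, the numerators differ only by the deterministic-shape term
\begin{equation}
\Bigl(1 - 2\sqrt{\tfrac{\alpha}{\alpha_i}}\Bigr)T^\nu - x_i,
\end{equation}
which does not depend on $\tau$. Hence
\begin{equation}
\tilde{X}_T^{\nu,i}(\tau) - \hat{X}_T^{\nu,i}(\tau) = \frac{x_i - \bigl(1 - 2\sqrt{\alpha/\alpha_i}\bigr)T^\nu}{\tilde{c}_1^{i}\, T^{1/3}}
\end{equation}
uniformly in $\tau\in[-\varkappa,\varkappa]$.

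Next, I would invoke the GUE Tracy--Widom fluctuation bound for TASEP with step initial condition at macroscopic time $T^\nu$ (via Lemma~\ref{Lemma_A.2} or the one-point weak convergence used in Proposition~\ref{prop_functional_slow_decorrelation}): the random variable $x_i - (1-2\sqrt{\alpha/\alpha_i})T^\nu$ is of order $T^{\nu/3}$ in probability, uniformly in large $T$. Since $\nu\in(0,1)$, the rescaled difference is of order $T^{(\nu-1)/3}\to 0$, so for any $\eps>0$,
\begin{equation}
\lim_{T\to\infty}\Pb\Bigl(\sup_{\tau\in[-\varkappa,\varkappa]}|\tilde{X}_T^{\nu,i}(\tau) - \hat{X}_T^{\nu,i}(\tau)|\geq \eps/2\Bigr) = 0
\end{equation}
for each $i\in\{0,1\}$.

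Finally, I would combine this with Proposition~\ref{prop_functional_slow_decorrelation} via the triangle inequality: the event in the corollary is contained in the union of the events $\{|\hat{X}_T^{\nu,i}(\tau)-\tilde{X}_T^{\nu,i}(\tau)|\geq \eps/2\text{ for some }\tau\}$ and $\{|\tilde{X}_T^{\nu,i}(\tau)-\tilde{X}_T^{i}(\tau)|\geq \eps/2\text{ for some }\tau\}$, taken over $i\in\{0,1\}$. Each of the four resulting probabilities tends to zero, so the union does as well. I do not expect a real obstacle here: the only substantive input is the tightness of the one-point fluctuations of $x_i$ at scale $T^{\nu/3}$, which is strictly smaller than the $T^{1/3}$ scale of the corollary, and everything else is algebraic cancellation together with a union bound.
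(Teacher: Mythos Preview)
Your proposal is correct and follows exactly the same approach as the paper: the paper's proof is the one-liner ``This is a direct consequence of Proposition~\ref{prop_functional_slow_decorrelation} and the fact that the fluctuations of $x_i$ are of order $T^{\nu/3}=o(T^{1/3})$,'' and you have simply spelled out the algebraic identity $\tilde{X}_T^{\nu,i}-\hat{X}_T^{\nu,i}=(x_i-(1-2\sqrt{\alpha/\alpha_i})T^\nu)/(\tilde{c}_1^i T^{1/3})$, the tightness of $x_i$ at scale $T^{\nu/3}$, and the triangle-inequality/union-bound step that makes this rigorous.
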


\begin{proof}
This is a direct consequence of Proposition~\ref{prop_functional_slow_decorrelation} and the fact that the fluctuations of $x_i$ are of order $T^{\nu / 3} = o(T^{1/3})$.
\end{proof}

\section{Proof of the main results} \label{section_proof_of_the_main_results}
In determining the limit distribution of tagged particle fluctuations in TASEP with step initial condition and wall constraint, the crucial difference between the
cases of one or several wall influences is that in the latter, asymptotic independence of the suprema of the processes $(\tilde{X}_T^{i}(\tau))_{\tau \in [-\varkappa,\varkappa]}, i \in \{0,1\}$, is needed.

\begin{prop} \label{prop_on_asymptotic_independence_main_result_chapter4}
	For any $\varkappa > 0$ and all $s_0,s_1 \in \R$, it holds
	\begin{equation}
\begin{aligned}
	& \lim_{ T \to \infty} \Pb\Bigl( \sup_{\tau \in [-\varkappa,\varkappa]} \{\tilde{X}_T^{i}(\tau) + \tau^2 - g_T^{i}(\tau)\} \leq s_i , i \in \{0,1\} \Bigr) \\
 	& =  \prod_{i\in\{0,1\}}\lim_{T\to\infty} \Pb\Bigl( \sup_{\tau \in [-\varkappa,\varkappa]} \{\tilde{X}_T^{i}(\tau) + \tau^2 - g_T^{i}(\tau)\}\leq s_i \Bigr ).
\end{aligned}
	\end{equation}
\end{prop}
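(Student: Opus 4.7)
The plan is to combine the functional slow decorrelation of Corollary~\ref{cor_functional_slow_decorrelation} with the backwards-path localization of Proposition~\ref{prop_localization_backwards_path} to show that, up to negligible error, the two suprema depend on Poisson clocks sitting at disjoint subsets of $\Z\times\R_{\geq 0}$, so asymptotic independence follows from the same logic as in Corollary~\ref{CorIndependence}.

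\textbf{Reduction and choice of scales.} Set $\hat\Sigma_T^i := \sup_{\tau \in [-\varkappa,\varkappa]}\{\hat X_T^{\nu,i}(\tau) + \tau^2 - g_T^i(\tau)\}$. By Corollary~\ref{cor_functional_slow_decorrelation}, replacing $\tilde X_T^i$ by $\hat X_T^{\nu,i}$ produces a uniform-in-$\tau$ error which tends to $0$ in probability, so combined with the uniform convergence of $g_T^i$ it suffices to prove the factorisation for $\hat\Sigma_T^0,\hat\Sigma_T^1$ up to an $\eps$-error. Fix $\nu \in (2/3, 1)$ and $\delta \in (0, \nu - 2/3)$, and for $i \in \{0,1\}$ define the space-time tube
\begin{equation}
\mathcal{T}_i = \Bigl\{(x,s)\in\Z\times[T^\nu,\alpha_iT+\tilde c_2^i\varkappa T^{2/3}] : \bigl|x - x_i - (1 - 2\sqrt{\alpha/\alpha_i})(s - T^\nu)\bigr|\leq T^{2/3+\delta}\Bigr\}.
\end{equation}
The LLN gives $x_i = (1 - 2\sqrt{\alpha/\alpha_i})T^\nu + \Or(T^{\nu/3})$, and since $\alpha_0 < \alpha_1$ the distance between the central lines of $\mathcal{T}_0$ and $\mathcal{T}_1$ at any $s\in[T^\nu,\alpha_0 T]$ is $\Theta(s) \geq \Theta(T^\nu) \gg T^{2/3+\delta}$, hence $\mathcal{T}_0 \cap \mathcal{T}_1 = \varnothing$ for $T$ large.

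\textbf{Localization and factorisation.} For each $i$ and each fixed $\tau \in [-\varkappa,\varkappa]$, apply Proposition~\ref{prop_localization_backwards_path} to the TASEP $y^{x_i}(T^\nu,\cdot)$ with density parameter $\alpha/\alpha_i + o(1)$ and $K = T^\delta$ to bound by $C e^{-c T^\delta}$ the probability that the backwards path of the particle $N_i = \alpha T - (\alpha/\alpha_i)T^\nu$ at time $t_\tau^i = \alpha_i T - \tilde c_2^i \tau T^{2/3}$ exits $\mathcal{T}_i$. Using the monotonicity of backwards paths in $\tau$ stated after Proposition~\ref{prop_localization_backwards_path} (the path at a later end-time lies to the right of the path at an earlier end-time for the same label), localization at just $\tau = \pm\varkappa$ sandwiches the entire family, so the event $E$ that all four backwards paths lie in their respective tubes has $\Pb(E^c) \leq C e^{-c T^\delta} \to 0$. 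Since $y^{x_0}$ and $y^{x_1}$ share Poisson clocks site-by-site but $\mathcal{T}_0,\mathcal{T}_1$ are disjoint, Lemma~\ref{lemma_tagged_particle_position_only_depends_on_backwards_path} extended to time intervals as in Remark~\ref{remark_on_cor_3.3} implies that on $E$, conditioned on $(x_0,x_1)$, the random variables $\hat\Sigma_T^0,\hat\Sigma_T^1$ are functions of independent clock families and hence conditionally independent. Arguing as in Corollary~\ref{CorIndependence} conditionally on $(x_0,x_1)$ and then taking expectations yields
\begin{equation}
\Pb(\hat\Sigma_T^0 \leq s_0,\hat\Sigma_T^1 \leq s_1) = \E\bigl[\Pb(\hat\Sigma_T^0 \leq s_0\mid x_0)\Pb(\hat\Sigma_T^1 \leq s_1\mid x_1)\bigr] + o(1).
\end{equation}
Since $x_i$ fluctuates on scale $T^{\nu/3} = o(T^{1/3})$, these conditional one-point probabilities converge uniformly to their unconditional limits, and bounded convergence finishes the product factorisation.

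\textbf{Main obstacle.} The quantitatively delicate point is the interplay of scales in Step 2--3, forcing $\nu>2/3$: the tubes for $i=0,1$ emanate from the origin of the shifted step initial data at time $T^\nu$ and separate only linearly in $s$, so their gap near $s=T^\nu$ is only $\Theta(T^\nu)$, which must dominate the backwards-path fluctuation scale $T^{2/3+\delta}$ from Proposition~\ref{prop_localization_backwards_path}. If $\nu < 2/3$ the two tubes would overlap near time $T^\nu$ and the independence argument collapses, so the slow decorrelation has to push the relevant randomness far enough into the bulk of the evolution. The uniformity in $\tau$ of the localization, which naively requires simultaneous control of a continuum of backwards paths, is saved by their monotonicity in $\tau$ combined with the exponential tail supplied by Proposition~\ref{prop_localization_backwards_path}.
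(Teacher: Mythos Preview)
Your proof is correct and follows the same strategy as the paper: use slow decorrelation (Corollary~\ref{cor_functional_slow_decorrelation}) to push the relevant randomness past time $T^\nu$, then localize via Proposition~\ref{prop_localization_backwards_path} into disjoint space--time regions so that the two suprema depend on independent Poisson clocks. The paper's implementation differs in one detail: instead of your random tubes $\mathcal{T}_i$ followed by conditioning on $(x_0,x_1)$, it fixes \emph{deterministic} half-planes $\mathcal{D}_0^r,\mathcal{D}_1^l$ and absorbs the fluctuations of $x_i$ into a separate high-probability LLN event (Corollaries~\ref{corollary_asympt_indep_prior}--\ref{corollary_asympt_indep}); this sidesteps the conditioning and the bounded-convergence step, and the argument is closed instead with an $\eps$-sandwich (applying slow decorrelation once in each direction) together with the continuity of the limiting law from Lemma~\ref{lemma_continuity_sup_Airy_minus_fct}. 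Both routes are equivalent in substance. One small correction: Proposition~\ref{prop_localization_backwards_path} requires $K=\Or(T^\eps)$ with $\eps<\tfrac{1}{12}$, so you must take $\delta\in\bigl(0,\min(\tfrac{1}{12},\nu-\tfrac23)\bigr)$, not merely $\delta<\nu-\tfrac23$.
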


 Slow decorrelation tells us that the fluctuations happening at times smaller than $T^\nu$ are irrelevant for the asymptotic behaviour of $x_{\alpha T}(t)$, see also~\cite{BF22}. In particular, we can identify $(\tilde{X}_T^{i}(\tau))$ with $(\hat{X}^{\nu,i}_T(\tau))$ and show asymptotic independence of the particle positions \begin{equation}
 \left(x^{\textup{step},x_i}_{\alpha T - {\alpha \over \alpha_i} T^\nu}(T^\nu,t)\right)_{t \in [t_l^i,t_r^i]}, \ \ i \in \{0,1\} \label{eq_y_x_i_asympt_indep}
 \end{equation}
 instead, where $t_l^i = \alpha_i T - \tilde{c}_2^{i} \varkappa T^{2/3}, \ t_r^i = \alpha_i T + \tilde{c}_2^{i} \varkappa T^{2/3}$. Here, it is essential to choose $\nu \in (\tfrac{2}{3},1)$.

 The first step is to recall that for fixed $i \in \{0,1\}$,
 \begin{equation}
 (x^{\textup{step},x_i}(T^\nu,t)) \overset{(d)}{=} (x^{\text{step}}(t-T^\nu)+x_i),
 \end{equation}
 and to extend Proposition~\ref{prop_localization_backwards_path} from fixed times to the time intervals $[t_l^i, t_r^i]$. For this, we define regions\footnote{For notational simplicity, we do not restrict the time intervals as in \eqref{eq4.1} because we consider particles at different times here. However, it is still enough to construct the processes up to a finite time (like $T$), so we could replace $\infty$ by it in the definition of the regions.}
 \begin{equation}
 \begin{aligned}
  {\cal C}^r_i = & \Bigl \{ (x,t) \in  \Z \times [0,\infty) \ \Bigl| \ x \leq \left( 1-2 \sqrt{\tfrac{\alpha}{\alpha_i}} \right) t + K T^{2/3} \Bigr \}, \\
  {\cal C}^l_i = & \Bigl \{ (x,t) \in  \Z \times [0,\infty) \ \Bigl| \ x \geq \left( 1-2 \sqrt{\tfrac{\alpha}{\alpha_i}} \right) t - K T^{2/3} \Bigr \}
 \end{aligned}
 \end{equation}
 and TASEPs $\tilde{x}^{r,i}(t), \tilde{x}^{l,i}(t)$ analogously as in Proposition~\ref{prop_localization_backwards_path}: the processes $x^{\text{step}}(t), \tilde{x}^{r,i}(t), \tilde{x}^{l,i}(t)$ have the same (step) initial condition and share their jump attempts, except that $\tilde{x}^{r,i}(t)$ has density $0$ in $({\cal C}_i^r)^c$ and $\tilde{x}^{l,i}(t)$ has density $1$ in $({\cal C}_i^l)^c$. We set $N_i = \alpha T - \tfrac{\alpha}{\alpha_i} T^\nu$.

 \begin{prop} \label{prop_localization_backwards_paths_asympt_indep_part}
Fix $\nu \in (\tfrac{2}{3},1)$ and $ \eps \in (0, \tfrac{1}{12})$. Then, there exists an event $E_i$ such that
\begin{equation}
\Pb( (x^{\textup{step}}_{N_i}(t-T^\nu))_{t \in [t_l^i, t_r^i]} = (\tilde{x}^{r,i}_{N_i}(t-T^\nu))_{t \in [t_l^i, t_r^i]} = (\tilde{x}^{l,i}_{N_i}(t-T^\nu))_{t \in [t_l^i, t_r^i]} | E_i ) = 1
\end{equation}
and, for $T$ large enough,
\begin{equation}
\Pb(E_i) \geq 1 - C e^{-c K},
\end{equation}
where $ K_0 \leq K = \Or(T^\eps)$ for some constant $K_0 = K_0(\varkappa) > 0$. The constants $K_0,C,c > 0$ can be chosen uniformly for all times $T$ large enough.
\end{prop}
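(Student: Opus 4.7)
The plan is to extend the proof of Proposition~\ref{prop_localization_backwards_path} from a single terminal time to the family of terminal times $\{t - T^\nu : t \in [t_l^i, t_r^i]\}$, by exploiting the backwards-path monotonicity recalled just after Proposition~\ref{prop_localization_backwards_path}. For the fixed label $N_i$ and any $\tau$, the map $t \mapsto x^{\textup{step}}_{N_i(t\downarrow \tau)}(\tau)$ is non-decreasing in $t$. Consequently $\pi_{N_i,t_r^i - T^\nu}$ is the rightmost among the backwards paths under consideration, and uniform control of the right fluctuations over the whole interval reduces to a single-time problem handled by Proposition~\ref{prop_right_fluctuations_backwards_path}.

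First I would define $E_i^r$ as the event that $x^{\textup{step}}_{N_i(t_r^i - T^\nu \downarrow \tau)}(\tau) \leq (1 - 2\sqrt{\alpha/\alpha_i})\tau + K T^{2/3}$ for all $\tau \in [0, t_r^i - T^\nu]$. Applying Proposition~\ref{prop_right_fluctuations_backwards_path} with terminal time $t_r^i - T^\nu$ and particle label $N_i$ produces the same bound but with the natural slope $1 - 2\sqrt{N_i/(t_r^i - T^\nu)}$ in place of $1 - 2\sqrt{\alpha/\alpha_i}$. A short expansion shows the two slopes differ by $O(\varkappa T^{-1/3})$ (the $T^{\nu-1}$ contributions from numerator and denominator cancel to leading order); integrated over $\tau \lesssim T$ this yields a cumulative offset of order $\varkappa T^{2/3}$, which is absorbed into $K T^{2/3}$ provided $K \geq K_0(\varkappa)$ for some $K_0$ growing linearly in $\varkappa$. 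One then obtains $\Pb(E_i^r) \geq 1 - C e^{-c K}$, and this is the origin of the $\varkappa$-dependence of $K_0$.

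For the left-fluctuation event $E_i^l$ I would adapt the particle-hole duality from the second half of the proof of Proposition~\ref{prop_localization_backwards_path}. With $x^h$ denoting the hole process associated to $x^{\textup{step}}$, pick a single label $M_i := \lfloor (1 - \sqrt{\alpha/\alpha_i})^2 \alpha_i T - T^{2/3+\delta}\rfloor$ for a small fixed $\delta > \eps$. The duality inequality $x^h_{M_i}(s) < x^{\textup{step}}_{N_i}(s)$ has to hold uniformly for $s \in [t_l^i - T^\nu, t_r^i - T^\nu]$, but this is immediate from the two monotonicities $s \mapsto x^h_{M_i}(s)$ non-increasing and $s \mapsto x^{\textup{step}}_{N_i}(s)$ non-decreasing: the required uniform inequality reduces to the single time $s = t_l^i - T^\nu$, which has probability at least $1 - C e^{-c T^\delta}$ by Lemma~\ref{Lemma_A.2}. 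Controlling the backwards path of hole $M_i$ uniformly in $t$ reduces, by the analogous ordering of backwards paths for the mirrored hole process (the leftmost member now being at $t = t_r^i$), to the single-time Proposition~\ref{prop_right_fluctuations_backwards_path} applied to $x^h$, exactly as in~\eqref{ineq_appl_right_fluctuations_backwards_paths_for_holes}.

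Setting $E_i := E_i^r \cap E_i^l$ and applying Lemma~\ref{lemma_tagged_particle_position_only_depends_on_backwards_path} pointwise in $t \in [t_l^i, t_r^i]$ then yields the chain of equalities claimed in the proposition. The main obstacle is structural rather than probabilistic: because Proposition~\ref{prop_right_fluctuations_backwards_path} is one-directional, the left-fluctuation half must pass through particle-hole duality, and the genuinely new input compared to Proposition~\ref{prop_localization_backwards_path} is the monotonicity argument that upgrades the single-time duality inequality to a uniform one over the $T^{2/3}$-scale time window. Everything else is a mechanical repetition of the single-time arguments with the terminal time pushed out to $t_r^i - T^\nu$ and $K$ lower-bounded by $K_0(\varkappa)$.
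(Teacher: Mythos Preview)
Your approach is the same as the paper's: reduce the right-fluctuation control to the single terminal time $t_r^i - T^\nu$ via backwards-path monotonicity, absorb the slope mismatch of order $\varkappa T^{-1/3}$ into $K$ (this is exactly where the paper's $K_0(\varkappa)$ comes from), and handle the left side by particle--hole duality with the uniform ordering $x^h_{M}(s)<x^{\textup{step}}_{N_i}(s)$ reduced to the earliest time $t_l^i-T^\nu$.

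There is one quantitative error. Your hole label $M_i=(1-\sqrt{\alpha/\alpha_i})^2\alpha_i T-T^{2/3+\delta}$ with $\delta>\eps$ is shifted too far. Writing $\beta=M_i/(t_r^i-T^\nu)$, one has $(1-2\sqrt{\alpha/\alpha_i})-(-(1-2\sqrt{\beta}))\asymp T^{-1/3+\delta}$, so integrating over $\tau\lesssim T$ gives a cumulative offset of order $T^{2/3+\delta}$ between the natural slope of the hole backwards path and the boundary of $\mathcal{C}_i^l$. Since $K=\mathcal{O}(T^\eps)$ with $\eps<\delta$, this offset cannot be absorbed into $KT^{2/3}$, and the conclusion $\Pb(E_i^l)\geq 1-Ce^{-cK}$ fails as written. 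The fix is simply to take a smaller shift: the paper uses $M_l=(1-\sqrt{N_i/(t_l^i-T^\nu)})^2(t_l^i-T^\nu)-T^{1/2}$, which makes the slope offset $\mathcal{O}(T^{-1/2})$ and hence the integrated discrepancy $\mathcal{O}(T^{1/2})\ll KT^{2/3}$, while still giving the ordering event probability $\geq 1-Ce^{-cT^{1/6}}$. Any shift that is $\gg T^{1/3}$ and $\ll KT^{2/3}$ works; your choice lies just outside this window.
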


\begin{proof}
We write $N = N_i, t_l = t_l^i, t_r = t_r^i$ and define the events
\begin{align}
E_r = \Bigl \{ \forall s \in [0,t-T^\nu], t \in [t_l,t_r]: \ x^{\text{step}}_{N(t - T^\nu \downarrow s)}(s) \leq \Bigl(1 - 2 \sqrt{ \tfrac{\alpha}{\alpha_i}} \Bigr) s + K T^{2/3} \Bigr \}
\end{align}
and
\begin{equation} \begin{aligned}
E_l = & \Bigl \{ \forall s \in [0,t-T^\nu], t \in [t_l,t_r]: \ x^{\text{step},h}_{M_l(t-T^\nu \downarrow s)}(s) \geq \Bigl ( 1 - 2 \sqrt{\tfrac{\alpha}{\alpha_i}} \Bigr ) s - K T^{2/3} \Bigr \} \\ & \cap  \{ \forall t \in [t_l,t_r]: \  x^{\text{step}}_N(t - T^\nu) > x^{\text{step},h}_{M_l}(t - T^\nu) \},
\end{aligned} \end{equation}
with $M_l \in \N$ chosen below and $x^{\text{step},h}(t)$ denoting the process formed by the holes in $x^{\text{step}}(t)$. Almost surely, by Lemma~\ref{lemma_tagged_particle_position_only_depends_on_backwards_path} and Remark~\ref{remark_on_lemma_3.1}, $E_i = E_r \cap E_l$ implies $x_N^{\text{step}}(t-T^\nu) = \tilde{x}^{r,i}_{N}(t-T^\nu) = \tilde{x}^{l,i}_{N}(t-T^\nu)$ for all $t \in [t_l,t_r]$. This follows by the same arguments as in the proof of Proposition~\ref{prop_localization_backwards_path}.

It remains to show $\Pb(E_i^c) \leq C e^{-c K}$. Series expansion and the fact that $x^{\text{step}}_{N(t_1 \downarrow s)}(s) \leq x^{\text{step}}_{N(t_2 \downarrow s)}(s)$ whenever $t_1 \leq t_2$ yield
\begin{equation}
\begin{aligned}
 \Pb(E_r^c)
\leq \ &  \Pb\Bigl(\exists s \in [0,t_r - T^\nu]: \ x^{\text{step}}_{N(t_r - T^\nu \downarrow s)}(s) \geq \Bigl(1 - 2 \sqrt{ \tfrac{\alpha}{\alpha_i}} \Bigr) s + K T^{2/3} \Bigr ) \\
\leq \ &  \Pb\Bigl(\exists s \in [0,t_r - T^\nu]: \ x^{\text{step}}_{N(t_r - T^\nu \downarrow s)}(s) \geq \Bigl(1 - 2 \sqrt{ \tfrac{N}{t_r - T^\nu}} \Bigr) s + (K - \tilde{c} \varkappa) T^{2/3} \Bigr ) \label{eq.6.7}
\end{aligned} \end{equation}
for some constant $\tilde{c} > 0$ and $T$ large enough. By Proposition~\ref{prop_right_fluctuations_backwards_path}, the last probability can be bounded by $Ce^{-cK}$ for $K$ large enough.

In $E_l$, we set
\begin{equation}
M_l = \Bigl ( 1 - \sqrt{\tfrac{N}{t_l - T^\nu}}\Bigr)^2(t_l - T^\nu) - T^{1/2}.
\end{equation}
Then, $x^{\text{step}}_N(t_l - T^\nu) > x^{\text{step},h}_{M_l}(t_l - T^\nu)$ is equivalent to $x^{\text{step}}_N(t - T^\nu) > x^{\text{step},h}_{M_l}(t - T^\nu)$ for all $t \in [t_l,t_r]$ since once a particle jumped over a hole, their paths will not cross again. As in the proof of Proposition~\ref{prop_localization_backwards_path} (see \eqref{eq4.79} and use that $t_l-T^\nu$ is a macroscopic time), Lemma~\ref{Lemma_A.2} yields
\begin{equation}
 \Pb ( x^{\text{step}}_N(t_l - T^\nu) < x^{\text{step},h}_{M_l}(t_l - T^\nu) ) \leq C e^{-c T^{1/6}}.
\end{equation}
By the same strategy as for \eqref{eq.6.7}, we deduce
\begin{equation}
\begin{aligned}
& \Pb \Bigl( \exists s \in [0,t-T^\nu], t \in [t_l,t_r]: \ x^{\text{step},h}_{M_l(t-T^\nu \downarrow s)}(s) \leq \Bigl ( 1 - 2 \sqrt{\tfrac{\alpha}{\alpha_i}} \Bigr ) s - K T^{2/3} \Bigr ) \\
\leq \ &  \Pb \Bigl( \exists s \in [0,t_r-T^\nu]: \ x^{\text{step},h}_{M_l(t_r-T^\nu \downarrow s)}(s) \leq \Bigl ( 1 - 2 \sqrt{\tfrac{\alpha}{\alpha_i}} \Bigr ) s - K T^{2/3} \Bigr ) \\
\leq \ & \Pb \Bigl( \exists s \in [0,t_r-T^\nu]: \ x^{\text{step},h}_{M_l(t_r-T^\nu \downarrow s)}(s) + \Bigl ( 1-2 \sqrt{ \tfrac{M_l}{t_r - T^\nu}} \Bigr ) s \leq - (K-\tilde{c} \varkappa) T^{2/3} \Bigr )
\end{aligned}
\end{equation}
for some constant $\tilde{c} > 0$ and $T$ large enough. As stated in the proof of Proposition~\ref{prop_localization_backwards_path} (see \eqref{ineq_appl_right_fluctuations_backwards_paths_for_holes}), the last probability can be bounded by $C e^{- c K}$ by Proposition~\ref{prop_right_fluctuations_backwards_path}, for $K$ large enough. Thus, $\Pb (E_l^c) \leq C e^{-c K}$.
\end{proof}

 Clearly, the regions ${\cal C}_0^r$ and ${\cal C}_1^l$ are not disjoint. Still, considering \eqref{eq_y_x_i_asympt_indep}, we need to take the shift of the initial condition by $x_i$ into account.

 For a better understanding of the overall picture, we illustrate in Figure~\ref{figure_slow_decorrelation_asymptotic_independence} how we would argue if we localized the backwards paths in TASEP with step initial condition completely.
 \begin{figure}[t!]
	\centering
	\includegraphics[scale=0.9]{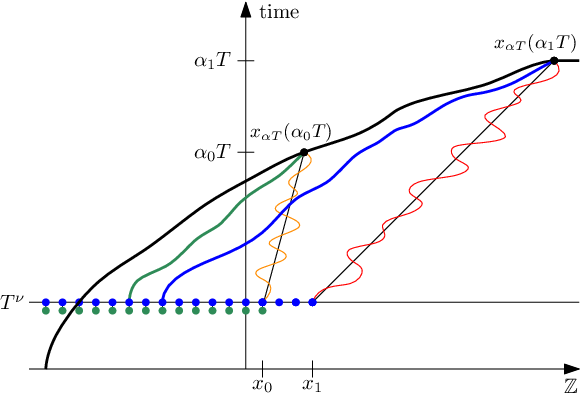}
	\caption{
		This image shows \eqref{eq_slow_decorr_x_equals_xi_plus_y} for $t = \alpha_0 T$ and $t = \alpha_1 T$, under the assumption that we localized the backwards paths starting at $x^{\text{step},x_i}_{N_i}(T^\nu,t)$ completely. The evolution of $x_{\alpha T}(t)$ corresponds to the black line and those of $x^{\text{step},x_i}_{N_i}(T^\nu,t)$ correspond to the green respectively blue lines. Their backwards paths starting at times $\alpha_i T$, depicted by the orange respectively red lines, are contained within $\Or(T^{2/3})-$neighbourhoods of $\Bigl(1 - 2 \sqrt{ \tfrac{\alpha}{\alpha_i}}\Bigr)t $ and do not meet with probability converging to $1$ because the distance of $x_0$ and $x_1$ is of order $T^\nu \gg T^{2/3}$.}
	\label{figure_slow_decorrelation_asymptotic_independence}
\end{figure}
Though in Section~\ref{sectLocal} and Proposition~\ref{prop_localization_backwards_paths_asympt_indep_part}, we rather delimited the regions that tagged particle positions depend upon, this is the implicit idea behind our argumentation. Corollary~\ref{cor_functional_slow_decorrelation} implies
 \begin{equation}
x_{\alpha T}(t) \simeq x^{\text{step},x_i}_{N_i}(T^\nu,t) + o(T^{1/3}) \ \text{ for all } t \in [t_l^i, t_r^i], i \in \{0,1\} \label{eq_slow_decorr_x_equals_xi_plus_y}
\end{equation}
and the three processes are coupled by basic coupling. In particular, asymptotic independence of $(x_{\alpha T}(t), t \in [t_l^{i},t_r^{i}])$ for $i \in \{0,1\}$, subject to $T^{1/3}-$scaling, could be obtained from Lemma~\ref{lemma_tagged_particle_position_only_depends_on_backwards_path} by localizing the backwards paths starting at the right hand side of \eqref{eq_slow_decorr_x_equals_xi_plus_y} in disjoint regions with probability converging to $1$. If Proposition~\ref{prop_localization_backwards_paths_asympt_indep_part} localized backwards paths directly, this could be done by shifting ${\cal C}_i^l \cap {\cal C}_i^r$ and estimating $x_i$. Choosing $K T^{2/3} \ll T^\nu$, the resulting regions would be disjoint indeed.

 We return to rigorous arguments and derive asymptotic independence of \eqref{eq_y_x_i_asympt_indep} from Proposition~\ref{prop_localization_backwards_paths_asympt_indep_part}. For this, we define
 \begin{equation}
 \begin{aligned}
 {\cal D}_i^r = & \Bigl \{ (x,t) \in \Z \times [T^\nu,\infty) \ \Bigl | \ x \leq \Bigl (1 - 2 \sqrt{ \tfrac{\alpha}{\alpha_i}} \Bigr) t + 2KT^{2/3} \Bigr \}, \\
 {\cal D}_i^l = & \Bigl \{ (x,t) \in \Z \times [T^\nu,\infty) \ \Bigl | \ x \geq \Bigl (1 - 2 \sqrt{ \tfrac{\alpha}{\alpha_i}} \Bigr) t - 2KT^{2/3} \Bigr \}.
  \end{aligned}
 \end{equation}

\begin{cor} \label{corollary_asympt_indep_prior}
Fix $\nu \in (\tfrac{2}{3},1)$ and $ \eps \in (0, \tfrac{1}{12}) \cap (0, \nu - \tfrac{2}{3})$. Then, there exists an event $E_i$ such that
\begin{equation}
\Pb ((x^{\textup{step},x_i}_{N_i}(T^\nu,t))_{t \in [t_l^i,t_r^i]} = (\tilde{x}^{r,i}_{N_i}(T^\nu,t))_{t \in [t_l^i,t_r^i]} = (\tilde{x}^{l,i}_{N_i}(T^\nu,t))_{t \in [t_l^i,t_r^i]} \ | E_i ) = 1
\end{equation}
and, for $T$ large enough,
\begin{equation}
\Pb (E_i) \geq 1 - Ce^{-cK},
\end{equation}
where $K_0 \leq K = \Or(T^\eps)$ for some constant $K_0 = K_0(\varkappa) > 0$. The constants $K_0,C,c > 0$ can be chosen uniformly for all times $T$ large enough. The process $\tilde{x}^{r,i}(T^\nu,t)$ has density $0$ in $({\cal D}_i^r)^c$ and the process $\tilde{x}^{l,i}(T^\nu,t)$ has density $1$ in $({\cal D}_i^l)^c$.
\end{cor}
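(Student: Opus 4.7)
My plan is to deduce the corollary from Proposition~\ref{prop_localization_backwards_paths_asympt_indep_part} by exploiting the fact that, conditionally on the evolution up to time $T^\nu$, the process $x^{\textup{step},x_i}(T^\nu,\cdot)$ is a spatially shifted copy of a fresh step-TASEP, the shift being exactly $x_i$. The doubled margin $2KT^{2/3}$ in the definitions of $\mathcal{D}_i^{r/l}$ (versus $KT^{2/3}$ in $\mathcal{C}_i^{r/l}$) is designed precisely to absorb the random fluctuation of $x_i$ around its law-of-large-numbers value $(1-2\sqrt{\alpha/\alpha_i})T^\nu$.

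Concretely, I first condition on the $\sigma$-algebra $\mathcal{F}_{T^\nu}$ generated by the Poisson events in $[0,T^\nu]$. Then $x_i = x_{(\alpha/\alpha_i) T^\nu}(T^\nu)$ is $\mathcal{F}_{T^\nu}$-measurable, and the translated process $\hat{x}(s) \coloneqq x^{\textup{step},x_i}(T^\nu, s+T^\nu) - x_i$ is, conditionally on $\mathcal{F}_{T^\nu}$, an ordinary TASEP with step initial condition independent of $\mathcal{F}_{T^\nu}$. Applying Proposition~\ref{prop_localization_backwards_paths_asympt_indep_part} verbatim to $\hat{x}$ in place of $x^{\textup{step}}$ produces an event $E_i^{\textup{loc}}$ satisfying $\Pb(E_i^{\textup{loc}} \mid \mathcal{F}_{T^\nu}) \geq 1 - C e^{-cK}$ on which, for every $t \in [t_l^i, t_r^i]$, the backwards path of $\hat{x}_{N_i}(t - T^\nu)$ remains within $\mathcal{C}_i^r \cap \mathcal{C}_i^l$.

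Next I introduce the shift event
\begin{equation}
F \coloneqq \Bigl\{ |x_i - (1-2\sqrt{\alpha/\alpha_i})\, T^\nu| \leq K T^{2/3} \Bigr\}.
\end{equation}
Lemma~\ref{Lemma_A.2}, applied with $T \mapsto T^\nu$ and $\alpha \mapsto \alpha/\alpha_i$, yields $\Pb(F^c) \leq C \exp(-c K^{3/2} T^{1-\nu/2})$, which is dominated by $C e^{-cK}$ for $T$ large enough, since $\nu < 1$ makes $1-\nu/2 > 0$. Setting $E_i \coloneqq F \cap E_i^{\textup{loc}}$, a direct coordinate translation shows that on $F$ the image of $\mathcal{C}_i^r$ under the shift by $(x_i, T^\nu)$ is contained in $\mathcal{D}_i^r$, and similarly $\mathcal{C}_i^l$ is carried into $\mathcal{D}_i^l$: the extra width $KT^{2/3}$ absorbs precisely the quantity $|x_i - (1-2\sqrt{\alpha/\alpha_i})T^\nu|$. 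Consequently, on $E_i$ the backwards paths of $x^{\textup{step},x_i}_{N_i}(T^\nu,\cdot)$ from endpoint times $t \in [t_l^i, t_r^i]$ lie in $\mathcal{D}_i^r \cap \mathcal{D}_i^l$, and Lemma~\ref{lemma_tagged_particle_position_only_depends_on_backwards_path} (together with the particle-hole duality step already used for the left regions in the proof of Proposition~\ref{prop_localization_backwards_paths_asympt_indep_part}) yields the desired pathwise equality with $\tilde{x}^{r,i}(T^\nu,\cdot)$ and $\tilde{x}^{l,i}(T^\nu,\cdot)$. Integrating the conditional bound over $\mathcal{F}_{T^\nu}$ and combining with the bound on $F^c$ gives $\Pb(E_i^c) \leq C e^{-cK}$.

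The main pitfall is conceptual rather than computational: the spatial shift by $x_i$ does not preserve the basic coupling (different sites carry different Poisson clocks), so one must invoke the identification in law between $\hat{x}$ and a fresh step TASEP, which is available only after conditioning on $\mathcal{F}_{T^\nu}$. The constant $K_0 = K_0(\varkappa)$ is then fixed so that $C e^{-cK}$ dominates $C\exp(-cK^{3/2}T^{1-\nu/2})$ uniformly for $T$ above some threshold, and so that the inclusions of the shifted $\mathcal{C}_i^{r/l}$ in $\mathcal{D}_i^{r/l}$ hold for all relevant $t \in [t_l^i, t_r^i]$ (which accounts for the dependence of $K_0$ on $\varkappa$ via the endpoint corrections $\tilde{c}_2^i \varkappa T^{2/3}$ inherited from Proposition~\ref{prop_localization_backwards_paths_asympt_indep_part}).
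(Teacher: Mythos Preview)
Your proposal is correct and follows essentially the same route as the paper: reduce to Proposition~\ref{prop_localization_backwards_paths_asympt_indep_part} via the observation that $x^{\textup{step},x_i}(T^\nu,\cdot)-x_i$ is (in law, or conditionally on $\mathcal{F}_{T^\nu}$) a fresh step TASEP, then intersect with the shift event $F=B_i=\{|x_i-(1-2\sqrt{\alpha/\alpha_i})T^\nu|\le KT^{2/3}\}$ to turn the shifted $\mathcal{C}_i^{r/l}$ regions into $\mathcal{D}_i^{r/l}$. One minor slip: your bound $\Pb(F^c)\le C\exp(-cK^{3/2}T^{1-\nu/2})$ captures only the upper tail from Lemma~\ref{Lemma_A.2}; the dominant contribution is the lower tail, which yields $C\exp(-cKT^{(2-\nu)/3})$ as in the paper --- still absorbed into $Ce^{-cK}$, so the conclusion is unaffected.
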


\begin{proof}
Given the Poisson events from the construction of $x(t)$ (and $x^{\text{step},x_i}(T^\nu,t)$), we define the process $\tilde{x}^{r,i}(T^\nu,t)$ by starting the process $\tilde{x}^{r,i}(t)$ at time $T^\nu$ from the step initial condition $x^{\text{step},x_i}(T^\nu,T^\nu)$. Then, we have
\begin{equation}
(x^{\text{step},x_i}(T^\nu,t) , \tilde{x}^{r,i}(T^\nu,t)) \overset{(d)}{=} (x^{\text{step}}(t-T^\nu)+x_i, \tilde{x}^{r,i}(t-T^\nu) + x_i)
\end{equation}
jointly, and $\tilde{x}^{r,i}(T^\nu,t)$ has density $0$ in the region
\begin{equation}
({\cal F}_i^r)^c =  \Bigl \{ (x+x_i,t)  \in \Z \times [T^\nu,\infty) \ \Bigl| \  x > \Bigl ( 1 - 2 \sqrt{ \tfrac{\alpha}{\alpha_i}} \Bigr ) (t-T^\nu) + K T^{2/3} \Bigr \}.
\end{equation}
The existence of an event with the desired properties follows from Proposition~\ref{prop_localization_backwards_paths_asympt_indep_part}. We denote the event $E_i$ from Proposition~\ref{prop_localization_backwards_paths_asympt_indep_part} by $\tilde{E}_i$ here. For
\begin{equation}
B_i = \Bigl \{ \Bigl|x_i - \Bigl ( 1 - 2 \sqrt{ \tfrac{\alpha}{\alpha_i}} \Bigr ) T^\nu \Bigr | \leq K T^{2/3} \Bigr \}
\end{equation}
(with $x_i = x_{{\alpha \over \alpha_i} T^\nu}(T^\nu)$), Lemma~\ref{Lemma_A.2} yields
\begin{equation}
\Pb(B_i) \geq 1 - C e^{-c K T^{(2-\nu)/3}}.
\end{equation}
Thus, we define $E_i=\tilde{E}_i \cap B_i$ and, given this event, obtain
\begin{equation}
({\cal D}_i^r)^c \subseteq({\cal F}_i^r)^c.
\end{equation}
For $\tilde{x}^{l,i}(T^\nu,t)$, the arguments are analogous.
\end{proof}

Since the processes $\tilde{x}^{r,i}(T^\nu,t)$, $\tilde{x}^{l,i}(T^\nu,t)$ are coupled by basic coupling for \mbox{$i \in \{0,1\}$}, Corollary~\ref{corollary_asympt_indep_prior} yields the asymptotic independence we were looking for.

\begin{cor} \label{corollary_asympt_indep}
In the setting of Corollary~\ref{corollary_asympt_indep_prior}, it holds ${\cal D}_0^r \cap {\cal D}_1^l = \varnothing$ for all $T$ large enough. In particular, for $T$ large enough and \emph{given the event $E_0 \cap E_1$}, the processes $\tilde{x}^{r,0}(T^\nu,t)$ and $\tilde{x}^{l,1}(T^\nu,t)$ are independent, and thus, $(x^{\textup{step},x_i}_{N_i}(T^\nu,t))_{t \in [t_l^i,t_r^i]}$, $i \in \{0,1\}$ are independent as well.
\end{cor}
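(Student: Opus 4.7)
My plan is to verify the two assertions of the corollary in turn, relying on what has already been established about $\tilde{x}^{r,0}$ and $\tilde{x}^{l,1}$ in Corollary~\ref{corollary_asympt_indep_prior}.

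For the disjointness ${\cal D}_0^r \cap {\cal D}_1^l = \varnothing$, I would argue directly from the defining inequalities. A point $(x,t)$ lying in both regions must satisfy $t \geq T^\nu$ together with
\begin{equation*}
\Bigl(1-2\sqrt{\tfrac{\alpha}{\alpha_1}}\Bigr)t - 2KT^{2/3} \leq x \leq \Bigl(1-2\sqrt{\tfrac{\alpha}{\alpha_0}}\Bigr)t + 2KT^{2/3},
\end{equation*}
which rearranges to $2\bigl(\sqrt{\alpha/\alpha_0}-\sqrt{\alpha/\alpha_1}\bigr)\,t \leq 4KT^{2/3}$. Since $\alpha<\alpha_0<\alpha_1<1$, the coefficient on the left is a strictly positive constant depending only on $\alpha,\alpha_0,\alpha_1$, so $t=\Or(KT^{2/3})=\Or(T^{2/3+\eps})$. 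The hypothesis $\eps<\nu-2/3$ makes this $o(T^\nu)$, contradicting $t \geq T^\nu$ for all $T$ sufficiently large.

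For the independence, the key observation is that by its construction in the proof of Corollary~\ref{corollary_asympt_indep_prior}, $\tilde{x}^{r,0}(T^\nu,t)$ is a deterministic functional of $x_0$ together with the Poisson clocks lying in the shifted region ${\cal F}_0^r$ for times in $(T^\nu,\infty)$; analogously, $\tilde{x}^{l,1}(T^\nu,t)$ depends only on $x_1$ and on the clocks in ${\cal F}_1^l$ for times $>T^\nu$. On the event $B_0\cap B_1$ forced by $E_0\cap E_1$ (see the proof of Corollary~\ref{corollary_asympt_indep_prior}), we have ${\cal F}_0^r\subseteq{\cal D}_0^r$ and ${\cal F}_1^l\subseteq{\cal D}_1^l$, so by the first step the two collections of Poisson clocks in use live in disjoint subsets of $\Z\times(T^\nu,\infty)$. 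They are therefore mutually independent and are further independent of the $\sigma$-algebra $\mathcal{G}$ generated by the clocks up to time $T^\nu$, which already determines the pair $(x_0,x_1)$. Conditioning on $\mathcal{G}$ reduces each tilde-process to a deterministic functional of its own Poisson family, which gives the desired independence of $\tilde{x}^{r,0}(T^\nu,t)$ and $\tilde{x}^{l,1}(T^\nu,t)$ on $E_0\cap E_1$.

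The second half of the conclusion is then immediate from Corollary~\ref{corollary_asympt_indep_prior}: on $E_0\cap E_1$ the identifications $x^{\textup{step},x_i}_{N_i}(T^\nu,t)=\tilde{x}^{r,i}_{N_i}(T^\nu,t)=\tilde{x}^{l,i}_{N_i}(T^\nu,t)$ hold for all $t\in[t_l^i,t_r^i]$, so the step-process restrictions coincide there with $\tilde{x}^{r,0}_{N_0}$ and $\tilde{x}^{l,1}_{N_1}$ and inherit their independence. The principal obstacle is organisational rather than technical: one has to carefully distinguish the random, $x_i$-shifted construction regions ${\cal F}_i$ from the deterministic regions ${\cal D}_i$, and make sure that the high-probability control on $x_i$ supplied by $B_i$ has already been absorbed into the event on which one performs the argument—after which the independence is simply the standard fact that Poisson processes on disjoint ground sets are independent.
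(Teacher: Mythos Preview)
Your proof is correct and follows essentially the same route as the paper: disjointness via the defining inequalities together with $KT^{2/3}\ll T^\nu$, and then independence from the fact that, on $E_0\cap E_1$, the processes $\tilde{x}^{r,0}(T^\nu,\cdot)$ and $\tilde{x}^{l,1}(T^\nu,\cdot)$ draw on Poisson clocks in the disjoint deterministic regions ${\cal D}_0^r$ and ${\cal D}_1^l$. Your explicit conditioning on the $\sigma$-algebra $\mathcal{G}$ of clocks up to time $T^\nu$---to account for the shared dependence of both tilde-processes on the random shifts $x_0,x_1$---is a welcome refinement that the paper's own proof passes over in silence.
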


\begin{proof}
The definitions of ${\cal D}_0^r$ and ${\cal D}_1^l$ and the choice $K T^{2/3} \ll T^\nu$ immediately imply ${\cal D}_0^r \cap {\cal D}_1^l = \varnothing$ for $T$ large enough. Since given $E = E_0 \cap E_1$, the processes $\tilde{x}^{r,0}(T^\nu,t)$ resp. $\tilde{x}^{l,1}(T^\nu,t)$ only depend on the randomness in ${\cal D}_0^r$ resp. ${\cal D}_1^l$, Corollary~\ref{corollary_asympt_indep_prior} yields, for any measurable subsets $A_i$ of $\mathbb{D}([t_l^i,t_r^i])$, $i \in \{0,1\}$,
\begin{equation}
\begin{aligned}
& \Pb ( (x^{\textup{step},x_i}_{N_i}(T^\nu,t))_{t \in [t_l^i,t_r^i]} \in A_i, i \in \{0,1\} |  E) \\
= \ & \Pb ( (\tilde{x}^{r,0}_{N_0}(T^\nu,t))_{t \in [t_l^0,t_r^0]} \in A_0, (\tilde{x}^{l,1}_{N_1}(T^\nu,t))_{t \in [t_l^1,t_r^1]} \in A_1 |  E) \\
= \ & \Pb ( (x^{\textup{step},x_0}_{N_0}(T^\nu,t))_{t \in [t_l^0,t_r^0]} \in A_0  |  E) \ \Pb ( (x^{\textup{step},x_1}_{N_1}(T^\nu,t))_{t \in [t_l^1,t_r^1]} \in A_1  |  E).
\end{aligned}
\end{equation}
\end{proof}

\begin{remark}
Choosing $K = K_T \to \infty$ as $T \to \infty$ in Corollary~\ref{corollary_asympt_indep}, the particle positions \eqref{eq_y_x_i_asympt_indep} are independent with probability converging to $1$.

If we only imposed $\alpha_1 T \geq \alpha_0 T + T^{2/3 + \sigma}$ as mentioned in Remark~\ref{remark_weaken_assumption}, then we would need $1 - \sigma + \eps < \nu$ in order to obtain $\mathcal{D}^r_0 \cap \mathcal{D}^l_1 = \varnothing$.
\end{remark}

The proofs below require the following result.

\begin{lem} \label{lemma_continuity_sup_Airy_minus_fct} Let $\mathcal{A}_2$ denote an Airy$_2$ process and let $I \subseteq \R$ be a bounded interval. If $g$ is a c\`adl\`ag and piecewise continuous function satisfying $g(\tau) \geq - M + \tfrac{\tau^2}{2}, \tau \in \R,$ for some constant $M \in \R$, then the random variable
	\begin{equation}
	\sup_{\tau \in I} \{\mathcal{A}_2(\tau) - g(\tau)\}
	\end{equation} is continuous.
\end{lem}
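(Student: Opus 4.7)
The plan is to reduce the statement to the case of $g$ continuous on a compact interval and then invoke known regularity results for the Airy$_2$ process. Concretely, I would establish continuity in two stages.

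\textbf{Step 1: reduction to the continuous case.} Since $I$ is bounded and $g$ is piecewise continuous and c\`adl\`ag, I decompose $I$ into finitely many disjoint subintervals $I_1,\ldots,I_k$ on which $g$ is continuous, each closed at its left endpoint (the same decomposition used after Remark~\ref{remark_weaken_assumption}). At the right endpoint $b_j$ of each $I_j$, the c\`adl\`ag property guarantees that the left limit $g(b_j^-)\in\R$ exists, so $g|_{I_j}$ extends to a continuous function $\tilde g_j$ on $\bar I_j$. Because $\mathcal A_2$ has continuous sample paths and $\tilde g_j$ differs from $g$ on at most one endpoint of $\bar I_j$, I have, almost surely,
\begin{equation*}
\sup_{\tau\in I_j}\{\mathcal A_2(\tau)-g(\tau)\}
=\sup_{\tau\in\bar I_j}\{\mathcal A_2(\tau)-\tilde g_j(\tau)\}=:M_j,
\end{equation*}
and consequently $\sup_{\tau\in I}\{\mathcal A_2(\tau)-g(\tau)\}=\max_{1\le j\le k}M_j$ almost surely. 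Since $\Pb(\max_j M_j=s)\le\sum_{j=1}^{k}\Pb(M_j=s)$, it is enough to prove that each $M_j$ is a continuous random variable.

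\textbf{Step 2: continuity of $M_j$.} After relabelling, I need to show that for any continuous $h$ on a compact interval $K=[a,b]$, the random variable $M=\sup_{\tau\in K}\{\mathcal A_2(\tau)-h(\tau)\}$ has no atoms. Shifting $h\mapsto h+s$ this is equivalent to continuity of $s\mapsto\Pb(\mathcal A_2(\tau)\le h(\tau)+s\text{ for all }\tau\in K)$. I would deduce this from the known Fredholm-determinant representation of the multi-point distribution of the Airy$_2$ process (Johansson~\cite{Jo03b}, see also the formulas and continuity statements used in~\cite{CLW16,QR16}), which depends continuously on the barrier $h+s$ in the supremum norm; alternatively one can use the Brownian Gibbs property of the Airy line ensemble to write $\mathcal A_2|_K$ as a Brownian bridge conditioned to stay above the second line, and then apply the classical absolute continuity of suprema of Brownian bridges minus continuous functions.

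\textbf{Main obstacle.} Step 1 is routine bookkeeping enabled by the c\`adl\`ag and piecewise continuity hypotheses (the parabolic lower bound $g(\tau)\ge-M+\tau^2/2$ is not needed here since $I$ is bounded). The real content lies in Step 2: verifying that the distribution function of $\sup_{K}(\mathcal A_2-h)$ is strictly increasing, equivalently that the supremum is a continuous random variable. This is standard but non-trivial; my plan is to import it from the Airy$_2$ literature rather than re-derive it, since the result is used in exactly the same way in~\cite{CLW16,CH11} that the excerpt already cites.
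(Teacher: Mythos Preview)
Your proposal is correct and follows essentially the same route as the paper: both decompose $I$ into finitely many pieces on which $g$ is continuous, import the continuous case from~\cite{CLW16}, and then combine. The only cosmetic difference is that the paper phrases the combination as an induction on the number of pieces via the inequality
\[
\Pb\Bigl(\sup_I\{\cdot\}\le s+\delta\Bigr)-\Pb\Bigl(\sup_I\{\cdot\}\le s\Bigr)\le\sum_j\Bigl[\Pb\Bigl(\sup_{I_j}\{\cdot\}\le s+\delta\Bigr)-\Pb\Bigl(\sup_{I_j}\{\cdot\}\le s\Bigr)\Bigr],
\]
whereas you use the equivalent union bound $\Pb(\max_j M_j=s)\le\sum_j\Pb(M_j=s)$; your explicit continuous extension $\tilde g_j$ to $\bar I_j$ is a clean way to handle the half-open subintervals that the paper leaves implicit.
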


\begin{proof}
	For $g$ being continuous, this result has been proven in~\cite{CLW16}, refer to the equations (103), (104) and Remark 2. There, one can pass from compact intervals $I$ to compactly contained ones since $g$ is continuous and the Airy$_2$ process has continuous sample paths. We want to point out that $\sup_{\tau \in \overline{I}} \{\mathcal{A}_2(\tau) - g(\tau)\}$ is finite almost surely as the same already holds for $\sup_{\tau \in \overline{I}} \{\mathcal{A}_2(\tau)\}$: for each $M > 0$ and $n \in \N$, we have
	\begin{equation}
	\Pb\Bigl ( \sup_{\tau \in [-n,n]}\{\mathcal{A}_2(\tau)\} \leq M \Bigr) \geq \Pb\Bigl ( \sup_{\tau \in \R} \{\mathcal{A}_2(\tau) - \tau^2\} \leq M - n^2 \Bigr) = F_{1}(2^{2/3}(M-n^2)),
	\end{equation} and the latter converges to $1$ as $M \to \infty$.
	
	For $g$ being piecewise continuous, we decompose $I$ into finitely many intervals $I_j, j \in \{1, \dots, n\}$, on which $g$ is continuous. Then, the result is obtained by induction over $n$. For the induction step, observe that for example,
	\begin{equation}
	\begin{aligned}
	& \Pb\Bigl( \sup_{\tau \in I} \{\mathcal{A}_2(\tau) - g(\tau)\} \leq s + \delta \Bigr ) - \Pb \Bigl ( \sup_{\tau \in I} \{\mathcal{A}_2(\tau) - g(\tau)\} \leq s \Bigr ) \\
	\leq & \Pb\Bigl( \sup_{j \in \{1, \dots, n -1\}} \sup_{\tau \in I_j}\{\mathcal{A}_2(\tau) - g(\tau)\}\leq s + \delta \Bigr ) - \Pb \Bigl( \sup_{j \in \{1, \dots, n-1\}} \sup_{\tau \in I_j} \{\mathcal{A}_2(\tau) - g(\tau)\} \leq s \Bigr) \\
	\ & + \Pb \Bigl( \sup_{\tau \in I_{n}} \{\mathcal{A}_2(\tau) - g(\tau)\}\leq s + \delta \Bigr ) - \Pb \Bigl( \sup_{\tau \in I_{n}} \{\mathcal{A}_2(\tau) - g(\tau)\} \leq s \Bigr).
	\end{aligned}
	\end{equation}
	The first difference converges to zero by the induction hypothesis, and the second one by the result for continuous functions.
	
\end{proof}

By Corollary~\ref{cor_functional_slow_decorrelation} and Corollary~\ref{corollary_asympt_indep}, we can now prove Proposition~\ref{prop_on_asymptotic_independence_main_result_chapter4}.

 \begin{proof}[Proof of Proposition~\ref{prop_on_asymptotic_independence_main_result_chapter4}]
	 We fix an arbitrary $\eps> 0$ and some $\nu \in (\tfrac{2}{3},1)$. Further, we choose a sequence $K_T \to \infty$ as $T \to \infty$ such that $K_T T^{2/3} = \Or(T^\sigma)$ with $\sigma \in (\tfrac{2}{3},\nu)$, $\sigma - \tfrac{2}{3} < \tfrac{1}{12}$ and $K_T \geq K_0$. Then, Corollary~\ref{corollary_asympt_indep_prior}  yields
	\begin{equation}
	\lim_{T \to \infty} \Pb(E_T) = 1,
	\end{equation}
	where we write $E_T$ for $E_0 \cap E_1$ depending on $K_T$.
	For \begin{equation} A^T_\eps:= \{ | \hat{X}_T^{\nu,i}(\tau) - \tilde{X}_T^{i}(\tau)| \leq \eps\ \forall \tau \in [- \varkappa, \varkappa], i \in \{0,1 \} \},\end{equation}
	Corollary~\ref{cor_functional_slow_decorrelation} gives us
 \begin{equation}
 \lim_{T \to \infty} \Pb( A^T_\eps) = 1.
 \end{equation}
	Defining $f_0(\tau) = g_T^0(\tau) - \tau^2 + s_0, f_1(\tau) = g_T^1(\tau) - \tau^2 + s_1$ for arbitrary $s_0,s_1 \in \R$, we obtain:
	\begin{equation}
\begin{aligned}
	& \lim_{T \to \infty} \Pb( \tilde{X}_T^0(\tau) \leq f_0(\tau), \tilde{X}_T^1(\tau) \leq f_1(\tau)\, \forall | \tau | \leq \varkappa) \\
	& =  \lim_{T \to \infty} \Pb( \{ \tilde{X}_T^{0}(\tau) \leq f_0(\tau), \tilde{X}_T^{1}(\tau) \leq f_1(\tau)\, \forall | \tau | \leq \varkappa \} \cap A^T_\eps) \\
	&\leq \lim_{T \to \infty} \Pb( \{ \hat{X}_T^{\nu,0}(\tau) \leq f_0(\tau) + \eps, \hat{X}_T^{\nu,1}(\tau) \leq f_1(\tau) + \eps\,\forall |\tau| \leq \varkappa \} \cap A^T_\eps) \\
	& =  \lim_{T \to \infty} \Pb( \hat{X}_T^{\nu,0}(\tau) \leq f_0(\tau) + \eps, \hat{X}_T^{\nu,1}(\tau) \leq f_1(\tau)+ \eps\,\forall |\tau| \leq \varkappa | E_T) \\
	& =  \lim_{T \to \infty} \prod_{i \in \{0,1\}} \Pb( \hat{X}_T^{\nu,i}(\tau) \leq f_i(\tau) + \eps\,\forall | \tau | \leq \varkappa |E_T ) \\
	&\leq  \lim_{T \to \infty} \Pb( \tilde{X}_T^0(\tau) \leq f_0(\tau) + 2 \eps\,\forall | \tau | \leq \varkappa) \Pb( \tilde{X}_T^1(\tau) \leq f_1(\tau) + 2 \eps\,\forall |\tau| \leq \varkappa ).
\end{aligned}
	\end{equation}
In the second inequality we exploited slow decorrelation in order to replace $(\tilde{X}_T^{i}(\tau))$ by $(\hat{X}_T^{\nu,i}(\tau))$, which is possible by adding $\eps$ to the right hand sides in the probability. Afterwards, we used that Corollary~\ref{corollary_asympt_indep} implies independence of the positions \eqref{eq_y_x_i_asympt_indep} given the event $E_T$. The last inequality follows by applying slow decorrelation again to recover the original process $(\tilde{X}_T^{i}(\tau))$.
	Analogously, we find
	\begin{equation}
\begin{aligned}
	& \lim_{T \to \infty} \Pb( \tilde{X}_T^0(\tau) \leq f_0(\tau), \tilde{X}_T^1(\tau) \leq f_1(\tau)\, \forall |\tau| \leq \varkappa ) \\
	& \geq \lim_{T \to \infty} \Pb( \tilde{X}_T^0(\tau) \leq f_0(\tau) - 2 \eps\,\forall | \tau | \leq \varkappa) \Pb( \tilde{X}_T^1(\tau) \leq f_1(\tau) - 2 \eps\, \forall |\tau| \leq \varkappa ).
\end{aligned}
	\end{equation}
	In the proof of Theorem~\ref{thm_main_result}, with help of Lemma~\ref{lemma_continuity_sup_Airy_minus_fct}, we see that the weak limit of \begin{equation}\sup_{\tau \in [-\varkappa,\varkappa]} \{ \tilde{X}_T^{i}(\tau) + \tau^2 - g_T^{i}(\tau) \} \end{equation} is a continuous random variable. Hence, we can take $\eps\to 0$ in the upper and lower bounds above and obtain the claimed asymptotic independence.
\end{proof}
With help of Proposition~\ref{prop_on_asymptotic_independence_main_result_chapter4}, we can finally prove our main convergence result.
\begin{proof}[Proof of Theorem~\ref{thm_main_result}]
	Fix some $S \in \R$. Since we have $\xi < (1-2 \sqrt{\alpha})$, Lemma~\ref{Lemma_A.2} gives \begin{equation} \lim_{T \to \infty} \Pb(x_{\alpha T}(T) > \xi T - S T^{1/3}) = 1.\end{equation} Thus, Proposition~\ref{prop_3.1_of_BBF22} implies
	\begin{equation}
\begin{aligned}
&	\lim_{T \to \infty}  \Pb( x^f_{\alpha T}(T) \geq \xi T - S T^{1/3})  \\ &= \lim_{T \to \infty} \Pb( x_{\alpha T}(t) \geq \xi T - S T^{1/3} - f(T-t)\, \forall t \in [0,T]). \label{prob_we_want_to_take_limit_of}
\end{aligned}
	\end{equation}
	We can replace \qq{$>$} by \qq{$\geq$} here since the asymptotic behaviour remains the same.
	Given Assumption~\ref{assumption_for_main_result}, Lemma~\ref{proof_claim_in_pf_of_main_thm} states that the only relevant time regions are neighbourhoods of $\alpha_i T, i \in \{0,1\}$. There, we consider the rescaled tagged particle positions $(\tilde{X}_T^i(\tau))$. For $t = \alpha_i T - \tilde{c}_2^i \tau T^{2/3}$, by Assumption~\ref{assumption_for_main_result} (b) we get
	\begin{equation}
\begin{aligned}
	&x_{\alpha T}(t) \geq \xi T - S T^{1/3} - f(T-t) \textrm{ for all } t \in [(\alpha_i - \e )T, (\alpha_i + \e )T] \\
	\Leftrightarrow \quad&\tilde{X}_T^i(\tau) + \tau^2 - g_T^i(\tau) \leq S (\tilde{c}_1^i)^{-1} \textrm{ for all } | \tau | \leq \e (\tilde{c}_2^i)^{-1} T^{1/3}.
\end{aligned}
	\end{equation}
	Let $[- \varkappa, \varkappa]$ be an interval with $\varkappa \geq 1$ fixed as $T \to \infty$. Then, for $T$ large enough it holds
\begin{equation}
	[- \varkappa, \varkappa] \subseteq [-\e (\tilde{c}_2^0)^{-1} T^{1/3}, \e (\tilde{c}_2^0)^{-1} T^{1/3}] \cap [-\e (\tilde{c}_2^1)^{-1} T^{1/3}, \e (\tilde{c}_2^1)^{-1} T^{1/3}].
	\end{equation}
	By Assumption~\ref{assumption_for_main_result} (b), the sequences $(g_T^0)$ and $(g_T^1)$ converge uniformly on $[- \varkappa, \varkappa]$ to $g_0$ and $g_1$ respectively.
	Moreover, we obtain weak convergence of $(\tilde{X}_T^{i}(\tau) + \tau^2)$ to $(\mathcal{A}_2^{i}(\tau))$ in the space of c\`adl\`ag functions on compact intervals from Lemma~\ref{corollary_4.1_of_BBF22_tightness_weak_conv_Xtilde}, where $\mathcal{A}_2^{i}$ denotes an Airy$_2$ process. Therefore, on the interval $[- \varkappa, \varkappa]$, Theorem 4.4 of~\cite{Bil68} yields $ ( \tilde{X}_T^{i}(\tau) + \tau^2, g_T^{i}(\tau)) \Rightarrow (\mathcal{A}_2^{i}(\tau), g_i(\tau))$. In general, the pointwise addition of functions in $\mathbb{D}([- \varkappa, \varkappa])$ is not a continuous mapping~\cite[p. 123, Problem 3]{Bil68}. Still, since the limits are independent and the Airy$_2$ process has continuous sample paths, addition indeed preserves convergence in this case~\cite[Section 4]{Whi80}.
	Therefore, by the continuous mapping theorem, see Theorem 5.1 of~\cite{Bil68}, it holds
\begin{equation}
\tilde{X}_T^{i}(\tau) + \tau^2 - g_T^{i}(\tau)\Rightarrow \mathcal{A}^{i}_2(\tau) - g_i(\tau)
\end{equation}
in $\mathbb{D}([- \varkappa,\varkappa])$. It further yields
	\begin{equation}
\sup_{\tau \in [- \varkappa, \varkappa]} \{\tilde{X}_T^{i}(\tau) + \tau^2 - g_T^{i}(\tau)\} \Rightarrow \sup_{\tau \in [- \varkappa, \varkappa]} \{\mathcal{A}^{i}_2(\tau) - g_i(\tau)\}.
\end{equation}
	Since $g_i$ satisfies the conditions in Lemma~\ref{lemma_continuity_sup_Airy_minus_fct}, we obtain
	\begin{equation}
\begin{aligned}
&\lim_{ T \to \infty} \Pb \Bigl ( \sup_{\tau \in [-\varkappa, \varkappa]} \{\tilde{X}_T^{i}(\tau) + \tau^2 - g_T^{i}(\tau)\} \leq S ( \tilde{c}_1^{i})^{-1} \Bigr )\\
 &= \Pb \Bigl( \sup_{\tau \in [-\varkappa, \varkappa]} \{\mathcal{A}^{i}_2(\tau) - g_i(\tau)\}\leq S ( \tilde{c}_1^{i})^{-1} \Bigr) \label{eq_5.29}
\end{aligned}
\end{equation}
 for all $S \in \R$.

 Next, we observe
	\begin{equation} \lim_{\varkappa\to\infty} \Pb \Bigl( \sup_{ \tau \in [- \varkappa, \varkappa]} \{ \mathcal{A}_2^i(\tau) - g_i(\tau)\} \leq S ( \tilde{c}_1^{i})^{-1} \Bigr) = \Pb \Bigl( \sup_{\tau \in \R} \{\mathcal{A}_2^i(\tau) - g_i(\tau)\}\leq S ( \tilde{c}_1^{i})^{-1} \Bigr). \label{conv_distr_sup_a2_over_compact_interval_to_sup_over_R}
	\end{equation}
	Indeed, the condition $g_i(\tau) \geq - M + \tfrac{\tau^2}{2}$ for some constant $M \in \R$ and Proposition~2.13 (b) of~\cite{CLW16}, see also Proposition 4.4 of~\cite{CH11}, imply
	\begin{equation}
	\Pb \Bigl( \sup_{ \tau \not \in [- \varkappa, \varkappa]} \{\mathcal{A}_2^i(\tau) - g_i(\tau)\} > S ( \tilde{c}_1^{i})^{-1} \Bigr)
	\leq  e^{-c ( {1 \over 2} \varkappa^2 + S (\tilde{c}_1^{i})^{-1} - M )^{3/2}}
	\end{equation}
	for $\varkappa$ large enough, and the right hand side converges to $0$ as $\varkappa \to \infty$. This yields (\ref{conv_distr_sup_a2_over_compact_interval_to_sup_over_R}). For representations of the probabilities as Fredholm determinants, refer to Theorem 1.19 and Equation (3.4) of~\cite{QR16} and Theorem 2 of~\cite{CQR11}.

	Now, let \begin{equation} J := [0,T] \setminus ( [\alpha_0 T - \tilde{c}_2^0 \varkappa T^{2/3}, \alpha_0 T + \tilde{c}_2^0 \varkappa T^{2/3}] \cup [\alpha_1 T - \tilde{c}_2^1 \varkappa T^{2/3}, \alpha_1 T + \tilde{c}_2^1 \varkappa T^{2/3}] ) \end{equation}
	and define
	\begin{equation}
\begin{aligned}
	&E_{T,\varkappa}^i := \{ \tilde{X}_T^i(\tau) + \tau^2 - g_T^i(\tau) \leq S (\tilde{c}_1^{i})^{-1} \forall |\tau| \leq \varkappa \}, \ i \in \{0,1\}, \\
	&A_{T,\varkappa} := \{ x_{\alpha T}(t) \geq \xi T - S T^{1/3} - f(T-t) \forall t \in J \}.
\end{aligned}
	\end{equation}
	Then, it holds
	\begin{equation}
	(\ref{prob_we_want_to_take_limit_of})
	= \lim_{T \to \infty} \Pb( E_{T, \varkappa}^0 \cap E_{T, \varkappa}^1 \cap A_{T,\varkappa} ).
	\end{equation}
	Proposition~\ref{prop_on_asymptotic_independence_main_result_chapter4}, \eqref{eq_5.29} and \eqref{conv_distr_sup_a2_over_compact_interval_to_sup_over_R} yield
	\begin{equation}
\begin{aligned}
	& \lim_{\varkappa\to\infty} \lim_{T \to \infty} \Pb(E_{T,\varkappa}^0 \cap E_{T,\varkappa}^1)  \\
	& =  \lim_{\varkappa\to\infty} \lim_{T \to \infty} \Pb(E_{T,\varkappa}^0 ) \Pb( E_{T,\varkappa}^1)  \\
	& =  \Pb\Bigl( \sup_{\tau \in \R}\{\mathcal{A}_2^0(\tau) - g_0(\tau)\} \leq S ( \tilde{c}_1^{0})^{-1}\Bigr) \Pb\Bigl( \sup_{ \tau \in \R}\{\mathcal{A}_2^1(\tau) - g_1(\tau)\} \leq S ( \tilde{c}_1^{1})^{-1}\Bigr). \label{eq_pf_main_result_product}
\end{aligned}
	\end{equation}
	As the limit distribution above is a product measure, we can choose the Airy$_2$ processes $\mathcal{A}_2^{0}, \mathcal{A}_2^{1}$ to be independent, such that the joint distribution of \mbox{$\sup_{\tau \in \R} \{\mathcal{A}_2^{0}(\tau)-g_0(\tau)\}$} and $\sup_{\tau \in \R} \{\mathcal{A}_2^{1}(\tau)-g_1(\tau)\}$ takes this form. This is of no direct significance for our result, but it is meant to emphasize the asymptotic independence required to obtain the limit distribution.

	In Lemma~\ref{proof_claim_in_pf_of_main_thm} below the proof, we establish:
	\begin{equation}
	\lim_{ \varkappa \to \infty} \lim_{T\to\infty} \Pb(A_{T,\varkappa}) = 1. \label{Claim_in_pf_of_main_thm}
	\end{equation}
	Since (\ref{prob_we_want_to_take_limit_of}) is independent of $\varkappa$, \eqref{eq_pf_main_result_product} and \eqref{Claim_in_pf_of_main_thm} give
	\begin{equation}
\begin{aligned}
	&\lim_{T\to\infty} \Pb(x_{\alpha T}^f(T) \geq \xi T - S T^{1/3}) \\
	& =  \lim_{\varkappa\to\infty} \lim_{T\to\infty} \Pb( E_{T, \varkappa}^0 \cap E_{T, \varkappa}^1 \cap A_{T,\varkappa} ) \\
	& = \Pb\Bigl( \sup_{\tau \in \R} \{\mathcal{A}_2^0(\tau) - g_0(\tau)\}\leq S( \tilde{c}_1^0)^{-1}\Bigr) \Pb\Bigl( \sup_{\tau \in \R} \{\mathcal{A}_2^1(\tau) - g_1(\tau)\} \leq S( \tilde{c}_1^1)^{-1} \Bigr).
\end{aligned}
	\end{equation}
\end{proof}

Finally, we formulate the auxiliary lemma needed in the previous proof.
\begin{lem} \label{proof_claim_in_pf_of_main_thm}
	In the setting of the proof of Theorem~\ref{thm_main_result}, it holds \begin{equation} \lim_{\varkappa \to \infty} \lim_{T\to\infty} \Pb(A_{T,\varkappa}) = 1.\end{equation}
\end{lem}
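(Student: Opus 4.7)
The plan is to partition $J$ into the ``far'' bulk region $J_{\text{far}} = \{t \in J : |t - \alpha_i T| > \varepsilon T \text{ for both } i\}$ and the ``near'' mesoscopic regions $J_{\text{near}}^i = \{t \in J : \tilde{c}_2^i \varkappa T^{2/3} < |t - \alpha_i T| \leq \varepsilon T\}$, $i \in \{0,1\}$, and to bound the failure probability of $x_{\alpha T}(t) \geq \xi T - S T^{1/3} - f(T-t)$ on each piece separately. In both pieces, the monotonicity of $t \mapsto x_{\alpha T}(t)$ (non-decreasing) and of $t \mapsto f(T-t)$ (non-increasing) allows one to reduce the supremum condition on a grid cell $[t_j, t_{j+1}]$ to the shifted grid condition $x_{\alpha T}(t_j) \geq \xi T - S T^{1/3} - f(T - t_{j+1})$, which in turn reduces everything to a union bound over a finite grid.

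On $J_{\text{far}}$, Assumption~\ref{assumption_for_main_result}~(a) together with the explicit form of $f_0$ gives $\xi T - f(T-t) \leq L(t) - K(\varepsilon) T$ for $t \geq \alpha T$, where $L(t) := t(1 - 2\sqrt{\alpha T/t})$ is the law-of-large-numbers position of $x_{\alpha T}(t)$; the case $t < \alpha T$ is handled deterministically via $x_{\alpha T}(t) \geq -\alpha T$. A failure at a grid point $t_j$ then forces $L(t_j) - x_{\alpha T}(t_j) > \tfrac14 K(\varepsilon) T$ (the $O(T^{2/3})$ shift-perturbation from substituting $t_{j+1}$ for $t_j$ in $L$ is negligible against $K(\varepsilon) T$), which by Lemma~\ref{Lemma_A.2} has probability $\leq C e^{-cT}$. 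A union bound over the $O(T^{1/3})$ grid points gives total probability tending to $0$ as $T \to \infty$, uniformly in $\varkappa$.

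On $J_{\text{near}}^i$, parametrize $t = \alpha_i T - \tilde{c}_2^i \tau T^{2/3}$ with $|\tau| \in (\varkappa, \varepsilon (\tilde{c}_2^i)^{-1} T^{1/3}]$. By Assumption~\ref{assumption_for_main_result}~(b) together with $g_T^i(\tau) \geq -M + \tfrac12 \tau^2$, failure is contained in $\{\tilde{X}_T^i(\tau) + \tfrac12 \tau^2 > S(\tilde{c}_1^i)^{-1} - M\}$. Split the $\tau$-range at a fixed large $\varkappa'$: on the bounded part $|\tau| \in (\varkappa, \varkappa']$, Lemma~\ref{corollary_4.1_of_BBF22_tightness_weak_conv_Xtilde} and continuity of the supremum functional produce a limiting probability, bounded through the Airy$_2$ tail estimate (Proposition~2.13(b) of~\cite{CLW16}) by $C e^{-c \varkappa^3}$. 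On the tail part $|\tau| > \varkappa'$, expanding $L(t) = \tilde{\mu}^i(\tau, T) + \tilde{c}_1^i \tau^2 T^{1/3} + R(\tau, T)$ with $|R(\tau, T)| \leq C \varepsilon \, \tilde{c}_1^i \tau^2 T^{1/3}$ (the cubic correction has size $O(|\tau|^3)$, dominated by $C \varepsilon \tilde{c}_1^i \tau^2 T^{1/3}$ in this range when $\varepsilon$ is taken small), failure forces $L(t) - x_{\alpha T}(t) \geq \tfrac{\tilde{c}_1^i}{8} \tau^2 T^{1/3}$, contributing $\leq C e^{-c |\tau|^3}$ per grid point by Lemma~\ref{Lemma_A.2}. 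Summing over a grid of $\tau$-values yields $\leq C e^{-c (\varkappa')^3}$ uniformly in $T$, and taking limits first $T \to \infty$, then $\varkappa' \to \infty$, then $\varkappa \to \infty$ completes the proof.

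The main obstacle is controlling the tail range $|\tau| \in (\varkappa', \varepsilon(\tilde{c}_2^i)^{-1} T^{1/3}]$ uniformly in $T$, where the $O(T^{2/3})$ shift-perturbation from the grid reduction can be comparable to the quadratic threshold $\tau^2 T^{1/3}$ unless $\varkappa'$ is taken large. Choosing $\varepsilon$ sufficiently small (which is permitted since $\varepsilon$ is a free parameter in Assumption~\ref{assumption_for_main_result}~(a)) and $\varkappa'$ large enough ensures that both the cubic correction $R$ and the shift perturbation are dominated by a fixed fraction of the quadratic term, reducing the problem to the standard lower-tail KPZ estimate, whose uniformity over parameters in compact sets is guaranteed by Lemma~\ref{Lemma_A.2}.
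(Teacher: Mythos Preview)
Your overall decomposition into $J_{\text{far}}$ and $J_{\text{near}}^i$ matches the paper's outline, and your treatment of $J_{\text{far}}$ and of the bounded part $|\tau|\in(\varkappa,\varkappa']$ of $J_{\text{near}}^i$ is fine. However, there is a genuine gap in your handling of the ``tail part'' $|\tau|\in(\varkappa',\varepsilon(\tilde c_2^i)^{-1}T^{1/3}]$.

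The monotonicity/grid reduction you invoke replaces $f(T-t)$ by $f(T-t_{j+1})$, which shifts $\tilde\mu^i(\tau,T)$ by an amount of order $\Delta\tau\cdot T^{2/3}$, where $\Delta\tau$ is the grid spacing in $\tau$. For this to be absorbed into the quadratic threshold $\tau^2 T^{1/3}$ one needs $\Delta\tau\ll \tau^2 T^{-1/3}$; at the lower end $\tau\approx\varkappa'$ this forces $\Delta\tau\ll(\varkappa')^2 T^{-1/3}$, hence $\gtrsim T^{1/3}/(\varkappa')^2$ grid points near $\varkappa'$. The union bound then picks up a factor $T^{1/3}$ that is \emph{not} killed by the one-point tail from Lemma~\ref{Lemma_A.2} (which only gives $e^{-c\tau^2}$, not the $e^{-c\tau^3}$ you wrote), so the resulting estimate diverges as $T\to\infty$ for any fixed $\varkappa'$. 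Your proposed fix ``take $\varkappa'$ large enough'' would require $\varkappa'\gg T^{1/6}$, but then the weak-convergence argument (Lemma~\ref{corollary_4.1_of_BBF22_tightness_weak_conv_Xtilde}) you use on the bounded part $(\varkappa,\varkappa']$ is no longer available, since that lemma applies only to fixed compact intervals.

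This is exactly why the paper's sketch introduces an intermediate mesoscopic scale $T^{2/3+\sigma}$ with $\sigma\in(0,\tfrac16)$: for $|\tau|\geq T^\sigma$ the one-point grid bound via Lemma~\ref{Lemma_A.2} does work (the threshold $\tau^2 T^{1/3}\geq T^{2\sigma+1/3}$ now dominates the $O(T^{2/3})$ shift), while for $|\tau|\in(\varkappa,T^\sigma)$ the paper invokes the comparison to stationary TASEP, Proposition~\ref{prop_comparison_increments_of_particles}, which controls the whole trajectory $t\mapsto x_{\alpha T}(t)$ on that window uniformly rather than only at grid points. This stationary comparison is the missing ingredient in your argument; without it (or some substitute giving uniform-in-$t$ control on a window of width $\gg T^{2/3}$), the tail-part bound cannot be made uniform in $T$.
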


\begin{proof}
	As this convergence is obtained analogously in the case of one wall influence in~\cite{BBF21}, we do not repeat the details here.
	
	The idea is to split the interval $J$ up into further subintervals. For the intervals $[0,\alpha T]$ and $[\alpha T, (\alpha + \delta)T]$ with some small $\delta > 0$, one utilizes Assumption~\ref{assumption_for_main_result} (a) and the fact that it holds $x_{\alpha T}(t) \geq - \alpha T$ for all times $t \geq 0$. For the remaining times outside of $\Or(T^{2/3+\sigma})-$neighbourhoods of $\alpha_0 T$ and $\alpha_1 T$, where $\sigma \in (0, \tfrac{1}{6})$, one can argue by Assumption~\ref{assumption_for_main_result} (a) and (b) as well as Lemma~\ref{Lemma_A.2}. Finally, inside of the $\Or(T^{2/3+\sigma})-$neighbourhoods, one utilizes Assumption~\ref{assumption_for_main_result} (b) and the comparison to stationary TASEP by Proposition~\ref{prop_comparison_increments_of_particles}. We refer to Lemma 4.11, Lemma 4.12, Lemma 4.13 and Lemma 4.14 of~\cite{BBF21}. \\
\end{proof}

\appendix

\section{Estimates for TASEP with step initial condition and stationary TASEP} \label{appendix_estimates}

Below, we shortly list some results pertaining to the distributions of tagged particle positions in TASEP with step initial condition and stationary TASEP.

\begin{lem}[Lemma A.2 of~\cite{BBF21}] \label{Lemma_A.2}
	Let $x(t)$ be a TASEP with step initial condition and $\alpha \in (0,1)$. Then, it holds
	\begin{equation}
	\lim_{T \to \infty} \Pb( x_{\alpha T}(T) \geq (1-2 \sqrt{\alpha})T - s c_1(\alpha) T^{1/3}) = F_{2}(s), \label{one_point_limit_GUE}
	\end{equation} where we define $c_1(\alpha) = \tfrac{(1-\sqrt{\alpha})^{2/3}}{\alpha^{1/6}}$.
	
	In addition, the following estimates on the lower and the upper tail can be derived:
	uniformly for all large times $T$ and for $\alpha$ in a closed subset of $(0,1)$, there exist constants $C,c > 0$ such that for all $T$ large enough, it holds
	\begin{equation}
	\Pb( x_{\alpha T}(T) \leq (1-2\sqrt{\alpha})T - s c_1(\alpha) T^{1/3}) \leq C e^{-c s} \ \ \textrm{for} \ s > 0 \label{firsttail}
	\end{equation}
	and
	\begin{equation}
	\Pb(x_{\alpha T}(T) \geq (1-2 \sqrt{\alpha})T + s c_1(\alpha) T^{1/3}) \leq C e^{-c s^{3/2}} \ \ \textrm{for} \ 0 < s = o(T^{2/3}). \label{secondtail}
	\end{equation}
\end{lem}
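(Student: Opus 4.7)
The plan is to transfer the problem to exponential last-passage percolation (LPP) via the standard identity $\Pb(x_N(T) \geq k - N + 1) = \Pb(L(k, N) \leq T)$, where $L(k, N) = \max_\pi \sum_{(i,j) \in \pi} w_{ij}$ is the last-passage time across up-right paths $\pi$ from $(1,1)$ to $(k, N)$ with iid $\mathrm{Exp}(1)$ weights. Under the parametrization $N = \alpha T$ and $k - N + 1 = (1 - 2\sqrt{\alpha})T - s\, c_1(\alpha)\, T^{1/3}$, a direct computation shows that the typical value of $L(k, N)$ is at $T + O(T^{1/3})$, so the statement becomes a fluctuation and tail statement for $L(k, N)$ near its typical value, to which the Fredholm determinant machinery applies.

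For the convergence in distribution \eqref{one_point_limit_GUE}, I would invoke Johansson's Fredholm determinant formula $\Pb(L(k, N) \leq T) = \det(I - K_{k,N,T})_{L^2(\Gamma)}$ with its explicit double-contour integral kernel, and run steepest descent. At $s = 0$ the exponent in the integrand has a double critical point $w^*(\alpha)$ determined by $\alpha$; deforming to steepest descent contours through $w^*$ and zooming in by $T^{1/3}$ around it, the kernel converges pointwise to the Airy kernel, with uniform decay estimates along the tails of the contour. A standard Fredholm continuity argument then upgrades this to convergence of the determinant to $F_{2}(s)$, which is essentially Johansson (2000).

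For the tail bounds \eqref{firsttail}--\eqref{secondtail}, I would extract uniform trace estimates on the kernel along the same steepest descent contours. The lower tail of $L$ (corresponding to the upper tail \eqref{secondtail} of the particle position) inherits the cubic-exponential decay of the integrand along the saddle; through the inequality $\det(I - K) \geq 1 - \|K\|_1$, this yields $Ce^{-cs^{3/2}}$ in the range $s = o(T^{2/3})$, precisely the regime where the local Airy approximation around $w^*(\alpha)$ remains accurate. The upper tail of $L$ (corresponding to \eqref{firsttail}, small particle position) is bounded more crudely: either from the same Fredholm representation with looser trace bounds, or more directly by a Chernoff argument bounding $L$ above by a sum of iid $\mathrm{Exp}(1)$ weights along a specific up-right path and using their exponential moments, giving the weaker but uniform $Ce^{-cs}$.

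The main obstacle is keeping the constants uniform in $\alpha$ on a compact subset of $(0,1)$ and in $T, s$. This is delivered by the smooth dependence of the critical point $w^*(\alpha)$, the steepest descent geometry and the local Airy scaling on $\alpha$, which remain regular as long as $\alpha$ stays bounded away from $0$ and $1$. The cap $s = o(T^{2/3})$ in \eqref{secondtail} marks the edge of the Tracy-Widom window where the local approximation is valid; beyond it one would need moderate or large deviation asymptotics, which are not required here.
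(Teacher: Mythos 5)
The paper does not prove this lemma itself: it is quoted from~\cite{BBF21}, and the limit \eqref{one_point_limit_GUE} is exactly Johansson's Theorem 1.6 of~\cite{Jo00b}, obtained via the mapping to exponential LPP and Fredholm determinant/steepest descent analysis. Your overall strategy (the identity $\Pb(x_N(T)\geq k-N+1)=\Pb(L(k,N)\leq T)$, Airy asymptotics of the kernel, uniformity from the smooth dependence of the critical point on $\alpha$) is therefore the standard and intended route, and the pairing of tails is correct: \eqref{firsttail} is the upper tail of $L$, \eqref{secondtail} is the lower tail of $L$.

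However, the argument you give for \eqref{secondtail} has a genuine gap. There the quantity to be bounded is $\Pb(L(k',N)\leq T)=\det(I-K)$ itself, i.e.\ you need an \emph{upper} bound on the Fredholm determinant showing it is small, whereas the inequality $\det(I-K)\geq 1-\|K\|_1$ only bounds it from \emph{below} and can never produce the decay $Ce^{-cs^{3/2}}$. The standard repair is to use that the relevant correlation kernel restricted to $(T,\infty)$ satisfies $0\leq K\leq I$, so that $\det(I-K)\leq e^{-\Tr K}$, and then to prove the trace lower bound $\Tr K\geq c\,s^{3/2}$ (the expected number of points of the determinantal process above $T$ when $T$ sits $\sim s T^{1/3}$ below the edge), which is precisely where the exponent $3/2$ comes from. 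Trace-norm bounds in the other direction, $1-\det(I-K)\leq \|K\|_1$, are the right tool for \eqref{firsttail}, and that part of your proposal is fine; but your alternative "more direct" argument for \eqref{firsttail} does not work as stated, since a specific up-right path gives a \emph{lower} bound on $L$, not an upper bound, and even a Chernoff bound combined with a union over all $\binom{k+N}{N}$ paths is defeated by the entropy term at the $s\,T^{1/3}$ fluctuation scale with $s$ of order one; it only controls macroscopic deviations. So the proposal is essentially the right approach for \eqref{one_point_limit_GUE} and \eqref{firsttail}, but the proof of \eqref{secondtail} needs the $\det(I-K)\leq e^{-\Tr K}$ mechanism (or an equivalent upper bound on the determinant) rather than the inequality you invoke.
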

Considering the proof of \eqref{one_point_limit_GUE}, that is Theorem 1.6 of~\cite{Jo00b}, we notice that \eqref{one_point_limit_GUE} still holds true when $\alpha$ is replaced by a converging sequence $\alpha_T \to \alpha$: then, \begin{equation}\lim_{T \to \infty} \Pb( x_{\alpha_T T}(T) \geq (1-2 \sqrt{\alpha_T})T - s c_1(\alpha) T^{1/3}) = F_{2}(s).\label{eq_Lemma_A.2_converging_alpha} \end{equation}
The next result provides information on the location of backwards paths at time $0$.
\begin{lem}[Lemma 2.4 of~\cite{BBF21}] \label{Lemma 2.8} Let $x(t)$ be a TASEP with step initial condition, $N = \alpha T$ with $\alpha \in (0,1)$ and $t = T - \varkappa T^{2/3}$ with $\varkappa$ in a bounded subset of $\R$. Then, there exist constants $C, c > 0$, uniformly for $\alpha$ in a closed subset of $(0,1)$ and for all sufficiently large times $T$, such that \begin{equation}
\Pb( | x_{N(t \downarrow 0)}(0) | \geq K_1 T^{1/3} ) \leq C e^{-c K_1} \textrm{ for all } K_1 > 0 \ \textrm{and for } T \ \textrm{large enough}.
\end{equation}
\end{lem}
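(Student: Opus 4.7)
The plan is to reduce the estimate to one-point tail bounds for a tagged particle in step-initial-condition TASEP, applied through the min-formula representation of the backwards index.

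Because the initial configuration satisfies $x_n(0) = -n+1$ for all $n \geq 1$, the quantity $|x_{N(t\downarrow 0)}(0)|$ equals $N(t\downarrow 0) - 1$, so the claim is equivalent to
\[
\Pb\bigl(N(t\downarrow 0) \geq K_1 T^{1/3} + 1\bigr) \leq C e^{-c K_1}.
\]
For $K_1$ bounded by any fixed constant the inequality is immediate upon enlarging $C$, and for $K_1 \gtrsim T^{2/3}$ the event is empty since $N(t\downarrow 0) \leq N = \alpha T$; only the intermediate range requires analysis. On the event $\{N(t\downarrow 0) = n\}$, Proposition~\ref{Prop_3.4_[Fer18]_basic_coupling} taken at $\tau = 0$ gives the pathwise identity $x_N(t) = -n + 1 + y^{-n+1}_{N-n+1}(0,t)$, so
\[
\bigl\{N(t\downarrow 0) \geq k\bigr\} \subseteq \bigcup_{n=k}^{N}\bigl\{y^{-n+1}_{N-n+1}(0,t) \leq x_N(t) + n - 1\bigr\}.
\]
Since $y^{-n+1}_{N-n+1}(0,t) \overset{(d)}{=} x^{\textup{step}}_{N-n+1}(t)$ by translation invariance, each term in the union is amenable to Lemma~\ref{Lemma_A.2}.

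A Taylor expansion of the law-of-large-numbers value
\[
\Bigl(1 - 2\sqrt{\tfrac{N-n+1}{t}}\Bigr) t = (1 - 2\sqrt{\alpha})t + \tfrac{n}{\sqrt{\alpha}} + \Or\!\bigl(\tfrac{n^2}{T}\bigr)
\]
shows that the gap between the LLN of $y^{-n+1}_{N-n+1}(0,t)$ and that of $x_N(t) + n - 1$ is $n\bigl(\tfrac{1}{\sqrt{\alpha}} - 1\bigr) + \Or(n^2/T) \gtrsim n$ for $\alpha$ in any closed subset of $(0,1)$. Splitting the deviation at half the gap, the lower-tail bound~\eqref{firsttail} applied to $y^{-n+1}_{N-n+1}(0,t)$ together with the upper-tail bound~\eqref{secondtail} applied to $x_N(t)$ yields a per-term estimate of the form
\[
\Pb\!\bigl(y^{-n+1}_{N-n+1}(0,t) \leq x_N(t) + n - 1\bigr) \leq C \exp\!\bigl(-c\, n\, T^{-1/3}\bigr)
\]
throughout the linear regime $n \lesssim T^{2/3}$, and an even faster Gaussian-type bound $C \exp\!\bigl(-c\, n^2/T^{4/3}\bigr)$ once the quadratic correction dominates.

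Summing geometrically,
\[
\sum_{n \geq K_1 T^{1/3} + 1} C \exp\!\bigl(-c\, n\, T^{-1/3}\bigr) \leq C' T^{1/3}\, e^{-c K_1},
\]
and the Gaussian-regime tail contributes only a uniformly bounded constant. The main obstacle is the surplus $T^{1/3}$ prefactor: it is absorbed by a case split. For $K_1$ above a logarithmic-in-$T$ threshold, shrinking $c$ to $c/2$ converts $T^{1/3} e^{-c K_1}$ into $e^{-(c/2) K_1}$; below that threshold the target $C e^{-c K_1}$ is already bounded below by a positive constant and the bound is trivial upon enlarging $C$, using $\Pb(\cdot) \leq 1$. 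Uniformity of the resulting constants in $\alpha$ over closed subsets of $(0,1)$ is inherited from the corresponding uniformity of the constants in Lemma~\ref{Lemma_A.2}, since $c_1(\alpha)$ is bounded away from $0$ and $\infty$ and the gap $1/\sqrt{\alpha}-1$ is bounded away from $0$ on such subsets.
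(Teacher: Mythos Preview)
The paper does not give its own proof of this lemma; it is quoted from~\cite{BBF21}. Your argument, however, contains a genuine gap in the final step.

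The union bound over $n\geq K_1T^{1/3}+1$ leaves a polynomial-in-$T$ prefactor: summing $Ce^{-cn/T^{1/3}}$ over the linear regime gives $C'T^{1/3}e^{-cK_1}$, and your ``Gaussian regime'' tail $\sum_{n\gtrsim T^{2/3}}e^{-cn^2/T^{4/3}}$ is of order $T^{2/3}$, not a uniformly bounded constant as you claim. Your absorption of the prefactor is incorrect. You assert that for $K_1$ below the logarithmic-in-$T$ threshold ``the target $Ce^{-cK_1}$ is already bounded below by a positive constant'', but for $K_1$ ranging over $[1,\tfrac{2}{3c}\ln T]$ the target $Ce^{-cK_1}$ runs down to $CT^{-2/3}$, so the trivial bound $\Pb(\cdot)\leq 1$ does not suffice. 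No choice of constants $C,c$ that is uniform in $T$ can close this gap.

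The remedy is to avoid the union bound altogether. Under basic coupling the min-formula~\eqref{estimate_Sep98_backwards_paths}, applied to the step TASEP with rightmost particle at $-k+1$, gives
\[
\min_{k\leq n\leq N}\bigl\{-n+1+y^{-n+1}_{N-n+1}(0,t)\bigr\}=x^{\textrm{step},-k+1}_{N-k+1}(0,t),
\]
so your union of events is in fact the \emph{single} event $\{x^{\textrm{step},-k+1}_{N-k+1}(0,t)\leq x_N(t)\}$. Splitting this once at the midpoint of the two laws of large numbers---using precisely the gap $k\bigl(\tfrac{1}{\sqrt{\alpha}}-1\bigr)+\Or(k^2/T)$ that you already computed---and applying the lower and upper tails of Lemma~\ref{Lemma_A.2} one time each yields $Ce^{-cK_1}$ directly, with no prefactor to absorb.
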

 Apart from TASEP with step initial condition, we also require some knowledge about the stationary TASEP.
\begin{lem}[Lemma A.3 of~\cite{BBF21}] \label{Lemma A.3} Let $x^\rho(t)$ denote a stationary TASEP with density $\rho \in (0,1)$ and set $N = \rho^2 T - 2 w \rho \chi^{1/3} T^{2/3}$ with $\chi = \rho(1-\rho)$. Then, it holds
	\begin{equation}
	\lim_{T\to\infty} \Pb( x_N^\rho(T) \geq (1-2\rho)T + 2 w \chi^{1/3} T^{2/3} - (1-\rho) \chi^{-1} s T^{1/3}) = F_{{\rm BR},w}(s), \label{one-point-limit_BR}
	\end{equation}
	where $F_{\textrm{BR},w}$ denotes the Baik-Rains distribution function with parameter $w \in \R$.
	In addition, uniformly for all sufficiently large times $T$ and for $\rho$ in a closed subset of $(0,1)$, there exist constants $C, c > 0$ such that for $T$ large enough, it holds
	\begin{equation}
	\label{thirdtail} \Pb(x^\rho_N(T) \leq (1 - 2 \rho)T + 2 w \chi^{1/3} T^{2/3} - (1-\rho) \chi^{-1} s T^{1/3}) \leq C e^{-cs}
	\end{equation}
	for $s > 0$ and
	\begin{equation}
	\label{forthtail} \Pb( x^\rho_N(T) \geq (1-2\rho)T + 2 w \chi^{1/3} T^{2/3} + (1-\rho) \chi^{-1} s T^{1/3}) \leq C e^{-c s^{3/2}}
	\end{equation}
	for $ 0 < s = o(T^{2/3})$.
\end{lem}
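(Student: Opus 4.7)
The plan is to transfer the problem to last passage percolation (LPP) with boundary sources, under which the stationary TASEP with density $\rho$ corresponds to LPP in the quadrant with i.i.d.\ rate-$1$ exponential bulk weights and two independent boundary rows with rates $\rho$ and $1-\rho$ (or, equivalently, a geometric analogue). Under this bijection the event $\{x_N^\rho(T)\ge m\}$ becomes $\{L(a,b)\le T\}$ for $(a,b)$ linear in $(N,m)$, and the whole lemma reduces to asymptotics and tail estimates for the stationary LPP time $L$.

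The one-point limit \eqref{one-point-limit_BR} is then the Baik--Rains limit for stationary LPP. Inserting $N=\rho^2T-2w\rho\chi^{1/3}T^{2/3}$ places the sampled point at a distance $\sim w T^{2/3}$ off the characteristic through the origin, and the $T^{1/3}$-rescaled LPP time converges to $F_{{\rm BR},w}$ with the KPZ scaling prefactor $(1-\rho)\chi^{-1}$. This is a standard Riemann--Hilbert / steepest descent analysis of the Fredholm determinant representation of $\Pb(L\le T)$; the only thing to verify is that substituting the stated centering and scaling into the known limit produces precisely the parameters displayed in \eqref{one-point-limit_BR}.

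For the upper-tail bound \eqref{forthtail} of type $e^{-cs^{3/2}}$, my plan is to dominate $x_N^\rho(T)$ from above by a particle in an auxiliary TASEP with step initial condition up to a deterministic shift of order $T^{2/3}$. This is done by coupling the Bernoulli$(\rho)$ initial configuration to a block of particles near the origin and invoking basic coupling, which preserves the relative labelling. The right-tail estimate \eqref{secondtail} of Lemma~\ref{Lemma_A.2} then delivers $e^{-cs^{3/2}}$, after a straightforward Bernoulli concentration bound absorbs the random shift produced by the initial configuration.

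The lower-tail bound \eqref{thirdtail} is the main obstacle, because the exponent $1$ (rather than $3/2$) is a genuine signature of the Baik--Rains boundary sources and cannot be recovered from step-IC estimates: the left tail of $F_2$ decays like $e^{-cs^3}$, which goes in the wrong direction for an upper bound. I would instead use the finite-$T$ decomposition of the stationary LPP time as (essentially) the maximum of a bulk Tracy--Widom piece and a boundary piece built from the sum of $\sim sT^{1/3}$ geometric boundary weights, and bound each tail separately: the bulk contribution to $\{x_N^\rho(T)\text{ too far left}\}$ already decays as $e^{-cs^{3/2}}$, while the boundary contribution is a standard exponential large-deviation of a sum of $\Or(sT^{1/3})$ i.i.d.\ geometrics, giving exactly $e^{-cs}$. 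Uniformity of the constants $C,c>0$ for $\rho$ in a closed subset of $(0,1)$ and for $T$ large --- needed for the application in Proposition~\ref{prop_on_particle_distances_stationary}, where the effective density drifts as $T^{-1/3}$ --- follows from the uniform analyticity of the saddle points and contours in the steepest descent once $\rho$ is bounded away from $0$ and $1$, together with the uniformity of the Poisson and Bernoulli concentration bounds used in the two tail arguments.
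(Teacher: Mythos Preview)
The paper does not give its own proof of this lemma: it is quoted in the appendix as Lemma~A.3 of~\cite{BBF21} and simply cited as a known input, with no argument reproduced. So there is nothing in the present paper to compare your proposal against.

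That said, your outline is broadly the standard route by which such statements are established in the literature: pass to stationary LPP with boundary sources, read off the Baik--Rains limit for \eqref{one-point-limit_BR}, and handle the two tails separately. Two small comments on the tails. For \eqref{forthtail}, the domination of $x_N^\rho(T)$ by a step-IC particle works, but the random shift coming from the Bernoulli initial data has fluctuations of order $N^{1/2}\sim T^{1/2}$, not $T^{2/3}$; this is harmless since $T^{1/2}=o(T^{2/3})$, but your statement of the order should be adjusted. For \eqref{thirdtail}, your bulk/boundary decomposition is the right mechanism for the exponent $1$; in practice this is usually phrased via the exit-point representation of stationary LPP (the maximum over the boundary entry point), where the boundary piece is exactly a sum of i.i.d.\ exponentials whose large-deviation rate gives the $e^{-cs}$.
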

Lastly, we need to understand the asymptotic behaviour of the increment $x_N^{\rho}(T) - x^{\rho}_{N(T \downarrow 0)}(0)$:
\begin{lem}[Lemma 2.5 of~\cite{BBF21}] \label{Lemma 2.9} Let $x^\rho(t)$ denote a stationary TASEP with density $\rho \in (0,1)$. Then, uniformly in $T$ and for $\rho$ in a closed subset of $(0,1)$, there exist constants $C,c > 0$ such that for all $K > 0$ and any $N \in \mathbb{Z}$, it holds
	\begin{equation}
	\Pb( | x^\rho_N(T) - x^\rho_{N(T \downarrow 0)}(0) - (1-2\rho)T| \geq K T^{2/3}) \leq C e^{-c K}
	\end{equation}
	for all $T$ large enough.
\end{lem}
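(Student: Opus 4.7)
The plan is to reduce the increment $x^\rho_N(T) - x^\rho_{N(T\downarrow 0)}(0)$ to a tagged-particle position in a step-initial-condition TASEP, and then apply the one-point tail bounds of Lemma~\ref{Lemma_A.2}. The concatenation identity in Proposition~\ref{Prop_3.4_[Fer18]_basic_coupling}, applied at $\tau = 0$, gives
\begin{equation}
x^\rho_N(T) - x^\rho_{N(T\downarrow 0)}(0) = y^{x^\rho_{N(T\downarrow 0)}(0)}_{N - N(T\downarrow 0)+1}(0,T).
\end{equation}
For any deterministic $n, k$, the Poisson events driving $y^{x^\rho_n(0)}_\cdot(0,T)$ are independent of the initial Bernoulli configuration determining $x^\rho_n(0)$, and the displacement $y^{x^\rho_n(0)}_k(0,T) \stackrel{(d)}{=} x^{{\rm step}}_k(T)$.

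Next I would prove the stationary analogue of Lemma~\ref{Lemma 2.8},
\begin{equation}
\Pb\bigl(|N - N(T\downarrow 0) - \rho^2 T| \geq K T^{2/3}\bigr) \leq C e^{-cK}.
\end{equation}
By translation invariance of the stationary measure under joint shifts of space and labels, I may assume $N = \lfloor \rho^2 T \rfloor$. Since the argmin in the min formula \eqref{estimate_Sep98_backwards_paths} equals $N(T\downarrow 0)$, any deviation of $N(T\downarrow 0)$ by more than $KT^{2/3}$ from its mean forces some deterministic label inside the expected window to attain a strictly smaller value in the min formula, an event controlled by Lemma~\ref{Lemma A.3} combined with exponential Chebyshev bounds on the initial positions $x^\rho_n(0) \approx -n/\rho$. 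Granted this concentration, on the event that $\tilde n := N - N(T\downarrow 0) + 1$ lies in a $KT^{2/3}$-window around $\rho^2 T$, the Taylor expansion
\begin{equation}
\Bigl(1 - 2\sqrt{\tilde n / T}\Bigr) T = (1-2\rho) T - \tfrac{1}{\rho}(\tilde n - \rho^2 T) + O\bigl((\tilde n - \rho^2 T)^2 / T\bigr)
\end{equation}
shows that the step-TASEP centering deviates from $(1-2\rho)T$ by at most $O(KT^{2/3})$ (the quadratic term being harmless since we may assume $K = O(T^{1/3})$; for larger $K$ the target inequality is deterministic). Lemma~\ref{Lemma_A.2} then controls the remaining $T^{1/3}$-order fluctuations uniformly over the $O(KT^{2/3})$ integer values of $\tilde n$ via a union bound, whose enumeration cost is absorbed by the exponential decay.

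The main obstacle is the stationary backward-index concentration. In Lemma~\ref{Lemma 2.8} the deterministic identity $x^{{\rm step}}_n(0) = 1-n$ reduces that question to a single one-point fluctuation problem. In the stationary setting $x^\rho_n(0)$ is itself random with $\sqrt{n}$-scale Gaussian fluctuations, which a priori dwarf the target $T^{2/3}$-scale; the key point is that these initial fluctuations are compensated by the randomness the backward index picks up from the Poisson events on $[0,T]$, and making this cancellation explicit---via a careful combination of Baik-Rains tails (Lemma~\ref{Lemma A.3}) with Bernoulli concentration---is the technical core of the argument.
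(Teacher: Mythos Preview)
The paper does not prove this lemma; it is quoted from \cite{BBF21} (their Lemma~2.5) and listed in Appendix~\ref{appendix_estimates} without argument. So there is no in-paper proof to compare against.

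Your overall strategy---reduce the increment to a step-TASEP displacement via Proposition~\ref{Prop_3.4_[Fer18]_basic_coupling}, concentrate the backward index $N-N(T\downarrow 0)$ around $\rho^2 T$, then union-bound over the window using Lemma~\ref{Lemma_A.2}---is sound and is essentially how such statements are proven. The union-bound step is fine: a deviation of order $KT^{2/3}$ from the step mean corresponds to $s\sim KT^{1/3}$ in the scaling of Lemma~\ref{Lemma_A.2}, so each term decays like $e^{-cKT^{1/3}}$, which swallows the $O(KT^{2/3})$ enumeration cost; and the decomposition $\Pb(Y_M\text{ deviates}, M\in W)\le \sum_{m\in W}\Pb(Y_m\text{ deviates})$ is legitimate despite the correlation between $M$ and the $Y_m$.

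The gap is in the concentration step. Your one-sentence sketch is inverted: if the argmin $N(T\downarrow 0)$ lies outside the window, then every label \emph{inside} the window attains a strictly \emph{larger} value in the min formula, not a smaller one. The actual argument compares the value at the argmin (which equals $x^\rho_N(T)$, controlled by Lemma~\ref{Lemma A.3}) to the value at the window center $n_0=N-\rho^2 T$. The key is that the map $n\mapsto x^\rho_n(0)+(1-2\sqrt{(N-n)/T})T$ has a \emph{parabolic} profile near $n_0$: the linear parts cancel exactly (this is the ``compensation'' you allude to), leaving a gap of order $(n-n_0)^2/T\sim K^2 T^{1/3}$ between center and a point at distance $KT^{2/3}$. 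This gap has to be overcome by fluctuations of order $T^{1/3}$ (step and stationary one-point) plus $\sqrt{KT^{2/3}}=\sqrt{K}T^{1/3}$ (Bernoulli increments of the initial data), which is exponentially unlikely. Making this precise requires a supremum bound over $n$ outside the window, not just a pointwise estimate, and is where most of the work lies. Also, your remark that the $\sqrt{n}$-scale fluctuations of $x^\rho_n(0)$ ``dwarf the target $T^{2/3}$-scale'' is off: for $n\sim T$ these are of order $\sqrt{T}\ll T^{2/3}$; the genuine subtlety is the linear cancellation in the mean, not the size of the Gaussian fluctuations.
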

\begin{remark} \label{rem_possible_extensions_auxiliary_lemmata}
	In order to obtain Proposition~\ref{prop_comparison_increments_of_particles} and Proposition~\ref{prop_on_particle_distances_stationary} as stated, we observe that the following extensions of the previous estimates are admissible:
	\begin{itemize}
		\item The bound in Lemma~\ref{Lemma 2.8} also holds for $K_1 = o(T^{1/3})$, and with uniform constants for all $K_1 = \Or(T^\sigma)$ with $\sigma \in (0, \tfrac{1}{3})$. Indeed, one can also allow $\varkappa = \Or(T^\sigma)$.
		\item The estimates \eqref{thirdtail} and \eqref{forthtail} in Lemma~\ref{Lemma A.3} also hold with uniform constants for $w = o(T^{1/3})$ if we impose the additional constraint $s \geq \tilde{s}$ with $\tilde{s} \gg w^2$ as $T \to \infty$. We refer to Remark 3.11 of~\cite{Ger23}.
		\item In Lemma~\ref{Lemma 2.9}, we can also allow $K = o(T^{1/3})$. To obtain uniform constants for different values of $K$, we demand $K = \Or(T^\sigma)$ with $\sigma \in (0, \tfrac{1}{3})$.
	\end{itemize}
\end{remark}


\end{document}